\numberwithin{equation}{section}
\newtheorem{proposition}{Proposition}[section]
\theoremstyle{definition}
\newtheorem{theorem}{Theorem}[section]%
\newtheorem{corollary}[proposition]{Corollary}
\newtheorem{lemma}[proposition]{Lemma}
\newtheorem{definition}{Definition}
\theoremstyle{remark}
\newtheorem{remark}{Remark}
\newcommand{\mL}{\mathcal L}
\newcommand{\mF}{\mathcal {F}}
\newcommand{\mP}{\mathcal {P}}
\newcommand{\vp}{\varphi}
\newcommand{\bpsi}{\bar\psi}
\newcommand{\bvp}{\bar\varphi}
\newcommand{\bm}{\tilde m}
\newcommand{\tpsi}{\tilde\psi}
\newcommand{\hD}{\hat D}
\newcommand{\hF}{\hat F}
\newcommand{\hR}{\hat R}
\newcommand{\Dom}{\Delta_\omega}
\newcommand{\bl}{\tilde\lambda}
\newcommand{\tom}{\tau^\omega}
\newcommand{\hDom}{\hat\Delta_\omega}
\newcommand{\Domo}{\Delta_{\omega, 0}}
\newcommand{\spltp}{\lfloor n^c\rfloor}
\newcommand{\N}{\mathbb{N}}
\newcommand{\p}{\mathbb{P}}
\newcommand{\Z}{\mathbb{Z}}
\newcommand{\amin}{{\underline \alpha}_0}
\newcommand{\amax}{{\underline \alpha}_1}
\newcommand{\eps}{\varepsilon}
\begin{document}
\title[Quenched decay of correlations]{Quenched decay of correlations for slowly mixing systems}
\author{Wael Bahsoun$^{\dagger}$}
\address{Department of Mathematical Sciences, Loughborough University,
Loughborough, Leicestershire, LE11 3TU, UK}
\email{$\dagger$ W.Bahsoun@lboro.ac.uk}
\email{$\ddagger$ M.Ruziboev@lboro.ac.uk}
\author{Christopher Bose$^{*}$}
\address{Department of Mathematics and Statistics, University of Victoria,
   PO BOX 3045 STN CSC, Victoria, B.C., V8W 3R4, Canada}
\email{$*$ cbose@uvic.ca}
\author{Marks Ruziboev$^{\ddagger}$}
\thanks{WB and MR would like to thank The Leverhulme Trust for supporting their research through the research grant RPG-2015-346. CB's research is supported by a research grant from the National Sciences and Engineering Research Council of Canada. The authors would like to thank V.\ Baladi for useful communications and helpful comments.}
\subjclass{Primary 37A05, 37E05}
\date{\today}
\keywords{Random dynamical systems, slowly mixing systems, quenched decay of correlations.}

\begin{abstract}
We study random towers that are suitable to analyse the statistics of slowly mixing random systems. We obtain upper bounds on the rate of \emph{quenched} correlation decay in a general setting.  We apply our results to the random family of Liverani-Saussol-Vaienti maps with parameters in $[\alpha_0,\alpha_1]\subset (0,1)$ chosen independently with respect to a distribution $\nu$ on  $[\alpha_0,\alpha_1]$ and show that the \emph{quenched decay of correlation} is governed by the fastest mixing map in the family.  
In particular, we prove that for every $\delta >0$, for almost every $\omega \in [\alpha_0,\alpha_1]^\mathbb Z$, the upper bound $n^{1-\frac{1}{\alpha_0}+\delta}$ holds on the rate of decay of correlation for H\"older observables on the fibre over $\omega$.  
For three different distributions $\nu$ on $[\alpha_0,\alpha_1]$ (discrete, uniform, quadratic), we also derive \textit{sharp} asymptotics on the measure of return-time intervals for the quenched dynamics, ranging from $n^{-\frac{1}{\alpha_0}}$ to $(\log n)^{\frac{1}{\alpha_0}}\cdot n^{-\frac{1}{\alpha_0}}$ to $(\log n)^{\frac{2}{\alpha_0}}\cdot n^{-\frac{1}{\alpha_0}}$ respectively. 

 \end{abstract}

\maketitle
\pagestyle{myheadings} 
\tableofcontents

\section{Introduction}
In this paper we study statistical properties of systems that evolve according to deterministic laws driven by a random process. Such systems are called random dynamical systems and they are often studied via analysis of a related deterministic system, the skew product map $T:X\times\Omega\to X\times\Omega$ given by: 
$$T(x,\omega):= (f_\omega(x), \sigma\omega),$$
where $\{f_\omega\}_{\omega\in\Omega}$ is a family of transformations that map $X$, the phase space, into itself, and $\sigma$ is a measure preserving map on $\Omega$, the noise space. The $f_{\omega}$'s are often referred to as the fibre maps and $\sigma$ is called the base map or the driving system. The fibre maps are the deterministic components of the random system, while the base map invokes the required randomness, or time dependence, or parameter drift in the system.

Recently there has been a remarkable interest in studying statistical limit theorems for random dynamical systems \cite{ANV, ALS, BB, DGFV1, DGFV2, HNTV, HRY, K, NTV}. Most of these results assume some knowledge about the rate of correlation decay of the random system under consideration. In this work, we develop random towers that are suitable to study \emph{quenched}\footnote{Quenched results in random dynamical systems refer to pathwise results for almost every $\omega$.} correlation decay for slowly mixing random systems. We obtain a general result on the rate of \emph{quenched} correlation decay. Moreover, we apply our results to answer the following questions: in what way does an individual map $f_\omega$, or a group of $f_\omega$'s, dictate the rate of quenched\footnote{In a simple model, yet important in the study of intermittent transition to turbulence \cite{PM}, the first question was answered in \cite{BBD} only for the annealed dynamics; i.e., for the dynamics averaged over $\Omega$, and only for a specific distribution on $\Omega$. Precisely \cite{BBD} considered a system that has only two fibre maps and with the base system being a Bernoulli shift.} correlation decay of the random system? A second question is: how does the distribution on $\Omega$ (the measure preserved by $\sigma$) effect the quenched statistics of the system? We answer the above two questions in the framework of the Pomeau-Manneville family \cite{PM} using the version popularised by Liverani-Saussol-Vaienti \cite{LSV}. Such systems have attracted the attention of both mathematicians and physicists (see \cite{LS} for a recent work in this area). In particular, for Liverani-Saussol-Vaienti \added[id=WB]{(LSV)} maps with parameters in $[\alpha_0,\alpha_1]\subset (0,1)$ and base dynamics $([\alpha_0,\alpha_1]^{\mathbb{Z}}, \sigma, \nu)$ we show via a general random tower construction, that the \emph{quenched decay of correlation} is governed by the fastest mixing map. Precisely, we prove that $n^{1-\frac{1}{\alpha_0}+\delta}$ is an upper bound on the rate of quenched decay of correlation, for all $\delta>0$. To illustrate the role that $\delta>0$ plays in the quenched  decay rate, and to address the second question above, we also obtain \emph{sharp} asymptotics on the position of return time intervals for the quenched dynamics in the Liverani-Saussol-Vaienti family that depend on the randomising distribution. In particular, we show how different distributions on $[\alpha_0,\alpha_1]$ (discrete, uniform, quadratic) change the \textit{sharp} asymptotics on the position of return time intervals for the quenched dynamics from $n^{-\frac{1}{\alpha_0}}$ to $(\log n)^{\frac{1}{\alpha_0}}\cdot n^{-\frac{1}{\alpha_0}}$ to $(\log n)^{\frac{2}{\alpha_0}}\cdot n^{-\frac{1}{\alpha_0}}$.   

In Section \ref{sec:introduction} we recall standard definitions and notation from random dynamical systems and present various natural notions of correlation decay in this setting. In Section \ref{sec:setup} we build random towers for our system and detail the dynamical hypotheses that are in force throughout the paper.  Our main general results are contained in  Section \ref{sec:results} (Theorems \ref{thm:acsm} and \ref{thm:DC})  where we prove existence and correlation decay estimates respectively for the dynamics on the random towers. In Section \ref{sec:lsv_app} we present detailed computations applying our general results to the case of random LSV maps on the interval. Three different randomising distributions are 
investigated: discrete, uniform and quadratic. At the end of Section \ref{sec:lsv_app} we also compute exact asymptotics for the measure of the return sets on the base of the 
random towers.  In Section \ref{sec:existence} we prove Theorem \ref{thm:acsm}.  The expansion and distortion conditions and related estimates are the main tools used in this section. 
In the next section, Section \ref{sec:decay1}, we introduce random stopping times and derive asymptotics on their distributions in preparation for a coupling argument. In Section \ref{sec:decay2} we obtain decay of correlation estimates (upper bounds) for observables on our random towers. Both future and past decay estimates are derived. We conclude with Section \ref{sec:appendix} where we present some technical results that are used repeatedly in the paper. 
Notation: We use $a\lesssim b$ if there exists universal constant $C$ such that $a \le Cb$;  $\sim$, $o$, $O$ will have their usual meaning.

\section{Random dynamical systems}\label{sec:introduction}
Let $(\mathbb A, \mathcal F, p)$ be a Borel probability space, let $\Omega = \mathbb{A}^{\mathbb Z}$ be equipped with 
product measure $P:= p^{\mathbb Z}$ and let $\sigma:\Omega\to \Omega$ denote the $P-$preserving two-sided shift map. Let $(X, \mathcal B) $ be a measurable space. Suppose that  $f_u :X\to X$ is  a family of measurable maps defined for $p$-almost every $u \in \mathbb A$  such that  the {\it skew product} 
$$
T: X\times \Omega \to X\times \Omega, \,\, T(x, \omega)=(f_{[\omega]_0}(x), \sigma\,\omega)
$$
 is measurable with respect to $\mathcal B\times \mathcal  F$ were $[\omega]_k \in \mathbb A$ denotes the $k$-th coordinate of $\omega \in \Omega$.  In order to simplify notation, we will normally
 write $f_\omega := f_{[\omega]_0}$ when there is no danger of confusion. So, for example, $f_{\sigma \omega} = f_{[\omega]_1}$. The resulting  i.i.d.\ random 
 map associated to the family $\{f_\omega\}$ can be viewed as follows:
letting  $X_\omega:=X\times \{\omega\}$ denote the fiber over $\omega$ and $f^n_\omega= f_{\sigma^{n-1}\omega}\circ \cdots \circ f_\omega: X_\omega \to X_{\sigma^n \added[id=WB]{\omega}}$ we have $T^n(x, \omega)=(f^n_\omega(x), \sigma^n\,\omega)$. We say that $\mu$ is a $T$-invariant measure if $\mu(T^{-1}A)=\mu(A)$ for any $A\in \mathcal B   \times\mathcal F$. 
Assume that  $\{ X_\omega\}_{\omega\in\Omega}$  forms measurable partition\footnote{This is  satisfied, for example, when $\mathbb A$ is Hausdorff so that $\{\omega\}$ is closed.} of $X\times \Omega$. We are interested in $T$-invariant probability measures, $\mu$, such that $\pi_*\mu=P$, where $\pi$ is the projection onto $\Omega$. Then by Rokhlin's disintegration theorem (see \cite{ViOl} or  \cite{Rok} ), for any such measure $\mu$ there exists an (essentially unique) system of probability measures $\mu_\omega$ \added[id=WB]{on $X_\omega$} such that  for any $A\in\mathcal B  \times  \mathcal F$
\begin{equation}\label{disintegration}
\omega \mapsto \mu_\omega(A) \,\, \text{is measurable and} \,\, \mu(A)=\int \mu_\omega(A)dP(\omega).
\end{equation}
It is easy to check that 
 $\mu$ is $T$-invariant if and only if $(f_\omega)_\ast\mu_\omega=\mu_{\sigma\omega}$ for $P$- a.e. $\omega$, a property
we naturally refer to as \emph{$f_\omega-$equivariance} (or simply \emph{equivarariance}, when the random map is understood) of the family $\{\mu_\omega\}$.

In this paper we study statistical properties of the equivariant family of measures  $\{\mu_{\omega}\}$ for $P$-almost every $\omega\in\Omega$, when 
$\mu_\omega$ is absolutely continuous with respect to Lebesgue measure $m$ on $X$.
More precisely we study \emph{future} and \emph{past} \emph{quenched correlations}:  given  $\varphi, \psi: X\times \Omega \to \mathbb R$ define
 \emph{future} and \emph{past} fibre-wise correlations
 \begin{equation}\label{dycorr}
\begin{split}
Cor^{(f)}_{n, \omega}(\varphi, \psi)=
\int (\varphi\circ f^{n}_\omega)\psi d\mu_\omega-\int \varphi d\mu_{\sigma^n\omega}\int \psi d\mu_{\omega},\\
Cor^{(p)}_{n, \omega}(\varphi, \psi)=
\int (\varphi\circ f^{n}_{\sigma^{-n}\omega})\psi d\mu_{\sigma^{-n}\omega}-\int \varphi d\mu_{\omega}\int \psi d\mu_{\sigma^{-n}\omega}.
\end{split}
\end{equation}
\begin{definition}
 Let $\mathcal B_1$ and $\mathcal B_2$ be two Banach spaces on $X\times \Omega$ and let $\{\rho_n\}_{n\in\mathbb N}$ be a sequence of positive numbers such that $\lim_{n\to \infty}\rho_n=0$. We say that $f_\omega$ admits \emph{quenched decay of correlations} at rate $\rho_n$ if 
for $P$-almost all $\omega$ and for any  $\varphi\in \mathcal B_1$ and $\psi \in \mathcal B_2$ there are constants $C_\omega$ and $C_{\varphi, \psi}$ such that  
\begin{equation}\label{opcorr}
|Cor^{(f)}_{n, \omega}(\varphi, \psi)|\le C_\omega C_{\varphi, \psi}\rho_n, \quad
 |Cor^{(p)}_{n, \omega}(\varphi, \psi)|\le C_\omega C_{\varphi, \psi}\rho_n.
\end{equation}
\end{definition}
\begin{remark}
Note that if $C_\omega$ is $P$-integrable, then this implies the same rate for the integrated correlations; i.e., $\int_{\Omega}Cor^{(f)}_{n, \omega}(\varphi, \psi)dP\le \hat C_{\varphi, \psi}\rho_n$. The importance of knowing the rate of the integrated correlations is due to its relation to the annealed correlations of the skew product. Indeed, setting $\bar\varphi:=\int_{\Omega}\varphi d\mu_{\omega}$ and  $\bar\psi:=\int_{\Omega}\psi d\mu_{\omega}$ we have
$$Cor_{n}(T, \varphi, \psi)=\int_{\Omega}Cor^{(f)}_{n, \omega}(\varphi, \psi)dP+ Cor_{n}(\sigma,\bar\varphi, \bar\psi).$$
 \end{remark}

\section{Abstract tower setting}\label{sec:setup}
 
A main tool, in particular in the absence of spectral techniques, to study statistical properties of dynamical systems is the so called Young Tower \cite{Y98, Y99}. Young Towers \replaced[id=WB]{have}{has} been used extensively to obtain rates of decay of correlations for nonuniformly hyperbolic systems (see for example \cite{ALP, AP, G06, Mel} and references therein).  In this section we describe random towers which were first considered in \cite{BBMD} to study \emph{quenched} statistical properties of i.i.d. unimodal maps. \added[id=WB]{Later the work of \cite{BBMD} was extended in \cite{DZ} to cover a wider class of i.i.d. unimodal maps}. Building on ideas from \cite{BBMD, DZ} we study random towers with slowly decaying tails. Let  $\Lambda\subset X$  be a measurable set with $m(\Lambda)=1$. Consider  a  family of maps $f_\omega:X\to X$, where $f_\omega$ depends only on  zeroth  coordinate of $\omega$. We say that $f_\omega$ admits a random tower on $\Lambda\subset X$  if for almost every $\omega\in\Omega$  there exists a countable partition $\{\Lambda_j(\omega)\}_j$ of $\Lambda$ and  a return time function $R_\omega:\Lambda\to \mathbb N$ that is constant on each $\Lambda_j(\omega)$ such that  $f^{R_\omega}_\omega(x)=f_{\sigma^{R_\omega(x)-1}\omega}\circ \cdots\circ f_{\sigma\omega}\circ f_\omega (x)\in\Lambda$ for $P$-almost every $\omega\in\Omega$ and $m$ -almost every $x\in\Lambda.$ 
Given the above information we define a random  tower for almost every $\omega$ as
\begin{equation}\label{tower}
\Delta_\omega=\left\{(x, \ell)\in\Lambda\times\mathbb {Z}_+\mid
x\in \cup_j\Lambda_j(\sigma^{-\ell}\omega),\ell\in\mathbb Z_+, 0\le \ell\le R_{\sigma^{-\ell}\omega}(x)-1\right \}
\end{equation} 
and random tower map $F_\omega: \Delta_\omega\to \Delta_{\sigma\omega}$ by 
\begin{equation}
F_\omega(x, \ell)=\begin{cases} 
(x, \ell+1), \quad\text{if}\quad \ell+1<R_{\sigma^{-\ell}\omega}(x) \\
(f^{\ell+1}_{\sigma^{-\ell}\omega}, 0), \quad\text{if}\quad \ell+1=R_{\sigma^{-\ell}\omega}(x).
\end{cases}
\end{equation}
Denote by $\Delta_{\omega, \ell}:=\{(x, \ell)\in\Delta_\omega\}$ the $\ell$th level of the tower, which is a copy of $\{x\in\Lambda\mid R_{\sigma^{-\ell}\omega}(x)>\ell\}$; for instance  $\Delta_{\omega, 0}=\Lambda$ and $F^{R_\omega}_\omega|\Delta_{\omega, 0}=f^{R_\omega}_\omega|\Lambda$. Let $\Delta=\{\Delta_{\omega}\}_{\omega\in\Omega}$.  Then   $F=\{F_\omega\}_{\omega\in\Omega}$ is a fibered system on $\Delta$; see Figure \ref{fig:pic1} for a pictorial representation.

\begin{figure}[!ht]
	\centering{
		\includegraphics[scale=0.4]{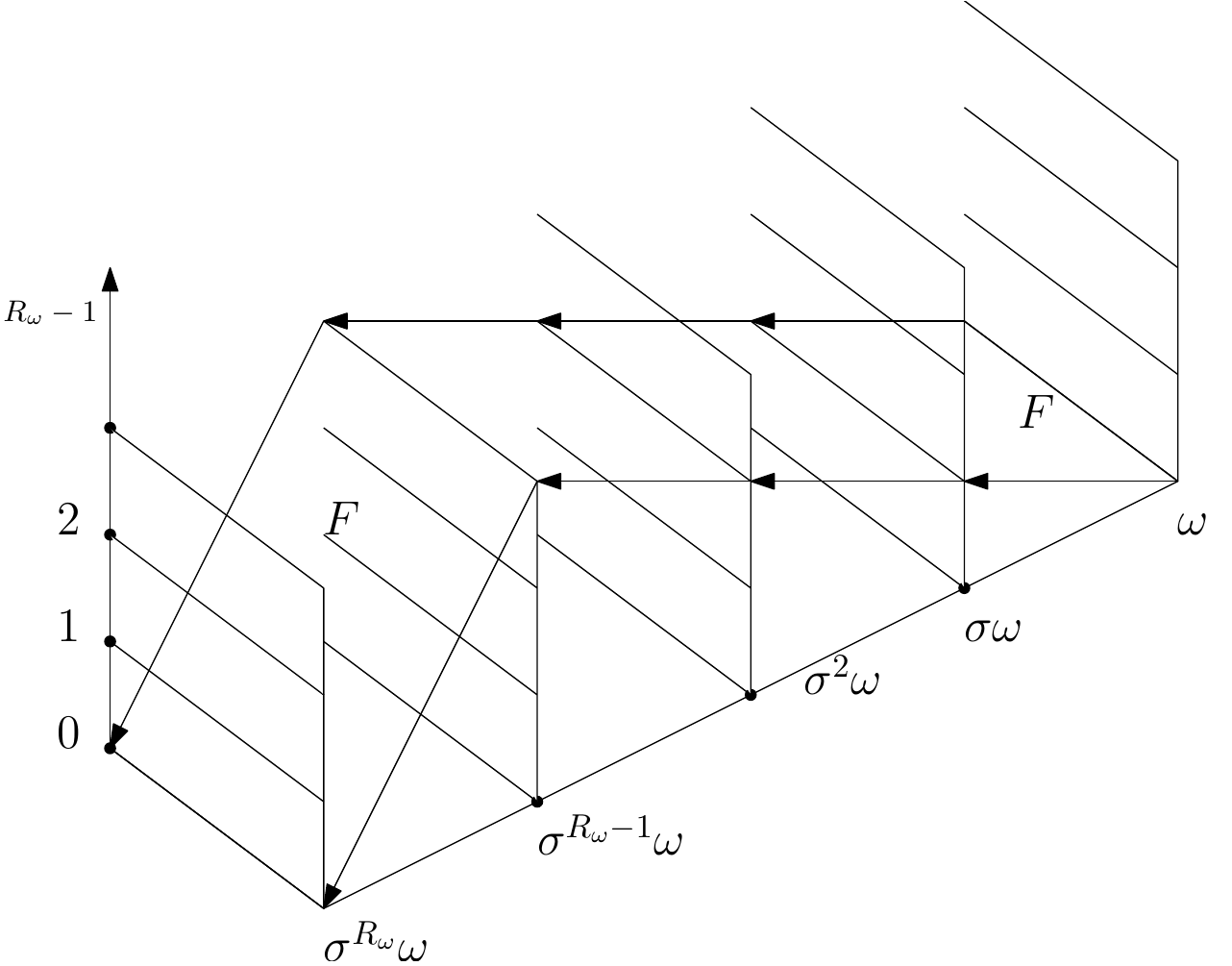}
	\caption{Dynamics on the tower}\label{fig:pic1}}
\end{figure}

Notice that $\{\Lambda_j(\omega)\}_j$ induces a countable partition $\mathcal P_\omega$ on each $\Delta_\omega$:
$$
\mathcal P_\omega:=\left\{ F^\ell_{\added[id=WB]{\sigma^{-\ell}}\omega}(\Lambda_j(\added[id=WB]{\sigma^{-\ell}}\omega))\mid R_\omega|\Lambda_j(\sigma^{-\ell}\omega)\ge \ell+1, \ell\in\mathbb Z_+\right\}.
$$
For $(x,\ell) \in \Delta_\omega$, let $\hR_\omega$ denote the first return time to the base of the tower $\Delta_{\sigma^{\hR_\omega} \omega}$ i.e.
\begin{equation}\label{eq:hatR}
\hR_\omega(x, \ell)=R_{\sigma^{-\ell}\omega}(x)-\ell.
\end{equation}
The reference measure $m$ and $\sigma$-algebra on $\Lambda$ naturally lifts to $\Delta_\omega$ and by abuse of notation we call it $m$. The lifted $\sigma$-algebra will be denoted by $\mathcal B_\omega$. 
Next we define the separation time $s:\Delta\times\Delta\to \mathbb Z_+\cup\{\infty\}$ for almost every $\omega$ by setting $s(z_1, z_2) =0$ {if} $ z_1$ {and} $z_2$  {lie in different towers} $\Dom$ and if $ z_1, z_2\in\Dom$ then 
$$
s(z_1, z_2)= \min\{n\ge 0\mid (F^{R_\omega}_\omega)^n(z_1)\,\,\text{and}\,\, (F^{R_\omega}_\omega)^n(z_2)\,\, \text{lie in distinct elements of}\,\, \mathcal P_\omega, \},
$$
Below we refer to $\Lambda$ as the zeroth level of the tower. 
We assume that the random tower satisfies the following properties. 
\begin{itemize}
\item[(P1)] \textbf{Markov}: for each $\Lambda_j(\omega)$ the map $F^{R_\omega}_\omega|\Lambda_j(\omega):\Lambda_j(\omega)\to \Lambda$ is a bijection;
\item[(P2)] \textbf{Bounded distortion}: There are constants $D>0$ and $0<\gamma<1$ such that for all $\omega$ and each $\Lambda_j(\omega)$ the map $F^{R_\omega}_\omega|\Lambda_j(\omega)$ and its inverse are non-singular with respect to $m$ with corresponding Jacobian $JF^{R_\omega}_\omega|\Lambda_j(\omega)$ which is positive and for each $x,y\in \Lambda_j(\omega)$  satisfies the following
\begin{equation}\label{bddd}
\left|\frac{JF^{R_\omega}_\omega(x)}{JF^{R_\omega}_\omega(y)}-1\right|\le D\gamma^{s(F^{R_\omega}_\omega(x, 0), F^{R_\omega}_\omega(\replaced[id=WB]{y}{x}, 0))};
\end{equation}
\item[(P3)] \textbf{Weak expansion}: $\mathcal P_\omega$ is a generating partition for $F_\omega$ i.e. diameters of the partitions $\vee_{j=0}^n F^{-j}_{\omega}\mathcal P_{\sigma^{j}\omega}$ converge to zero as $n$ tends to infinity;
\item[(P4)] \textbf{Return time asymptotics}: There are constants $C>0$,  \added[id=WB]{$a>1$}, $b\ge 0$, $u>0$, $v>0$, a full measure subset $\Omega_1\subset \Omega$ and a random variable $n_1:\Omega_1\to \mathbb N$ such that
 \begin{equation}
\begin{cases} \label{tail}
m\{x\in\Lambda\mid R_\omega(x)>n\}\le C\frac{(\log n)^b}{n^a}, \,\,\text{whenever} \,\,n\ge n_1(\omega),\\
P\{n_1(\omega)>n\} \le C e^{-un^v};
\end{cases}
\end{equation} 
\item[(P5)] \textbf{Aperiodicity}: There are $N\in\mathbb N$ and 
$\{t_i\in\mathbb Z_+\mid i=1, 2, ..., N\}$   such that g.c.d.$\{t_i\}=1$ and $\epsilon_i >0$ so that
for almost every $\omega \in \Omega$ and $i = 1, 2, \dots N$ we have
$m\{x\in\Lambda\mid R_\omega(x)=t_i\}>\epsilon_i$.
\item[(P6)] \textbf{Finiteness}  There exists an $M>0$ such that   $m(\Dom)\le M$ for all $\omega\in \Omega$.
\item[ (P7)] \textbf{Annealed return time asymptotics}: There are constants $C>0$,  $\hat b\ge 0$ and $a>1$ such that $(P\times m)\{x\in\Lambda |\, R_\omega=n\}\le C\frac{(\log n)^{\hat b}}{n^{a+1}}$.
\end{itemize}

\subsection{Tower projections}  For almost every $\omega\in \Omega$ and $(x, \ell)\in \Dom $ we define tower projections $\pi_\omega:\Dom\to X_{\added[id=WB]{\omega}}$  as $\pi_\omega(x, \ell)= f^\ell_{\sigma^{-\ell}\omega}(x)$. Then $\pi_\omega$ is a semi-conjugacy  i.e. $\pi_\omega\circ F_\omega =f_\omega\circ\pi_\omega$. Indeed, for $(x, \ell)\in \Dom$ we have $f_\omega(\pi_\omega(x, \ell))=f_\omega\circ f^{\ell}_{\sigma^{-\ell}\omega}(x)$. 
On the other hand, \added[id=WB]{since $F(x,\ell)\in\Delta_{\sigma\omega}$}  
$$
\pi_\omega(F_\omega(x, \ell))=\begin{cases} 
\pi_\omega(x, \ell+1) \quad \text{if} \quad R_{\sigma^{-\ell}\omega}(x)> \ell+1\\
\pi_\omega(f^{\ell+1}_{\sigma^{-\ell}\omega}(x), 0) \quad \text{otherwise} 
 \end{cases}
 =f^{\ell+1}_{\sigma^{-\ell}\omega}(x).
$$
Now, if $\nu_\omega$ is an absolutely continuous family of $F_\omega$-equivariant probability measures  on $\Dom$,   then $\mu_\omega:=(\pi_\omega)_\ast\nu_\omega$ is a family of  $f_\omega$-  equivariant  probability measures on $X\times\Omega$.  Since each $f_\omega$ is nonsingular, if $A\subset X$ is such that $m(A)=0$ then $m(\pi_\omega ^{-1}(A))=0$,  which implies  $\nu_\omega(\pi_\omega ^{-1}(A))=0$, consequently $\mu_\omega(A)=0$.  Therefore each $\mu_\omega$ is absolutely continuous. 
\section{Statement of main results}\label{sec:results}
In this section we state general theorems concerning quenched correlation decay for slowly mixing systems. We start this section by introducing some function spaces on $\Delta$, which are necessary to state the theorems. These spaces appeared in the present form in   \cite{BBMD}.
Below we let constants  $u>0,$ $v>0$, $a>1$, $b\ge 0$, $\gamma< 1$ be as in (P2) and  (P4) above and set   
\begin{equation*}\begin{aligned}
\mF^+_\gamma=\{\varphi_\omega:\Dom \to \mathbb R\mid & \exists C_\varphi>0, \forall I_\omega\in \mP_\omega, \,\,\, 
\text{either} \,\, \varphi_\omega|{I_\omega}\equiv 0 \\
&\text{or} \,\,\, \varphi_\omega | {I_\omega}> 0 \,\,\, \text{and} \,\,\, \left| \log \frac{\varphi_\omega(x)}{\varphi_\omega(y)}\right|\le C_\varphi \gamma^{s(x, y)}, \forall x, y\in I_\omega\}.
\end{aligned}
\end{equation*}
Let  $K_\omega: \Omega \to \mathbb R_+$ be a random variable with $\inf_{\Omega} K_\omega >0$ and 
\begin{equation}\label{Komega}
P\{\omega\mid K_\omega >n\} \le e^{-un^v}.
\end{equation}
Define the space of random  bounded  functions as 
\begin{equation*}\begin{aligned}
\mL^{K_\omega}_\infty=\{\vp_\omega:\Dom \to \mathbb R\mid \exists C^\prime_\vp>0,  \sup_{x\in\Dom} |\vp_\omega(x)| \le C_\vp' K_\omega \}
\end{aligned}
\end{equation*}
and a space of random Lipschitz  functions 
\begin{equation*}\begin{aligned}
\mF^{K_\omega}_\gamma=\{\vp_\omega\in \mL^{K_\omega}_\infty \mid \exists C_\vp>0,   |\vp_\omega(x)-\vp_\omega(y)| \le C_\vp K_\omega \gamma^{s(x, y)}, \,\, \forall  x, y \in \Dom\}.
\end{aligned}
\end{equation*}
Finally  we let $\mathcal B$ be a $\sigma$-algebra on $\Delta$ defined as follows: $B\in\mathcal B$ if and only if for each $\omega$ the intersection $B_\omega=B\cap \Dom \in \mathcal B_\omega$. Let  $\{\nu_\omega\}_{\omega\in\Omega}$ be a fibered equivariant family of measure i.e. $(F_\omega)_\ast\nu_\omega=\nu_{\sigma\omega}$. Let $\mu_\omega =(\pi_\omega)_\ast\nu_\omega$ and $\mu (A)=\int_{\Omega}\mu_\omega(A_\omega)dP(\omega)$. Then $\mu$ is  $T$-invariant (for the skew product $T$ on $X\times\Omega$).  We say that  $\nu$ is exact/mixing for $F$ if $\mu$ is exact/mixing for the skew product $T$. We can formulate equivalent conditions as follows. 
\begin{definition}
\text{ }
\begin{itemize}
\item[(i)] The fibered system $(F, \nu)=(F_\omega, \nu_\omega)_{\omega\in\Omega}$ is exact iff $\vee_{n=0}^{\infty}F^{-n}\mathcal B$ is trivial; i.e., \added[id=WB]{for any $B\in \vee_{n=0}^{\infty}F^{-n}\mathcal B$, either for almost all $\omega$, $\nu_{\omega}(B)=0$ or for almost all $\omega$, $\nu_\omega(B)=1$.} 
\item[(ii)] The random skew product $(F, \nu)$ is mixing  iff for all $\varphi, \psi \in L^2(\nu)$, 
$$
\lim_{n\to \infty}\left|\int_\Omega\int_{\Dom}\varphi_{\sigma^n\omega}\circ F^n_\omega\cdot\psi_\omega d\nu_\omega dP -\int_\Omega\int_{\Dom}\varphi_{\omega}d\nu_\omega dP\,\int_\Omega\int_{\Dom}\psi_{\omega}d\nu_\omega dP\right| =0.
$$
\end{itemize}
\end{definition}
We note that in our situation exactness implies mixing (\cite{BBMD}, section 4). The first result is the existence of absolutely continuous sample measures. 
\begin{theorem}\label{thm:acsm} Let $(F, \Delta)$ be the fibered system described above. 
There exists an
$F$-equivariant family of absolutely continuous sample probability measure $\nu_\omega =h_\omega m$
defined for almost every $\omega \in \Omega$, which is exact,  and hence mixing. Moreover,  there exists $K_\omega$ satisfying \eqref{Komega} such that $h_\omega\in \mF_{\gamma}^{K_\omega}\cap \mF_\gamma^+$ for almost every $\omega\in\Omega$.  
\end{theorem}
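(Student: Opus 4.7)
The plan is to construct $\{h_\omega\}$ in two stages: first produce an equivariant family of densities $\{h^\ast_\omega\}$ on the base $\Lambda=\Domo$ for the induced random map $F^{R_\omega}_\omega$, then spread these across tower levels to obtain $h_\omega$ on $\Dom$. The main obstacle will be matching the regularity of the lifted densities to the spaces $\mF_\gamma^+$ and $\mF_\gamma^{K_\omega}$, in particular recovering the stretched-exponential tail \eqref{Komega} for the random bound $K_\omega$.

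For the base, I would work with the fibrewise Perron--Frobenius operator $\mL^R_\omega$ of $F^{R_\omega}_\omega:\Lambda\to\Lambda$. The Markov property (P1) gives the usual sum-over-inverse-branches formula, and bounded distortion (P2) shows that $\mL^R_\omega$ leaves the cone $\mF_\gamma^+$ invariant with constants uniform in $\omega$. Combined with the generating property (P3), a Hilbert projective-metric contraction argument (as in \cite{Y99} and its random version in \cite{BBMD,DZ}) shows that the pulled-back sequence $\mL^R_{\sigma^{-1}\omega}\circ\cdots\circ \mL^R_{\sigma^{-n}\omega}\mathbf{1}_\Lambda$ is Cauchy and converges to an equivariant fixed point $h^\ast_\omega\in\mF_\gamma^+$, with $\|h^\ast_\omega\|_\infty$ uniformly bounded via the log-Lipschitz constraint.

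To lift, define
\[
h_\omega(x,\ell)=Z_\omega^{-1}\,h^\ast_{\sigma^{-\ell}\omega}(x)\,\mathbf{1}_{\{R_{\sigma^{-\ell}\omega}(x)>\ell\}},\qquad Z_\omega=\sum_{\ell\ge 0}\int_\Lambda h^\ast_{\sigma^{-\ell}\omega}\mathbf{1}_{\{R>\ell\}}\,dm;
\]
the series is finite by (P7) since $a>1$ and its total mass is controlled uniformly via (P6). Because $F_\omega$ has unit Jacobian on all levels with $\ell+1<R_{\sigma^{-\ell}\omega}$ and matches $F^{R_\omega}_\omega$ at return, the $F^R$-equivariance of $\{h^\ast_\omega\}$ immediately yields $F_\omega$-equivariance of $\{h_\omega m\}$. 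The log-Lipschitz structure of $\mF_\gamma^+$ is preserved by level-wise spreading, so $h_\omega\in \mF_\gamma^+$ with the same cone constants. The pointwise estimate $\sup h_\omega\lesssim K_\omega$ then reduces to a lower bound on $Z_\omega$, which I would obtain on the event $\{n_1(\omega)\le n\}$ using (P4) and the uniform lower bounds from (P5); the stretched-exponential tail on $n_1$ therefore transfers directly to $K_\omega$.

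Exactness is the final step: the aperiodicity hypothesis (P5), with positive mass $m\{R_\omega=t_i\}>\eps_i$ on returns of coprime depths $t_1,\ldots,t_N$, gives a Doeblin-type coupling for the induced chain on $\Lambda$ and so kills the tail $\sigma$-algebra $\bigcap_n F^{-n}\mathcal B$, while the generating property (P3) lifts this exactness to the full tower. Mixing of the skew product $T$ then follows as noted just before the theorem. The anticipated main difficulty is recovering the stretched-exponential tail in \eqref{Komega}: since $Z_\omega$ depends on the entire past fibre sequence, obtaining the required exponent $v>0$ demands a careful large-deviations estimate that leverages the independence of the fibres under $P$ together with the stretched-exponential bound on $n_1$ from (P4).
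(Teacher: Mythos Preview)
Your Stage~1 does not go through as written. The object ``$\mL^R_\omega$'' is not a well-defined fibre-to-fibre transfer operator: since $R_\omega$ is a \emph{function} on $\Lambda$, the induced map $F^{R_\omega}_\omega$ sends different partition elements $\Lambda_j(\omega)$ into the bases of \emph{different} fibres $\Delta_{\sigma^{R_\omega|\Lambda_j}\omega,0}$. There is therefore no single operator $\mL^R_\omega:L^1(\Lambda)\to L^1(\Lambda)$ over the base shift $\sigma$, and the composition $\mL^R_{\sigma^{-1}\omega}\circ\cdots\circ\mL^R_{\sigma^{-n}\omega}$ has no meaning. The correct equivariance equation for a base density $g_\omega$ is
\[
g_\omega=\sum_{n\ge 1}\ \sum_{j:\,R_{\sigma^{-n}\omega}|\Lambda_j=n}\bigl(F^{n}_{\sigma^{-n}\omega}|\Lambda_j\bigr)_*\,g_{\sigma^{-n}\omega},
\]
which couples all past fibres simultaneously and is not amenable to a one-step cone contraction. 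Your lifting step inherits this defect: for $h_\omega(x,\ell)=Z_\omega^{-1}h^\ast_{\sigma^{-\ell}\omega}(x)$ to satisfy $(F_\omega)_*h_\omega m=h_{\sigma\omega}m$ on interior levels one needs $Z_\omega=Z_{\sigma\omega}$, i.e.\ $Z_\omega$ constant in $\omega$, which there is no reason to expect.

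The paper avoids both problems by working directly on the full tower rather than inducing. It forms the Ces\`aro averages
\[
\phi_n^\omega=\frac{d}{dm}\Bigl(\frac{1}{n}\sum_{j=0}^{n-1}(F^j_{\sigma^{-j}\omega})_*(m|\Delta_{\sigma^{-j}\omega,0})\Bigr),
\]
and uses (P2) to show each summand, hence $\phi_n^\omega$, lies in $\mF_\gamma^+$ with \emph{uniform} log-Lipschitz constant and satisfies $\phi_n^\omega\le D':=e^{D/(1-\gamma)}$ independently of $n$ and $\omega$. A compactness/diagonal argument then extracts the equivariant limit $h_\omega$. In particular the paper obtains $h_\omega\in\mF_\gamma^1$ (i.e.\ $K_\omega$ can be taken \emph{constant}), so the large-deviations estimate you anticipate for the tail of $K_\omega$ is never needed.
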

 The next main result is about the decay of future and past quenched correlations.
\begin{theorem}\label{thm:DC}
Let $\delta>0$. Let $K_\omega$ \replaced[id=WB]{be the function given in Theorem \ref{thm:acsm}}{satisfy \eqref{Komega}}. There exits a full measure set $\Omega_0\subset\Omega$ and a random variable $C_\omega$ on $\Omega_0$ such that for every $\vp, \psi \in \mF_\gamma^{K_\omega}$  there exits a constant $C_{\vp, \psi}$ such that  for every $\omega\in \Omega_0$ 
\begin{itemize}
\item[(i)]{``Future" operational correlations} :  
$$
\left| \int (\varphi_{\sigma^n\omega}\circ F^{n}_\omega)\psi_\omega dm-\int \varphi_{\sigma^n\omega} d\mu_{\sigma^n\omega}\int \psi_\omega dm\right| \le C_\omega C_{\varphi, \psi}{n^{1+\delta-a}}.
$$
\item[(ii)]{``Past" operational correlations} :  
$$
\left| \int (\varphi_{\omega}\circ F^{n}_{\sigma^{-n}\omega})\psi_{\sigma^{-n}\omega} dm-\int \varphi_{\omega} d\mu_{\omega}\int \psi_{\sigma^{-n}\omega}dm\right| \le  C_\omega C_{\varphi, \psi}{n^{1+\delta-a}}.
$$
\end{itemize}
Moreover, there exist constants $C>0$, $u'>0$, $v'\in (0, 1)$ such that  
$$
P\{C_\omega> n\}\leq C e^{-u'n^{v'}}.
$$
\end{theorem}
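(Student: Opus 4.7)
The plan is to implement a coupling argument on the product tower $\Delta_\omega \times \Delta_\omega$ in the spirit of Young's original scheme adapted to the random setting as in \cite{BBMD}. For the future correlation, I would first reduce to a comparison between the pushforward $(F^n_\omega)_*(\psi_\omega m)$ and the equivariant measure $\nu_{\sigma^n\omega} = h_{\sigma^n\omega} m$: splitting $\psi_\omega$ into positive and negative parts and rescaling each piece to unit mass, the quantity to bound becomes
\[
\int \varphi_{\sigma^n\omega}\, d\bigl[(F^n_\omega)_*(\psi_\omega m) - c_\omega\, \nu_{\sigma^n\omega}\bigr],
\]
with $c_\omega$ chosen so that the bracket has zero total mass.

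Next I would run $F_\omega \times F_\omega$ on pairs $(z_1,z_2) \in \Delta_\omega \times \Delta_\omega$ drawn from $\psi_\omega m \otimes h_\omega m$ and look for simultaneous returns of both coordinates to the same partition element of $\mathcal{P}$ at the base $\Lambda$. Bounded distortion (P2) permits extracting at each such event a uniform fraction $\epsilon_0 > 0$ of coupled mass, while aperiodicity (P5) and the tail bounds (P4), (P7) guarantee via a Bernoulli-trial argument that a simultaneous return occurs within a bounded number of additional returns with overwhelming probability. Combined with the polynomial tail from (P4), summation then yields
\[
(\psi_\omega m \otimes h_\omega m)\{\tau_\omega > n\} \lesssim \frac{(\log n)^b}{n^{a-1}}, \qquad n \ge n_2(\omega),
\]
for the first coupling time $\tau_\omega$, where $n_2(\omega)$ inherits a stretched-exponential tail from $n_1(\omega)$ and the (P5) constants.

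After time $\tau_\omega$ the coupled mass contributes nothing to the correlation, while the uncoupled contribution is bounded by $\|\varphi_{\sigma^n\omega}\|_\infty$ times the coupling-time tail. Since $\varphi \in \mF^{K_\omega}_\gamma$ gives $\|\varphi_{\sigma^n\omega}\|_\infty \le C'_\varphi K_{\sigma^n\omega}$, and Borel--Cantelli applied to \eqref{Komega} yields $K_{\sigma^n\omega} \le n^{\delta/2}$ for all sufficiently large $n$ on a full-measure set $\Omega_0$, the rate becomes $n^{\delta/2} \cdot (\log n)^b n^{1-a} \lesssim n^{1+\delta - a}$, absorbing the logarithmic factor into the remaining $n^{\delta/2}$. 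The random variable $C_\omega$ then absorbs the threshold $n_2(\omega)$, the running supremum $\sup_n K_{\sigma^n\omega}/n^{\delta/2}$, and the distortion constants; it inherits a stretched-exponential tail from each ingredient.

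For the past correlation, the change of variables
\[
\int (\varphi_\omega \circ F^n_{\sigma^{-n}\omega})\psi_{\sigma^{-n}\omega}\, dm = \int \varphi_\omega\, d\bigl[(F^n_{\sigma^{-n}\omega})_*(\psi_{\sigma^{-n}\omega} m)\bigr]
\]
reduces matters to the convergence of this pushforward to $h_\omega m \cdot \int \psi_{\sigma^{-n}\omega}\,dm$; the same coupling argument, now on the fiber over $\sigma^{-n}\omega$ between $\psi_{\sigma^{-n}\omega} m$ and $h_{\sigma^{-n}\omega} m$, delivers the same rate. The hard part will be obtaining a purely \emph{pointwise} (in $\omega$) polynomial tail for the coupling time: (P4) only provides the tail for $n \ge n_1(\omega)$, so one must show the inherited threshold $n_2(\omega)$ still has stretched-exponential decay while keeping the per-trial success probability in the Bernoulli argument bounded below uniformly in $\omega$. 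This is precisely where the annealed bound (P7) enters — via a Markov/Borel--Cantelli step it converts $P \times m$-integrated tails on bad sets into pointwise tails at the cost of an $n^\delta$ loss, explaining the $\delta$ in the final exponent.
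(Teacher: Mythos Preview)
Your overall strategy matches the paper's: coupling on the product tower $\Delta_\omega\times\Delta_\omega$, matching a uniform fraction of mass at each simultaneous base return, bounding the uncoupled remainder by the coupling-time tail, and absorbing $K_{\sigma^n\omega}$ via Borel--Cantelli. However, two points in the sketch do not survive contact with the details.

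First, the claimed quenched tail $\tilde\lambda\{\tau_\omega>n\}\lesssim(\log n)^b n^{1-a}$ is too strong and is internally inconsistent with your closing remark that (P7) ``costs an $n^\delta$ loss.'' The paper achieves only $\tilde\lambda\{T_\omega>n\}\lesssim(\log n)^b n^{1-a+\delta}$ (Proposition~\ref{prop:tail}), and this loss occurs \emph{inside} the coupling-tail estimate, not as a separate conversion step. One splits $\{T_\omega>n\}$ according to whether the number of auxiliary alternating returns $\tau_i^\omega$ before time $n$ exceeds $q=\lfloor n^c\rfloor$. In the small-$q$ regime some gap $\tau_i^\omega-\tau_{i-1}^\omega$ exceeds $n/q$, and summing the (P4) tail over $i\le q$ introduces an unavoidable factor $q^{a+1}\sim n^{c(a+1)}=n^\delta$; in the large-$q$ regime many Bernoulli trials have failed, which is stretched-exponential. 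With your accounting (only $K_{\sigma^n\omega}\le n^{\delta/2}$ contributing to the exponent) you would be short by $n^\delta$; the paper runs both losses and obtains $n^{1-a+2\delta}$ before renaming $\delta$.

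Second, the role of (P7) is not a Markov/Borel--Cantelli conversion of an annealed tail. In the large-$q$ regime one must bound the set $B_q(K,\rho)$ where a positive fraction of the gaps $\tau_i^\omega-\tau_{i-1}^\omega$ exceed a fixed $K$. The key step (Lemma~\ref{lem:geetau}) is that for a prescribed sequence $\vec\tau$ the random variables $m\{\hat R_{\sigma^{\tau_{j-1}+\ell_0}\omega}=\tau_j-\tau_{j-1}-\ell_0\}$ depend on \emph{disjoint} coordinate blocks of $\omega$ and are therefore $P$-independent, so the expectation of their product factorizes. The annealed bound (P7) makes each factor summable (Lemma~\ref{gamma_tail}), yielding $\mathbb P(B_q)\le\kappa_1^q$; only then does a single Markov-inequality step transfer this to almost every fiber. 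A plain Borel--Cantelli on (P4) or (P7) alone does not produce the required factorization, and without it the large-$q$ regime does not close.
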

\begin{remark}\label{justificationrate}
A quenched correlation decay rate of the form $\frac{(\log n)^b}{n^{a-1}}$, which is analogous to what one expects in the deterministic setting, cannot be achieved since we want to get information on the integrability of the $C_{\omega}$ in Theorem \ref{thm:DC}. The shift of the Lipschitz constant $K_\omega$, and hence the dependence of that constant on $n$, in equation \eqref{eq:corr_tpsi} and the non-uniformity of the tail in (P4) are the main reasons for getting a rate at the order $\frac{1}{n^{a-1+\delta}}$, for any $\delta>0$. See Footnote \ref{justfood} for more details.
\end{remark}
\section{Applications to random LSV maps}\label{sec:lsv_app}
In this section we illustrate our results with applications to the family of intermittent LSV maps as 
described in \cite{LSV}. Let $0< \alpha <1$  and consider $f_\alpha:I \to I$ defined as 
\begin{equation}\label{def:lsv}
f_\alpha (x)= \begin{cases} 
x(1+2^\alpha x^\alpha), \quad x\in \left[ 0, \frac{1}{2}\right], \\
2x-1, \quad   x\in \left( \frac{1}{2}, 1 \right].
\end{cases} 
\end{equation}
To define a random LSV map we fix two positive numbers $0< \alpha_0 < \alpha_1 <1$ and  let $\nu$ be a probability measure on $[\alpha_0, \alpha_1]$.  Set  $\Omega=[\alpha_0, \alpha_1]^{\mathbb Z}$ and $P=\nu^{\mathbb Z}$. Then the shift  map $\sigma:\Omega\to \Omega$ preserves $P$.  Let $\alpha:\Omega \to [\alpha_0, \alpha_1]$ be the projection to the zeroth coordinate and let $f_{\alpha(\omega)}=f_\omega$. We consider the skew product  $T: I\times \Omega \to I\times \Omega$ defined by
$$
T(x, \omega)=(f_\omega(x), \sigma(\omega)).
$$
Compositions of $f_\omega$ are given by $f^n_\omega=f_{\sigma^{n-1}\omega}\circ \cdots \circ f_{\sigma\omega}\circ f_{\omega}$.

For each $\omega$ we define a  sequence of pre-images of $\frac{1}{2}$ as follows. 
Let  $x_1(\omega)=\frac{1}{2}$,  and
\begin{equation}\label{eq:x_n}
x_n(\omega) =\left(f_\omega |_{(0, 1/2]}\right)^{-1}x_{n-1}(\sigma\omega) \,\, \text{for} \,\, n\ge 2.
\end{equation}  
Further let 
\begin{equation}\label{eq:x_n'}
x_0'(\omega)=1, x_1'(\omega)=\frac{3}{4},  \,\,\, \text{and} \,\,\, x_{n}'(\omega)=\frac{x_n(\sigma\omega)+1}{2} \,\,\, \text{for}\,\,\, n\ge 2.
\end{equation}
The sequences $\{x_n(\omega)\}$ and $\{x_n'(\omega)\}$ will allow us to define the 
random tower structure. First of all notice that from the definition of $x_n'(\omega)$ we have $f_{\omega}(x_n'(\omega))=x_n(\sigma\omega)$, $f_{\omega}^2(x_n'(\omega))=x_{n-1}(\sigma^2\omega), ...,$
$f_{\omega}^n(x_n'(\omega))=x_1(\sigma^n\omega)=\frac{1}{2}$.
Let $\Lambda:=\left(\frac{1}{2}, 1\right]$. The sequence $\{x_n'(\omega)\}_{n\ge 0}$ generates a partition $\mP_{\omega}=\{(x_n'(\omega), x_{n-1}'(\omega)]\mid n\ge 0\}$ on each $\Lambda\times\{\omega\}=\Domo$. Define the return time $R_\omega:\Domo\to \mathbb N$ by setting  
\begin{equation}\label{eq:lsv_return}
R_\omega|_{(x_n'(\omega), x_{n-1}'(\omega)]}=n.
\end{equation}

A fibered system is obtained by defining a tower $\Dom$ over each $\omega \in \Omega$ by
$$
\Dom=\cup_{\ell=0}^\infty \cup_{i=\ell +1}^{\infty}(x_{i}'(\sigma^{-\ell}\omega), x_{i-1}'(\sigma^{-\ell}\omega)]\times {\ell}.
$$ 
The fibered map $F:(\omega, \Dom) \to (\sigma \omega, \Delta_{\sigma \omega}) $  from equation (\ref{tower}) can be expressed in this notation as follows:
Let $(x,\ell) \in \Dom$ over $\omega$, with $x \in (x_{i}'(\sigma^{-\ell}\omega), x_{i-1}'(\sigma^{-\ell}\omega)]$. There are two cases.  If $i > \ell +1$, then $F(x, \ell)= (x, \ell +1) \in \Delta_{\sigma \omega}$ since $x \in (x_{i}'(\sigma^{-\ell}\omega), x_{i-1}'(\sigma^{-\ell}\omega)] =  (x_{i}'(\sigma^{-\ell-1}\sigma \omega), x_{i-1}'(\sigma^{-\ell-1} \sigma \omega)] $.  On the other hand, if $i = \ell +1$ then 
$x \in (x_{\ell + 1}'(\sigma^{-\ell}\omega), x_{\ell}'(\sigma^{-\ell}\omega)]$, where
$R_{\sigma^{-\ell}\omega} \equiv \ell +1$, so the interval is 
mapped bijectively to  $(\frac{1}{2}, 1]$ by $f_{\sigma^{-\ell}\omega}^{\ell +1}$. Therefore in this case we have  $F(x, \ell) = ( f_{\sigma^{-\ell}\omega}^{\ell +1}(x), 0)$. 
\begin{proposition}\label{prop:applic_general}
The fibered system \replaced[id=CB]{$\{\Dom\}_{\omega \in \Omega}$}{$(\Dom)_{\omega \in \Omega}$} with fibered map $F$ defined above 
satisfies properties (P1)-(P3) and (P5).  In particular, the distortion condition (P3) is satisfied for any $\gamma\in[ \frac{1}{2},1)$ and $D< \infty$.  
\end{proposition}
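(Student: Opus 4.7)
\textbf{Proof plan for Proposition \ref{prop:applic_general}.}

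My overall strategy is to verify the four properties in turn, with most of the work concentrated on (P2) (note: the ``distortion condition'' labelled (P3) in the statement clearly means (P2) from Section~\ref{sec:setup}). The Markov and aperiodicity properties are essentially bookkeeping against the definitions (\ref{eq:x_n})-(\ref{eq:lsv_return}); the distortion bound and weak expansion rely on the uniform expansion of the induced map $F^{R_\omega}_\omega$ and a classical Liverani--Saussol--Vaienti style distortion estimate, made uniform using $\alpha(\omega)\in[\alpha_0,\alpha_1]\subset(0,1)$.

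\textbf{Step 1 (Markov, P1).} By construction $\Lambda_j(\omega)=(x_j'(\omega),x_{j-1}'(\omega)]$ and $R_\omega\equiv j$ there. For $j=1$ the right branch $2x-1$ sends $(3/4,1]$ bijectively onto $(1/2,1]=\Lambda$. For $j\ge 2$, definition (\ref{eq:x_n'}) gives $f_\omega(x_j'(\omega))=x_j(\sigma\omega)$ and $f_\omega(x_{j-1}'(\omega))=x_{j-1}(\sigma\omega)$, so $f_\omega$ maps $\Lambda_j(\omega)$ bijectively onto $(x_j(\sigma\omega),x_{j-1}(\sigma\omega)]\subset(0,1/2]$. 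Then (\ref{eq:x_n}) forces $f_{\sigma^k\omega}(x_{j-k}(\sigma^k\omega))=x_{j-k-1}(\sigma^{k+1}\omega)$, and iterating $j-1$ more left-branch applications one lands bijectively on $(x_1(\sigma^j\omega),1]=(1/2,1]=\Lambda$. This is (P1).

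\textbf{Step 2 (Aperiodicity, P5).} Take $t_1=1$ and $t_2=2$. Then $m(\Lambda_1(\omega))=1/4$ for every $\omega$, and a short computation using (\ref{eq:x_n'}) and the explicit form (\ref{def:lsv}) shows that $x_2'(\omega)=(x_2(\sigma\omega)+1)/2$ is bounded away from $3/4$ uniformly in $\omega\in\Omega$ (because $x_2(\sigma\omega)$ solves $y(1+2^{\alpha(\sigma\omega)}y^{\alpha(\sigma\omega)})=1/2$ on $(0,1/2]$, whose solution stays in a compact subinterval of $(0,1/2)$ for $\alpha(\sigma\omega)\in[\alpha_0,\alpha_1]$). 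Hence $m(\Lambda_2(\omega))\ge\epsilon_2>0$ uniformly. As $\gcd(1,2)=1$, (P5) follows.

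\textbf{Step 3 (Bounded distortion, P2).} This is the main technical step. I plan the usual two-ingredient argument. First, a pointwise distortion bound for the random composition: for $x,y$ in the same monotonicity branch of $f^j_\omega$, I will show
\begin{equation*}
\Bigl|\log\frac{Jf^j_\omega(x)}{Jf^j_\omega(y)}\Bigr|\le C\,|f^j_\omega(x)-f^j_\omega(y)|,
\end{equation*}
with $C$ depending only on $\alpha_0,\alpha_1$. This is obtained by telescoping over the orbit, writing $|\log f'_{\sigma^k\omega}(f^k_\omega x)/f'_{\sigma^k\omega}(f^k_\omega y)|\le (\sup|(\log f'_{\sigma^k\omega})'|)|f^k_\omega x-f^k_\omega y|$ on each interval, then controlling the sum by the standard LSV reverse-orbit trick (using that the backward orbit of points in $(0,1/2]$ under the left branch grows at a rate dominated by $j^{-1/\alpha_1}$), uniformly in $\omega$ since $\alpha(\omega)\le\alpha_1<1$. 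Second, to convert this into the separation-time bound (\ref{bddd}), I will use that the induced return map $F^{R_\omega}_\omega:\Lambda_j(\omega)\to\Lambda$ is \emph{uniformly} expanded by at least a factor of $2$: the final iterate is the right branch $2x-1$, which already contributes a factor of $2$, so $|F^{R_\omega}_\omega(x)-F^{R_\omega}_\omega(y)|\ge 2|x-y|$. Consequently, if $s=s(F^{R_\omega}_\omega(x,0),F^{R_\omega}_\omega(y,0))$, then after $s$ further applications of the induced return map the images still lie in a common piece of $\mathcal P$, whence $|F^{R_\omega}_\omega(x)-F^{R_\omega}_\omega(y)|\le 2^{-s}\,m(\Lambda)$. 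Combining gives (\ref{bddd}) with $\gamma=1/2$ and $D=C\cdot m(\Lambda)$; any $\gamma\in[1/2,1)$ works by trivially weakening the bound.

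\textbf{Step 4 (Weak expansion, P3).} I will show $\operatorname{diam}\bigl(\bigvee_{k=0}^n F_\omega^{-k}\mathcal P_{\sigma^k\omega}\bigr)\to 0$. Two points in a common atom of the $n$-th join sit in a common level $\ell$ and visit matching pieces of $\mathcal P$ under $n$ iterates of $F$. If they return to the base at least $r$ times within the first $n$ iterates, Step 3's uniform expansion by $2$ at each return implies their base-projections are within $2^{-r}\cdot m(\Lambda)$ of each other, and pushing back up the tower (the climbing map is a translation in $\ell$ on the interval coordinate) preserves this separation. It therefore suffices to note that for almost every $\omega$ and almost every $x$ the number of returns to $\Lambda$ in $n$ steps tends to infinity; this is immediate from $R_\omega<\infty$ a.s.\ (itself a consequence of the definitions (\ref{eq:x_n})-(\ref{eq:x_n'}), since $\sum_j m(\Lambda_j(\omega))=m(\Lambda)$ is finite).

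\textbf{Main obstacle.} The one genuinely nontrivial step is the uniform-in-$\omega$ bounded distortion estimate in Step 3; the telescoping sum has to be tamed along random compositions of left branches whose parameters vary in $[\alpha_0,\alpha_1]$, and the key is that the LSV reverse-orbit argument depends on $\alpha$ only through the bound $\alpha\le\alpha_1<1$, which is uniform by hypothesis. Everything else reduces to routine verifications from the construction.
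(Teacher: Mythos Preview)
Your plan is correct and would yield a valid proof, but your treatment of bounded distortion (P2) takes a genuinely different route from the paper's. The paper does not telescope at all: it observes that every $f_\alpha$ has negative Schwarzian derivative, that this property is preserved under composition, and then invokes the Koebe principle (citing Lemmas~4.8 and~4.10 of \cite{BBD} and \cite{MV}). The Koebe constant depends only on the fixed geometry of $\Lambda$ inside $[0,1]$, so uniformity in $\omega$ and in the length $j$ of the composition is automatic---no orbit asymptotics are needed. Your direct approach, summing $|f''/f'|$ along the random orbit and controlling the sum via the lower bound $x_k\gtrsim k^{-1/\alpha_1}$, is the classical LSV/Young argument made uniform over $[\alpha_0,\alpha_1]$; it works, but it forces you to track the orbit estimates explicitly and to check that the various constants (the local bound on $|(\log f')'|$, the contraction of $|x_k-y_k|$ relative to the final gap, etc.) combine uniformly over all random compositions. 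The Koebe route buys you this uniformity for free; your route is more self-contained but more laborious.

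Two small remarks. For (P5) the paper takes only $t_1=1$, since $\{R_\omega=1\}=(3/4,1]$ for every $\omega$ and $\gcd\{1\}=1$; your choice $t_1=1,\,t_2=2$ is correct but unnecessary. And in your Step~3, the right branch $2x-1$ is the \emph{first} iterate of $F^{R_\omega}_\omega$ (mapping $\Lambda_j(\omega)\subset(1/2,1]$ into $(0,1/2]$), not the final one; the remaining $j-1$ iterates are left-branch with derivative $\ge 1$, so your conclusion that the induced map expands by at least~$2$ is unaffected.
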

\begin{proof}
Since every map in the family $f_\alpha$ expands by at least a factor of $2$ on return to the base interval $(\frac{1}{2}, 1]$, with full returns,  we see that the Markov and weak expansion properties are satisfied.  Furthermore, for each $\omega$, 
$\{ x \in \Lambda ~|~ R_\omega(x) = 1\} = (\frac{3}{4}, 1]$, which implies that the aperiodicity condition (P5) is satisfied. Since every $f_\alpha$ has negative Schwarzian derivative and this property is preserved under 
composition, we obtain the bounded distortion condition (P3) using the Koebe principle.  See Lemmas 4.8 and 4.10 in  \cite{BBD} for computations related to Schwarzian derivatives and \cite{MV} for more details about the use of the Koebe principle.  This completes the proof. 
\end{proof}

It remains to establish appropriate estimates on the return time asymptotics as in (P4) \added[id=CB]{and (P7)} and the uniform bound in (P6).
Observe, in view of the return-time formula \eqref{eq:lsv_return}, that
\begin{equation}\label{eq:R_for_xn} 
m\{ R_{ \omega} > \ell\} = \frac{1}{2} x_\ell( \sigma \omega).
\end{equation}

We will estimate terms on the right hand side of this expression.  
Let $\amin$ (respectively $\amax$) denote the special sequences of all $\omega_k = \alpha_0$ (respectively, all $\omega_k = \alpha_1$).  
Following Lemma 4.4 in \cite{BBD}, we obtain coarse estimates on the location of 
the $x_n(\omega)$. Translated into our setting these estimates imply,
for every $\ell, n\in \mathbb N$,
\begin{equation}\label{changeq}
x_\ell(\amin)\le x_\ell(\omega) \le  x_\ell(\amax).
\end{equation}
It is well known (see \cite{Y99} section 6, for example) that 
$x_\ell(\amin) \sim 
\frac{1}{2} \alpha_0^{-\frac{1}{\alpha_0}}\ell^{-\frac{1}{\alpha_0}}$ so if we define
$c_\ell(\amin) := x_\ell(\amin)\ell^{\frac{1}{\alpha_0}}$ and write
$x_\ell(\amin)=\frac{ c_\ell(\amin)}{\ell^{\frac{1}{\alpha_0}}}$ then 
$\lim_\ell c_\ell(\amin) = \frac{1}{2} \alpha_0^{-\frac{1}{\alpha_0}}:= c(\amin)$.
We define $c_\ell(\amax)$ and $c(\amax)$ analogously using $x_\ell(\amax)$.
Since, $m\{ R_{ \omega} > \ell\} =\frac{1}{2} x_\ell( \added[id=WB]{\sigma}\omega) \le  \frac{1}{2} x_\ell(\amax) \le C\ell^{-\frac{1}{\alpha_1}}$ this implies $m(\Dom)\le M$ for some $M>0$ independent of $\omega$. This proves (P6).  

We now check that  assumption (P4) is satisfied. 
Using definition \eqref{def:lsv} and 
the estimate $(1 + x)^{-\alpha} \leq 1 - \alpha x + \frac{\alpha(1+\alpha)}{2}x^2$, valid for  
$\alpha>0,~ x \geq 0$, and by substituting $x = [2x_n(\omega)]^{\alpha(\omega)}$ we obtain
\begin{equation}\label{eq:onestep}\begin{split}
\frac{1}{[x_{n}(\omega)]^{\alpha_0}} -\frac{1}{[x_{n-1}(\sigma \omega)]^{\alpha_0}} &\geq 
\alpha_0 2^{\alpha_0} [2x_n(\omega)]^{\alpha(\omega) - \alpha_0}\\
&- \frac{\alpha_0(1+\alpha_0)}{2}2^{\alpha_0}
[2x_n(\omega)]^{2\alpha(\omega) - \alpha_0}.\\
\end{split}
\end{equation}
Iterating this one-step estimate along the sequence  $x_k(\sigma^{\ell-k} \omega)$, $k= 1, 2, \dots \ell$  we obtain 
\begin{equation}\begin{split}\label{eq:xnsigma_n_1}
\frac{1}{[x_{\ell}(\omega)]^{\alpha_0}} \ge 2^{\alpha_0} &+ \alpha _02^{\alpha_0} \{
\sum_{k=2}^\ell[2x_k(\sigma^{\ell-k}\omega)]^{\alpha(\sigma^{\ell-k}\omega) - \alpha_0} \\ 
&- \frac{1+ \alpha_0 }{2} \sum_{k=2}^\ell [2x_k(\sigma^{\ell-k}\omega)]^{2\alpha(\sigma^{\ell-k}\omega) - \alpha_0}\}.
\end{split}
\end{equation}

Combining  equations \eqref{eq:xnsigma_n_1} and \eqref{changeq} implies that, for any 
parameter $q\geq 0$ 
\begin{equation}\label{xnomega}\begin{split}
&\frac{(\log\ell)^q}{\ell[x_{\ell}(\omega)]^{\alpha_0}} \ge \frac{2^{\alpha_0}(\log\ell)^q}{\ell} + 
\alpha_0 2^{\alpha_0} 
\biggl\{\frac{(\log\ell)^q}{\ell}\sum_{k=2}^\ell\left[\frac{2c_k(\alpha_0)}{k^{\frac{1}{\alpha_0}}}\right]^{\alpha(\sigma^{\ell-k}\omega) - \alpha_0}\\ 
&- \frac{1+ \alpha_0}{2} \frac{(\log\ell)^q}{\ell-1} \sum_{k=2}^\ell \left[\frac{2c_k(\alpha_1)}{k^{\frac{1}{\alpha_1}}}\right]^{2\alpha(\sigma^{\ell-k}\omega) - \alpha_0}\biggr\}\\
&\geq 
\alpha_0 2^{\alpha_0} \frac{(\log\ell)^q}{\ell}\biggl\{\sum_{k=2}^\ell
\left[\frac{2c_k(\alpha_0)}{k^{\frac{1}{\alpha_0}}}\right]^{\alpha(\sigma^{\ell-k}\omega) - \alpha_0}\\
&\hskip 2 cm - \frac{1+ \alpha_0}{2}  \left[\frac{2c_k(\alpha_1)}{k^{\frac{1}{\alpha_1}}}\right]^{2\alpha(\sigma^{\ell-k}\omega) - \alpha_0}\biggr\}= \frac{(\log\ell)^q}{\ell}\sum_{k=1}^\ell  A_k(\omega),
\end{split}
\end{equation}
where we have introduced notation $A_k(\omega)$ for the sequence of independent random variables
$A_1 \equiv 0$ and for $k=2, 3, \dots \ell$
\begin{equation}\label{eq:Ak}
A_k(\omega):= \alpha_02^{\alpha_0} \left[\frac{2c_k(\alpha_0)}{k^{\frac{1}{\alpha_0}}}\right]^{\alpha(\sigma^{\ell-k}\omega) - \alpha_0}  - \frac{1+ \alpha_0}{2} \alpha_02^{\alpha_0}\left[\frac{2c_k(\alpha_1)}{k^{\frac{1}{\alpha_1}}}\right]^{2\alpha(\sigma^{\ell-k}\omega) - \alpha_0}.
\end{equation}
From now on we write $A_k:=A_k(\omega)$. Note that $-\alpha_02^{\alpha_0} \leq A_k \leq \alpha_02^{\alpha_0}$ for every $k$. 

\bigskip
\noindent {\bf Assumption (A1) (Asymptotics on expectations)}\\
  Assume\footnote{Below in the examples, we will show that the assumption is satisfied for different types of distributions. In particular, we will show how different measures $\nu$ lead to different tail asymptotics.} there are 
constants $q=q(\nu)\geq 0$ and a constant $c(\nu)>0$ such that the following holds:\\
\begin{equation}\label{eq:Ak_asympt}
\frac{(\log\ell)^q}{\ell}\sum_{k=1}^{\ell}E_\nu A_k\to c(\nu).
\end{equation}
\noindent
Fix any $0< c <c(\nu)$.  Pick $N_1$ so that 
for all $\ell > N_1$, 
$$\frac{(\log\ell)^q}{\ell}\sum_{k=1}^{\ell}E_\nu A_k  \geq  \frac{c + c(\nu)}{2}.$$
Note that, given expression \eqref{eq:Ak_asympt},  $N_1$ depends only on the choice of $c$, and in 
particular is independent of $\omega$.  Set $r_0 = \alpha_02^{\alpha_0}$. 

\begin{lemma}\label{lemma:hoeffding}
For each $t>0$ we have 
\begin{equation}\label{eq:large_deviations}
P\left\{\frac{(\log \ell)^q}{\ell} \left|\sum_{k=1}^\ell A_k - \sum_{k=1}^\ell E_\nu A_k\right| \geq t \right\} \leq 
 \exp\left[-\frac{\ell t^2}{2r_0 (\log \ell)^{2q}}\right].
\end{equation}
\end{lemma}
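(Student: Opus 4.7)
The plan is to invoke a classical Hoeffding-type concentration inequality for sums of independent, uniformly bounded random variables.  The key structural observation is that $A_k(\omega)$, as defined in \eqref{eq:Ak}, is a function only of $\alpha(\sigma^{\ell-k}\omega) = [\omega]_{\ell-k}$, i.e.\ the $(\ell-k)$-th coordinate of $\omega$.  Because $P = \nu^{\mathbb Z}$ is a product measure and the indices $\ell-1, \ell-2, \ldots, 0$ picked out as $k$ ranges over $1, \ldots, \ell$ are all distinct, the random variables $A_1, \ldots, A_\ell$ are independent under $P$.  Moreover, the bound $-r_0 \le A_k \le r_0$ (with $r_0 = \alpha_0 2^{\alpha_0}$) was already recorded immediately after \eqref{eq:Ak}, so each centered variable $A_k - E_\nu A_k$ lies in an interval of length at most $2 r_0$.

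With independence and uniform boundedness in place, the classical Hoeffding inequality gives, for every $s > 0$,
\begin{equation*}
P\left\{\left|\sum_{k=1}^\ell A_k - \sum_{k=1}^\ell E_\nu A_k\right| \ge s\right\} \le 2 \exp\left(-\frac{s^2}{2 \ell r_0^2}\right).
\end{equation*}
The desired estimate is then obtained by rescaling: substitute $s = t \ell / (\log \ell)^q$, so that the event on the left becomes exactly the event appearing in the statement of the lemma, and the exponent in the upper bound becomes $-t^2 \ell / \bigl(2 r_0^2 (\log \ell)^{2q}\bigr)$, matching (up to the harmless numerical factor $2$ in front of the exponential and the precise power of $r_0$) the claimed inequality.

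There is essentially no obstacle: the product structure of $P$ delivers independence for free, the uniform bound on $A_k$ was extracted in the derivation of \eqref{eq:Ak}, and the concentration estimate is a direct application of Hoeffding followed by a one-line change of variable.  The only point worth double-checking is that the map $k \mapsto \ell - k$ is indeed injective on $\{1, \ldots, \ell\}$, which ensures that distinct $A_k$'s depend on disjoint coordinates of $\omega$; this is immediate from the definition.
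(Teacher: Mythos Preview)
Your proposal is correct and follows essentially the same approach as the paper: apply Hoeffding's inequality to the independent, uniformly bounded random variables $A_k$ and then rescale. Your added justification of independence (that distinct $A_k$ depend on distinct coordinates of $\omega$) and your observation about the minor discrepancy in constants ($2r_0^2$ versus $2r_0$, and the prefactor $2$) are accurate and harmless for the subsequent applications.
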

\begin{proof} 
We may apply the classical result of Hoeffding (see \cite{Ho} Theorem 1,  or \cite{Mc} Lemma 1.2) to the sequence of independent random variables $A_k$, noting that instead of the bound 
$0 \leq A_k \leq 1$ we have $-r_0 \leq A_k \leq r_0$, accounting for the extra factor in the exponential. 
\end{proof}

Next, we apply the previous lemma to obtain a large deviation estimate on the normalized 
sums of $A_k$. For each $\ell > N_1$:

\begin{equation}\label{eqn:P4_derive}\begin{split}
P\left\{\frac{(\log \ell)^q}{\ell} \sum_{k=1}^\ell A_k < c \right\}
&=P\left\{\frac{(\log \ell)^q}{\ell} \{\sum_{k=1}^\ell  A_k - \sum_{k=1}^\ell  E_\nu A_k\} < c - \frac{(\log \ell)^q}{\ell}  \sum_{k=1}^\ell E_\nu A_k \right\}\\
&\leq P\left\{ \frac{(\log \ell)^q}{\ell} \left| \sum_{k=1}^\ell  A_k - \sum_{k=1}^\ell  E_\nu A_k \right| > \frac{c(\nu) - c}{2}\right\}\\
&\leq \exp\left[-\frac{\ell [c(\nu) - c]^2}{8r_0 (\log \ell)^{2q}}\right],
\end{split}
\end{equation}
where in the last line we have used equation (\ref{eq:large_deviations}).  Now define 
$$n_1(\omega) := \inf\{ n > N_1 ~|~ \forall  \ell > n, ~   \frac{(\log \ell)^q}{\ell} \sum_{k=1}^\ell A_k \geq c \}.$$

If $\ell > n_1(\omega)$ then equation (\ref{xnomega}) implies that 
\begin{equation}
 \frac{(\log\ell)^q}{\ell[x_{\ell}(\omega)]^{\alpha_0}}  \geq c
\end{equation}
and hence 
\begin{equation}\label{eq:upperxnomega}
x_{\ell}(\omega) \leq \biggl[ \frac{1}{c}\biggr]^{\frac{1}{\alpha_0}}
\frac{[\log \ell]^{\frac{q}{\alpha_0}}}{\ell^{\frac{1}{\alpha_0}}} \leq 2c(\nu)^{-\frac{1}{\alpha_0}}
\frac{[\log \ell]^{\frac{q}{\alpha_0}}}{\ell^{\frac{1}{\alpha_0}}},
\end{equation}
provided $\frac{c(\nu)}{2^{\alpha_0}} < c < c(\nu)$.  We may take $b:= \frac{q}{\alpha_0}$ and $a := \frac{1}{\alpha_0}$ and $C=c(\nu)^{-\frac{1}{\alpha_0}}$ in (P4).  Note that the constant $C$ is independent of $n$ and $\omega$.  Finally, we estimate, for $n > N_1$ and fixed $0<v<1$
\begin{equation} \begin{split}\label{eq:n1}
P\left\{ n_1( \omega) > n  \right\}
&\leq \sum_{\ell > n} \exp\left[-\frac{\ell [c(\nu) - c]^2}{8r_0 (\log \ell)^{2q}}\right]\\
&\leq \sum_{\ell > n} \exp\left[-u \ell^v\right] \leq C\exp\left[-u n^v\right]
\end{split}
\end{equation}
for suitable constants $u>0$ and $C < \infty$.  We can remove the restriction $n> N_1$ by substitution of a larger constant $C$ in the final expression
$$P\left\{ n_1( \omega) > n  \right\} \leq Ce^{-un^v}, $$
completing the second condition in (P4). 
Finally we verify (P7). 

First, we have
\begin{equation}\label{step1}
\begin{split}
(P\times m)\{x\in\Lambda |\, R_\omega=n\}&=(P\times m)\{x\in\Lambda |\, R_\omega>n-1\}\\
&-(P\times m)\{x\in\Lambda |\, R_\omega>n\}\\
&=\frac12 E_\nu [x_{n-1}(\omega)]-\frac12 E_\nu [x_{n}(\omega)]\\
&=\frac12 E_\nu [x_{n-1}(\sigma\omega)]-\frac12 E_\nu [x_{n}(\omega)]\\
&=\frac12 E_\nu[x_{n-1}(\sigma\omega)-x_{n}(\omega)],
\end{split}
\end{equation}
where we have used the fact that $P$ is $\sigma$-invariant to write $E_\nu [x_{n-1}(\omega)]=E_\nu [x_{n-1}(\sigma\omega)]$.
Now recall that $f_\omega(x_n(\omega))=x_{n-1}(\sigma\omega)$ and $f_\omega(x_n(\omega))= x_n(\omega)+2^{\alpha(\omega)} (x_n(\omega))^{\alpha(\omega)+1}$. Using this fact, \eqref{step1}, \eqref{eq:upperxnomega} and $0\le 2x_n(\omega)\le 1$, we obtain
\begin{equation}\label{step2}
\begin{split}
&(P\times m)\{x\in\Lambda |\, R_\omega=n\}=\frac12 E_\nu [2^{\alpha(\omega)} (x_n(\omega))^{\alpha(\omega)+1}]\\
&\le \frac12 E_\nu [2^{\alpha_0} (x_n(\omega))^{\alpha_0+1}]= 2^{\alpha_0-1} E_\nu [x_n(\omega)]^{\alpha_0+1}\\
&\le 2^{\alpha_0-1} \left(E_\nu [\mathbb 1_{\{n_1(\omega)\le n\}}\cdot x_n(\omega)^{\alpha_0+1}]+E_\nu [\mathbb 1_{\{n_1(\omega)> n\}}\cdot x_n(\omega)^{\alpha_0+1}]\right)\\
&=2^{2\alpha_0}c(\nu)^{-1-\frac{1}{\alpha_0}}
\frac{[\log n]^{\frac{q(\alpha_0+1)}{\alpha_0}}}{n^{\frac{1}{\alpha_0}+1}}+Ce^{-un^v}\le \hat C \frac{[\log n]^{\frac{q(\alpha_0+1)}{\alpha_0}}}{n^{\frac{1}{\alpha_0}+1}}.
\end{split}
\end{equation}
Choosing $\hat b=\frac{q(\alpha_0+1)}{\alpha_0}$ and $a=\frac{1}{\alpha_0}$ completes the verification of (P7).
\begin{theorem} \label{thm:lsv_main} Let $0< \alpha_0 < \alpha_1 < 1$ be fixed and
$\Omega = [\alpha_0, \alpha_1]^\mathbb Z$ equipped with product probability measure
$P:=\nu^\mathbb Z$ and left shift $\sigma$. Let $f_\omega,~  \omega \in \Omega$ be a random family of LSV maps with 
respect to the measure $\nu^{\mathbb Z}$.
Assume condition (A1) holds for the asymptotic expectations. 
 Then there exists a family of 
absolutely continuous sample stationary measures $\mu_\omega$ on $[0,1]$, for almost every 
$\omega \in \Omega$ (i.e.    ${f_{\omega}}_* \mu_\omega = \mu_{\sigma \omega}$). The system $\{ f_\omega, \mu_\omega\}_{\omega \in \Omega}$ is mixing, i.e., setting $\mu = \int_\Omega \mu_\omega dP$,
for all $\varphi, \psi \in L^2(\mu)$, 
$$
\lim_{n\to \infty}\left|\int_\Omega\int_0^1\varphi_{\sigma^n\omega}\circ f^n_\omega\cdot\psi_\omega d\mu_\omega dP -\int_\Omega\int_0^1\varphi_{\omega}d\mu_\omega dP\int_\Omega\int_0^1\psi_{\omega}d\mu_\omega dP\right| =0.
$$

Moreover, for every $\delta >0$ there exists a full measure subset $\Omega_0\subset \Omega$ such that for every $\omega \in \Omega_0$ there exists $C_\omega < \infty$ so that for any $\varphi\in L^\infty[0,1]$, $\psi \in C^\eta[0,1]$, the class of $\eta-$ H\"older functions on $[0,1]$, we have

\begin{itemize}
\item[(i)]{``Future" correlations} :  
$$
\left| \int (\varphi_{\sigma^n\omega} \circ f^{n}_\omega)\psi_\omega\, d\mu_\omega-\int \varphi _{\sigma^n\omega}d\mu_{\sigma^n\omega}\int \psi_{\omega} d\mu_{\omega}\right| \le C_\omega C_{\varphi,\psi }\frac{1}{n^{\frac{1}{\alpha_0} -\delta -1}}.
$$
\item[(ii)]{``Past" correlations} :  
$$
\left| \int (\varphi_\omega \circ f^{n}_{\sigma^{-n}\omega})\psi_{\sigma^{-n}\omega}\, d\mu_{\sigma^{-n}\omega}-\int \varphi_\omega \, d\mu_{\omega}\int \psi_{\sigma^{-n}\omega} d\mu_{\sigma^{-n}\omega}\right| \le C_\omega C_{\varphi,\psi}\frac{1}{n^{\frac{1}{\alpha_0} -\delta -1}}.
$$
\end{itemize}
Finally, there exist constants $C>0$, $u'>0$ and $0< v' <1$ such that the random variable $C_\omega$ satisifies the following 
tail estimates: for all $n \in \mathbb N$
$$
P\{ C_\omega >n\} \leq Ce^{-u'n^{v'}}.
$$
In particular, $C_\omega$ is integrable. Every $\eta\in(0,1]$ can be used by choosing\footnote{Recall that $\gamma$ is the regularity parameter in the distortion condition (P2).} $\gamma\in[\frac12, 1)$ so that $2^{\eta}\gamma\ge1$.
\end{theorem}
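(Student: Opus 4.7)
The plan is to unpack the abstract tower theorems on the LSV tower from Section \ref{sec:lsv_app} and transfer the resulting statements back to the interval via the tower projection $\pi_\omega$.

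First I would collect what has already been shown in this section. Proposition \ref{prop:applic_general} provides (P1)--(P3) and (P5) for any $\gamma \in [\tfrac12, 1)$, and the computations culminating in \eqref{eq:upperxnomega}, \eqref{eq:n1}, and \eqref{step2} verify (P4), (P6) and (P7) under assumption (A1) with $a = 1/\alpha_0$, $b = q/\alpha_0$, and $\hat b = q(\alpha_0+1)/\alpha_0$, together with the required stretched-exponential tail of $n_1$. With every hypothesis in force, Theorem \ref{thm:acsm} supplies an $F$-equivariant absolutely continuous family $\nu_\omega = h_\omega m$ on $\Delta$ with $h_\omega \in \mF_\gamma^+ \cap \mF_\gamma^{K_\omega}$ for some $K_\omega$ satisfying \eqref{Komega}, and $(F,\nu)$ is exact, hence mixing. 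Setting $\mu_\omega := (\pi_\omega)_\ast \nu_\omega$ and using the semi-conjugacy $\pi_\omega \circ F_\omega = f_\omega \circ \pi_\omega$ together with the nonsingularity of each $f_\omega$ (as discussed at the end of Section \ref{sec:setup}) gives the first assertion: each $\mu_\omega$ is absolutely continuous on $[0,1]$, $(f_\omega)_\ast \mu_\omega = \mu_{\sigma\omega}$, and mixing on $\Delta$ pushes forward to mixing of $T$ under $\pi_\omega$.

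For the decay estimates I would lift observables by $\tilde\vp_\omega := \vp_\omega \circ \pi_\omega$ and $\tilde\psi_\omega := \psi_\omega \circ \pi_\omega$. Using $d\mu_\omega = h_\omega\,dm$ (on the tower) and the semi-conjugacy, the fibre-wise correlation of $\vp$ and $\psi$ on $[0,1]$ equals, term for term, the operational correlation on $\Delta$ of $\tilde\vp$ against $\tilde\psi_\omega h_\omega$ in the sense of Theorem \ref{thm:DC}(i) (respectively (ii) for the past case with $\omega$ replaced by $\sigma^{-n}\omega$). Thus, provided the product $\tilde\psi_\omega h_\omega$ lies in $\mF_\gamma^{K_\omega}$, the rate $n^{1+\delta-a} = n^{1+\delta-1/\alpha_0}$ and the stretched-exponential tail on $C_\omega$ (and hence its integrability) are inherited directly from Theorem \ref{thm:DC}.

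The main technical obstacle is the regularity step for the lifted observables. For $\psi \in C^\eta[0,1]$ and $x,y \in \Dom$ with separation time $s \ge 1$, both points share a partition element of $\mP_\omega$; since every $f_\alpha$ expands by a factor of at least $2$ on $(0, \tfrac12]$ and exactly $2$ on $(\tfrac12, 1]$, iterating the Markov full-return map $F^{R_\omega}_\omega$ yields $|\pi_\omega(x) - \pi_\omega(y)| \le 2^{-s}$. Therefore
$$
|\tilde\psi_\omega(x) - \tilde\psi_\omega(y)| \le \|\psi\|_{C^\eta}\, 2^{-\eta s} \le \|\psi\|_{C^\eta}\,\gamma^s
$$
exactly when $\gamma \ge 2^{-\eta}$, i.e.\ when $2^\eta\gamma \ge 1$, which is the origin of the footnote's restriction on $\gamma$. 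Combining this bound with $h_\omega \in \mF_\gamma^{K_\omega}$ and the sup-norm bound $\|\tilde\psi_\omega\|_\infty \le \|\psi\|_\infty$, the Leibniz rule delivers $\tilde\psi_\omega h_\omega \in \mF_\gamma^{K_\omega}$ with constants proportional to $\|\psi\|_{C^\eta}$, closing the argument; accommodating $\tilde\vp \in L^\infty$ on the other side is routine, since in the coupling approach underlying Theorem \ref{thm:DC} the observable $\vp$ enters only through its sup-norm.
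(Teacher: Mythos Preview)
Your approach mirrors the paper's almost exactly: verify (P1)--(P7) via Proposition~\ref{prop:applic_general} and the computations \eqref{eq:upperxnomega}, \eqref{eq:n1}, \eqref{step2}, invoke Theorems~\ref{thm:acsm} and~\ref{thm:DC}, and transfer the estimates to $[0,1]$ by lifting observables through $\pi_\omega$ and showing $\tilde\psi_\omega h_\omega\in\mF_\gamma^{K_\omega}$ via a Leibniz estimate.

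One inaccuracy to fix: it is \emph{not} true that every $f_\alpha$ expands by a factor of at least $2$ on $(0,\tfrac12]$; indeed $f_\alpha'(0)=1$, which is the whole point of the neutral fixed point. What is true (and what the paper uses) is that the \emph{induced} return map satisfies $|(f^{R_\omega}_\omega)'|\ge 2$, because the first step out of $\Lambda=(\tfrac12,1]$ is the right branch with derivative exactly $2$ and all subsequent left-branch steps have derivative $\ge 1$. This gives $|x-y|\le 2^{-s(x,y)}$ on the base. For points $(x,\ell),(y,\ell)$ at a positive level, observe that $f^{R-\ell}$ takes $\pi_\omega(x,\ell)$ to $(F^{R})(x)\in\Lambda$ with derivative $\ge 1$, so $|\pi_\omega(x,\ell)-\pi_\omega(y,\ell)|\le |(F^{R})(x)-(F^{R})(y)|\le 2^{-(s-1)}$, and your Lipschitz bound (hence the constraint $2^\eta\gamma\ge 1$) follows as written.
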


\begin{proof}
Proposition \ref{prop:applic_general} establishes conditions (P1) - (P3) and (P5). 
Since we are assuming (A1), condition (P4) follows from equations \eqref{eq:R_for_xn} and \eqref{eq:upperxnomega}
with constants $b=\frac{q}{\alpha_0}, ~a = \frac{1}{\alpha_0}, ~C=c(\nu)^{-\frac{1}{\alpha_0}}$. The second condition in (P4) holds because of equation \eqref{eq:n1}. \added[id=WB]{(P7) is verified in \eqref{step2}}.  

Theorem \ref{thm:acsm} therefore applies and gives existence and mixing of the sample stationary measures $\mu_\omega$.   Finally we apply Theorem \ref{thm:DC} to obtain the decay of correlations. 

 For $\vp\in L^\infty([0,1])$ and $\psi \in  C^\eta([0,1])$ define  $\bpsi =\psi\circ \pi_\omega, \bvp=\vp\circ \pi_\omega:\Dom \to \mathbb R$. Then we have  $\int (\vp\circ f^{n}_\omega)\psi d\mu_\omega= \int (\bvp\circ F^n_\omega)\bpsi h_\omega dm $.  Now, to apply Theorem \ref{thm:DC} it is sufficient to show  $\bpsi h_\omega\in \mF^{K_\omega}_\gamma$ and $\bvp\in \mL_\infty^{K_\omega}$. The latter is obvious, since 
the projection $\pi_\omega$ is 
nonsingular and hence, for $\nu_\omega-$  a.e.\ 
$(x, \ell)$ we have  $|\bvp(x, \ell)| \le \|\vp\|_{L^\infty}$. 
For the former one we first note that since $|(f^{R_\omega}_\omega)'|\geq2$ we have $|x-y| \le (\frac{1}{2})^{s(x, y)}$. Hence,  for any $(x, \ell), (y, \ell)\in \Dom$ we have the inequality 
\begin{equation}\label{eq:step1}
|\bpsi (x, \ell)- \bpsi (y, \ell)| \le\|\psi \|_\eta |x-y |^\eta \leq \|\psi\|_\eta ({\frac12})^{\eta\cdot s(x,y)}\le\|\psi\|_\eta(\frac{1}{2^\eta\gamma})^{s(x,y)}\gamma^{s(x,y)}.
\end{equation}
Now since $s((x, \ell), (y, \ell)) = s(x,y)$  the inequality \eqref{eq:step1} implies
\begin{align*}
&|(\bpsi h_\omega)(x, \ell)- (\bpsi h_\omega)(y, \ell)| \le {\|\psi\|}_{\infty} {\|h\|}_{\mF_\gamma^{K_\omega}}\gamma^{s((x, \ell), (y, \ell))}   
\\& + {\|h_\omega\|}_{\mL_\infty}\|\psi\|_\eta\gamma^{s((x, \ell), (y, \ell))}.
\end{align*}
 
\end{proof}

\subsection{Sharp asymptotics on the measure of return-time intervals}

Although we will not need lower bound estimates on the $x_n(\omega)$ to prove the main results in this 
paper, it is not difficult to identify conditions (see Assumption (A2) below) under which the upperbounds from the previous section are sharp.  This condition will hold for all of the examples discussed in this paper. 

Notice that from equation (\ref{eq:upperxnomega}) and the summability derived in 
equation (\ref{eq:n1}), an application of Borel-Cantelli yields, for almost every $\omega$,  
$$\liminf_\ell  \frac{(\log\ell)^q}{\ell[x_{\ell}(\omega)]^{\alpha_0}}  \geq c.$$
Keeping in mind that $0<c<c(\nu)$ was arbitrary (and working through a sequence of 
choices $c$ increasing to $c(\nu)$, applying Borel-Cantelli at each step) we obtain a set 
$\Omega_2 \subseteq \Omega$ of full $P-$measure such that for every 
$\omega \in \Omega_2 $ we have

\begin{equation}\label{eq:limsupxnomega}
\limsup_\ell \frac {\ell^{\frac{1}{\alpha_0}} x_\ell( \omega)}{[\log \ell]^{\frac{q}{\alpha_0}}}
\leq \frac{1}{[c(\nu)]^{\frac{1}{\alpha_0}}}.
\end{equation}

\deleted[id=CB]{Although the above expression appears to depend on the parameter $n$, we may remove this dependence by taking a suitable intersection over full 
measure subsets of $\Omega$.  That is, we have shown that equation (\ref{eq:limsupxnomega}) holds for every $n$ and almost every $\omega$. }

Now we concentrate on deriving lower bounds.   Using definition \eqref{def:lsv} and 
the estimate $(1 + x)^{-\alpha} \geq 1 - \alpha x$, valid for  
$\alpha>0,~ x \geq 0$, and by substituting $x = [2x_n(\omega)]^{\alpha(\omega)}$ we obtain
\begin{equation}\label{eq:onestepl}\begin{split}
\frac{1}{[x_{n}(\omega)]^{\alpha_0}} -\frac{1}{[x_{n-1}(\sigma \omega)]^{\alpha_0}} &\leq 
\alpha_0 2^{\alpha_0} [2x_n(\omega)]^{\alpha(\omega) - \alpha_0}.\\
\end{split}
\end{equation}
Iterating \eqref{eq:onestepl} we obtain
\begin{equation}\begin{split}\label{eq:xnsigma_n_lower}
\frac{(\log \ell)^q}{\ell [x_{\ell}(\omega)]^{\alpha_0}} \le \frac{(\log \ell)^q}{\ell}2^{\alpha_0} &+ \alpha _02^{\alpha_0}\frac{(\log \ell)^q}{\ell} 
\sum_{k=2}^\ell[2x_k(\sigma^{\ell-k}\omega)]^{\alpha(\sigma^{\ell-k}\omega) - \alpha_0}\\
= \frac{(\log \ell)^q}{\ell} 2^{\alpha_0} &+ \alpha _02^{\alpha_0} \frac{(\log \ell)^q}{\ell}
\sum_{k=2}^{\lfloor \sqrt \ell \rfloor}[2x_k(\sigma^{\ell-k}\omega)]^{\alpha(\sigma^{\ell-k}\omega) - \alpha_0}\\
+\alpha _02^{\alpha_0}\frac{(\log \ell)^q(\ell-\lfloor \sqrt \ell \rfloor)}{\ell \log(\ell - \lfloor \sqrt \ell \rfloor)^q}&\cdot\frac{\log(\ell - \lfloor \sqrt \ell \rfloor)^q}{\ell - \lfloor \sqrt \ell \rfloor}\sum_{k=\lfloor \sqrt \ell \rfloor +1}^ \ell[2x_k(\sigma^{\ell-k}\omega)]^{\alpha(\sigma^{\ell-k}\omega) - \alpha_0}\\
=(I) &+ (II) + (III).\\
\end{split}
\end{equation}
Clearly $(I) = o(1)$ and the same is true of $(II)$ since  
$$(II) \leq \alpha _02^{\alpha_0} \frac{(\log \ell)^q}{\lfloor \sqrt \ell \rfloor}.$$
In order to estimate $(III)$ note that from equation \eqref{eq:limsupxnomega}, for all $\ell$ sufficiently large (depending on $\omega$), for $ \lfloor \sqrt \ell \rfloor + 1 \leq k \leq \ell$, 
$$ \alpha _0 2^{\alpha_0}[2x_k(\sigma^{\ell-k}\omega)]^{\alpha(\sigma^{\ell-k}\omega) - \alpha_0} \leq \alpha _0 2^{\alpha_0}\left[3\frac{(\log k)^{\frac{q}{\alpha_0}}}{[c(\nu)]^{\frac{1}{\alpha_0}} k^{\frac{1}{\alpha_0}}}\right]^{\alpha(\sigma^{\ell-k}\omega) - \alpha_0} =: A^\prime_k(\omega).$$

From now on we write $A'_k:=A'_k(\omega)$. In addition to Assumption (A1) we now assume\footnote{We will see that for many examples, including the ones presented in the next section, Assumption (A2) will hold.} the following asymptotics on the $E_\nu(A_k^\prime)$:\\

\bigskip

\noindent {\bf Assumption (A2) (Asymptotics on expectations revisited)}
\begin{equation}\label{eq:Akprime}
\frac{\log(\ell - \lfloor \sqrt \ell \rfloor)^q}{\ell - \lfloor \sqrt \ell \rfloor}\sum_{k=\lfloor \sqrt \ell \rfloor +1}^ \ell E_\nu(A_k^\prime) \rightarrow c(\nu).
\end{equation} 
\smallskip


Another large deviations estimate as in the preceding section will give, for each $c(\nu) < c$ an
integer $N_2 = N_2(c)$ so that for all $\ell > N_2$,  
$\frac{\log(\ell - \lfloor \sqrt \ell \rfloor)^q}{\ell - \lfloor \sqrt \ell \rfloor}\sum_{k=\lfloor \sqrt \ell \rfloor +1}^ \ell E_\nu(A_k^\prime) \leq \frac{c+c(\nu)}{2}$ and 
$$P\left\{\frac{\log(\ell - \lfloor \sqrt \ell \rfloor)^q}{\ell - \lfloor \sqrt \ell \rfloor} \sum_{k= \lfloor \sqrt \ell \rfloor +1}^{ \ell }A_k^\prime > c \right\}
\leq \exp\left[-\frac{(\ell - \lfloor \sqrt \ell \rfloor) [c-c(\nu) ]^2}{8r_0 (\log ( \ell - \lfloor \sqrt \ell \rfloor)^{2q}}\right].
$$
Once again, application of Borel-Cantelli implies there exists a random variable $n_2(\omega)$, finite almost everywhere,  such that 
for all $\ell > n_2(\omega)$
\begin{equation}\label{eq:xnloweromega} 
\frac{(\log \ell)^q}{\ell [x_{\ell}(\omega)]^{\alpha_0}}  \leq c^\prime
\end{equation}
and for each $0<v<1$ 
there are constants $u>0$ and $C < \infty$ so that 
$$P\left\{ n_2( \omega) > n  \right\} \leq Ce^{-un^v}. $$
The factor $c^\prime> c$ in equation (\ref{eq:xnloweromega}) is necessary to account for the two  $o(1)$ terms $(I)$ and $(II)$ in equation (\ref{eq:xnsigma_n_lower}). 
Returning to equation (\ref{eq:xnloweromega}), another sequence of Borel-Cantelli reductions
over the parameter $c^\prime$ decreasing to $c(\nu)$ gives a full measure set $\Omega_2 \subseteq \Omega$ such that 
for every $\omega \in \Omega_2$
\begin{equation}\label{eq:liminfxnomega}
\liminf_\ell \frac {\ell^{\frac{1}{\alpha_0}} x_\ell( \omega)}{[\log \ell]^{\frac{q}{\alpha_0}}}
\geq \frac{1}{[c(\nu)]^{\frac{1}{\alpha_0}}}.
\end{equation}
\deleted[id=CB]{The dependence of this expression on $n$ can be removed by taking a further intersection over 
full measure sets as was done for the upper bounds at the beginning of this section. } We have therefore established the following
(fibre-wise, or quenched) exact asymptotics

\begin{proposition} \label{prop:exact_asympt}
For random LSV maps as described in Theorem \ref{thm:lsv_main}, assuming asymptotic 
growth conditions  (\ref{eq:Ak_asympt}) and  (\ref{eq:Akprime}), we have the following exact asymptotics:
There is a full measure subset $\Omega^\prime \subseteq \Omega$ such that for all $\omega \in \Omega^\prime$, for all $n=0, 1, 2, \dots$ we have
$$x_\ell(\omega)\sim \left[\frac{(\log\ell)^q}{c(\nu)\ell}\right]^{\frac{1}{\alpha_0}}.$$\end{proposition}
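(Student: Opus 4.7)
The plan is to assemble the two one-sided bounds that the preceding discussion has already produced. The proposition is exactly the assertion that the $\limsup$ in \eqref{eq:limsupxnomega} and the $\liminf$ in \eqref{eq:liminfxnomega} agree with common value $[c(\nu)]^{-1/\alpha_0}$, so once both bounds are in hand the argument reduces to intersecting the two full measure sets on which they are valid and rewriting the two-sided bound in the $\sim$ notation of the statement.

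First I would invoke the Hoeffding--Borel-Cantelli reduction already carried out for the upper bound: assumption (A1) pins the asymptotic mean of $\frac{(\log\ell)^q}{\ell}\sum_{k=1}^\ell A_k$ to $c(\nu)$, the large deviation estimate \eqref{eq:large_deviations} is summable in $\ell$, so for each fixed $c<c(\nu)$ the event $\{\frac{(\log\ell)^q}{\ell}\sum_{k=1}^\ell A_k < c\}$ occurs only finitely often almost surely. Combined with \eqref{xnomega} this yields \eqref{eq:limsupxnomega} after intersecting over a countable sequence $c_m \uparrow c(\nu)$. For the lower bound I would repeat the same scheme with $A_k'$ in place of $A_k$ and assumption (A2) in place of (A1), applying Hoeffding to the tail sum $\sum_{k=\lfloor\sqrt\ell\rfloor+1}^\ell A_k'$ appearing in \eqref{eq:xnsigma_n_lower}. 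The leading terms $(I)$ and $(II)$ there are $o(1)$ and are absorbed into the slack constant $c'>c$ in \eqref{eq:xnloweromega}, and a further Borel-Cantelli reduction over a sequence $c_m'\downarrow c(\nu)$ produces \eqref{eq:liminfxnomega} on a full measure set.

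Setting $\Omega'$ to be the intersection of the two full measure sets produced above, for every $\omega\in\Omega'$ one has
$$
\limsup_\ell \frac{\ell^{1/\alpha_0}\, x_\ell(\omega)}{[\log \ell]^{q/\alpha_0}} \;\le\; [c(\nu)]^{-1/\alpha_0} \;\le\; \liminf_\ell \frac{\ell^{1/\alpha_0}\, x_\ell(\omega)}{[\log \ell]^{q/\alpha_0}},
$$
which is precisely $x_\ell(\omega)\sim [(\log\ell)^q/(c(\nu)\ell)]^{1/\alpha_0}$. The hard part is not estimation but bookkeeping: the finite-time thresholds $n_1(\omega)$ and $n_2(\omega)$ depend on the auxiliary parameter $c$, so both Borel-Cantelli steps must be executed along a countable sequence of thresholds converging to $c(\nu)$ (from below for the upper bound, from above for the lower bound) in order to remove that dependence and produce a single $\omega$-set of full measure on which the asymptotic holds.
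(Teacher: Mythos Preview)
Your proposal is correct and follows essentially the same route as the paper: the proposition is stated immediately after the derivations of \eqref{eq:limsupxnomega} and \eqref{eq:liminfxnomega}, and the paper treats it as a summary of those two Hoeffding--Borel--Cantelli arguments (under (A1) and (A2) respectively), intersecting the resulting full-measure sets. Your remark about running each Borel--Cantelli step along a countable sequence of thresholds $c_m\uparrow c(\nu)$ and $c_m'\downarrow c(\nu)$ to remove the dependence on the auxiliary constant is exactly the bookkeeping the paper performs in the text preceding the proposition.
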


\subsection{Natural probability distributions on the parameter space}
In this subsection we verify assumptions (A1) and (A2) for some natural probability distributions $\nu$ on 
$[\alpha_0, \alpha_1]$.
\subsubsection{Example:  Discrete distribution. } 
Here we assume $\nu$ is a discrete probability distribution; for concreteness, 
$\nu = p_1 \delta_{\alpha_0} + p_2 \delta_{\alpha_1}$ with $p_i >0$ and  
$p_1 + p_2 = 1$.  

\begin{lemma} 
$\frac{1}{\ell}\sum_{k=1}^\ell E_\nu A_k \rightarrow \alpha_02^{\alpha_0}p_1.  $
Therefore, in condition (\ref{eq:Ak_asympt}) we can take $q:=0$ and 
$c(\nu) :=\alpha_02^{\alpha_0} p_1 > 0$.
\end{lemma}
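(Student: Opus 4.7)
The plan is to compute $E_\nu A_k$ explicitly, observe that it converges to the proposed limit as $k\to\infty$, and then invoke Cesàro averaging. Under the discrete distribution $\nu = p_1\delta_{\alpha_0} + p_2\delta_{\alpha_1}$, the random variable $\alpha(\sigma^{\ell-k}\omega)$ takes only two values, so the expectation of $A_k$ splits cleanly into two cases.

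First I would substitute each value of $\alpha(\sigma^{\ell-k}\omega)$ into the defining expression \eqref{eq:Ak} for $A_k$. When $\alpha(\sigma^{\ell-k}\omega)=\alpha_0$, the first bracket in \eqref{eq:Ak} has exponent zero and contributes exactly $\alpha_02^{\alpha_0}$, while the second bracket has exponent $\alpha_0>0$ and, since $c_k(\alpha_1)\to c(\alpha_1)$ is bounded, tends to $0$ at rate $k^{-\alpha_0/\alpha_1}$. When $\alpha(\sigma^{\ell-k}\omega)=\alpha_1$, both brackets carry strictly positive exponents ($\alpha_1-\alpha_0$ and $2\alpha_1-\alpha_0$ respectively) and hence vanish as $k\to\infty$, since $c_k(\alpha_0),c_k(\alpha_1)$ are bounded and $k^{1/\alpha_0},k^{1/\alpha_1}\to\infty$.

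Combining the two cases with weights $p_1$ and $p_2$, I would obtain
\begin{equation*}
E_\nu A_k = p_1\alpha_02^{\alpha_0} + \varepsilon_k,
\end{equation*}
where $\varepsilon_k\to 0$ as $k\to\infty$ (in fact at a polynomial rate). An application of the Cesàro theorem then gives
\begin{equation*}
\frac{1}{\ell}\sum_{k=1}^{\ell} E_\nu A_k \longrightarrow p_1\alpha_02^{\alpha_0},
\end{equation*}
which is precisely assumption (A1) with $q=0$ and $c(\nu)=p_1\alpha_02^{\alpha_0}$. Positivity of $c(\nu)$ follows from $p_1>0$.

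I do not expect any serious obstacle: once the two values of $\alpha(\sigma^{\ell-k}\omega)$ are plugged in, the decay of the error terms is immediate from the polynomial factors $k^{-1/\alpha_0}$ and $k^{-1/\alpha_1}$ appearing in the brackets, together with the convergence of the sequences $c_k(\amin),c_k(\amax)$ to finite positive limits established earlier. The only detail worth recording is the uniform boundedness of these $c_k$'s, which justifies treating the prefactors as $O(1)$ when controlling $\varepsilon_k$.
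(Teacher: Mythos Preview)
Your proposal is correct and follows essentially the same approach as the paper: compute $E_\nu A_k$ by splitting on the two values $\alpha_0,\alpha_1$ of the discrete distribution, identify the constant contribution $p_1\alpha_02^{\alpha_0}$ from the $\alpha=\alpha_0$ case, and show the remaining terms vanish. The only cosmetic difference is that the paper decomposes $A_k=X_k-Y_k$ and records explicit polynomial error rates $O(\ell^{-\zeta})$ for the Ces\`aro averages, whereas you appeal to the Ces\`aro theorem after establishing $E_\nu A_k\to p_1\alpha_02^{\alpha_0}$; both routes are equivalent here.
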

\begin{proof} 
Write  $A_k(\omega)= X_k(\omega) - Y_k(\omega)$ where
\begin{equation}\label{eq:Ak=Xk-Yk}\begin{split}
X_k(\omega) &=\alpha_02^{\alpha_0} \left[\frac{2c_k(\alpha_0)}{k^{\frac{1}{\alpha_0}}}\right]^{\alpha(\sigma^{n-k}\omega) - \alpha_0},\\
Y_k(\omega) &= \frac{1+ \alpha_0}{2} \alpha_02^{\alpha_0}\left[\frac{2c_k(\alpha_1)}{k^{\frac{1}{\alpha_1}}}\right]^{2\alpha(\sigma^{n-k}\omega) - \alpha_0}.\\
\end{split}
\end{equation}
Using $\sigma$-invariance of $P$, a direct calculation shows 
$$
E_\nu (X_k) = \alpha_02^{\alpha_0}\left\{ p_1 + p_2\left[\frac{2c_k(\alpha_0)}{k^{\frac{1}{\alpha_0}}}\right]^{\alpha_1-\alpha_0}\right\},
$$
while 
$$
E_\nu Y_k = \frac{1+ \alpha_0}{2} \alpha_02^{\alpha_0}\left\{ p_1\left[\frac{2c_k(\alpha_1)}{k^{\frac{1}{\alpha_1}}}\right]^{ \alpha_0} + p_2\left[\frac{2c_k(\alpha_1)}{k^{\frac{1}{\alpha_1}}}\right]^{2\alpha_1 - \alpha_0}
\right\}.
$$
Therefore, 
$$\frac{1}{\ell-1}\sum_{k=2}^\ell  E_\nu X_k = \alpha_02^{\alpha_0} p_1 + O(\ell^{ 1- \frac{\alpha_1}{\alpha_0}}),$$
whereas
$$\frac{1}{\ell-1}\sum_{k=2}^\ell  E_\nu Y_k =  O(\ell^{ -\delta })$$
for $\delta=\min\{\alpha_0/\alpha_1, 2-\alpha_0/\alpha_1\}>0$. 
It follows that 
$$ \frac{1}{\ell}\sum_{k=1}^\ell E_\nu A_k =  \frac{\ell -1}{\ell} \alpha_02^{\alpha_0} p_1 + O(\ell^{ -\zeta}),$$
where $\zeta:= \min\{ \delta, 1- \frac{\alpha_1}{\alpha_0}\} > 0$.  Therefore 
Assumption (\ref{eq:Ak_asympt}) is satisfied by taking $q:=0$ and 
$c(\nu) := \alpha_0 2^{\alpha_0}p_1 >0$.
\end{proof}

We now show the upperbound on the $x_\ell( \omega)$ obtained above is sharp. 
\begin{proposition}[Sharp asymptotics for the discrete probability distribution] For almost every $\omega$
$$x_\ell(\omega)\sim \left[\frac{1}{c(\nu)\ell}\right]^{\frac{1}{\alpha_0}},$$
where $c(\nu)=\alpha _02^{\alpha_0}p_1$. 
\end{proposition}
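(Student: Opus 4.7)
The plan is to reduce everything to Proposition \ref{prop:exact_asympt}. That proposition guarantees, under hypotheses (A1) and (A2), the sharp asymptotic $x_\ell(\omega)\sim\left[\frac{(\log\ell)^q}{c(\nu)\ell}\right]^{1/\alpha_0}$ on a full measure set. The preceding lemma has already verified (A1) for the discrete distribution with $q=0$ and $c(\nu)=\alpha_0 2^{\alpha_0}p_1$. Substituting $q=0$ into the conclusion of Proposition \ref{prop:exact_asympt} gives exactly the target asymptotic $x_\ell(\omega)\sim[c(\nu)\ell]^{-1/\alpha_0}$, so the only remaining work is to confirm (A2) with the same parameters $q=0$ and $c(\nu)=\alpha_0 2^{\alpha_0}p_1$.

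To verify (A2), I would first specialize the defining expression of $A_k^\prime$ to $q=0$, which gives
\[
A_k^\prime(\omega)=\alpha_0 2^{\alpha_0}\left[\frac{3}{[c(\nu)]^{1/\alpha_0}k^{1/\alpha_0}}\right]^{\alpha(\sigma^{\ell-k}\omega)-\alpha_0}.
\]
Using $\sigma$-invariance of $P$ and the two-atom structure of $\nu$, the expectation $E_\nu A_k^\prime$ splits as $\alpha_0 2^{\alpha_0}\{p_1+p_2\cdot T_k\}$, where $T_k$ arises from the atom at $\alpha_1$ and decays like $k^{-(\alpha_1-\alpha_0)/\alpha_0}$. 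The computation is essentially identical to the one performed for $E_\nu X_k$ in the preceding lemma.

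The second step is Cesaro-averaging $E_\nu A_k^\prime$ over $\lfloor\sqrt\ell\rfloor+1\le k\le\ell$. The constant contribution $\alpha_0 2^{\alpha_0}p_1$ survives verbatim, while the decaying contribution averages to zero since $\alpha_1>\alpha_0$. This yields
\[
\frac{1}{\ell-\lfloor\sqrt\ell\rfloor}\sum_{k=\lfloor\sqrt\ell\rfloor+1}^{\ell}E_\nu A_k^\prime\longrightarrow \alpha_0 2^{\alpha_0}p_1=c(\nu),
\]
which is (A2) in the $q=0$ case. Invoking Proposition \ref{prop:exact_asympt} concludes the proof.

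The main obstacle is essentially cosmetic: the verification of (A2) is a direct mirror of the (A1) verification in the preceding lemma, differing only by the replacement of $c_k(\alpha_0)/k^{1/\alpha_0}$ with $3/([c(\nu)]^{1/\alpha_0}k^{1/\alpha_0})$ and the absence of a $Y_k$-type correction. Since $q=0$, no logarithmic weights complicate the Cesaro average, and the error term's summability is controlled by the gap $\alpha_1-\alpha_0>0$ exactly as before.
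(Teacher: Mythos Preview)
Your proposal is correct and follows essentially the same route as the paper: verify (A2) for the discrete distribution by computing $E_\nu A_k^\prime$ via the two-atom structure, observe that the $p_1$ contribution is constant while the $p_2$ contribution decays like $k^{-(\alpha_1-\alpha_0)/\alpha_0}$, and conclude by invoking Proposition~\ref{prop:exact_asympt} with $q=0$. The paper's proof is slightly terser but structurally identical.
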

\begin{proof}
We only need to verify Assumption (A2) and apply Proposition \ref{prop:exact_asympt}. 
\begin{equation*}\begin{split}
\frac{\log(\ell - \lfloor \sqrt \ell \rfloor)^q}{\ell - \lfloor \sqrt \ell \rfloor}\sum_{k=\lfloor \sqrt \ell \rfloor +1}^ \ell E_\nu(A_k^\prime)
&= \frac{\alpha _02^{\alpha_0}}{\ell - \lfloor \sqrt \ell \rfloor}\sum_{k=\lfloor \sqrt \ell \rfloor +1}^ \ell p_1 + p_2\left[ \frac{1}{c(\nu)k}
\right]^{\frac{\alpha_1}{\alpha_0} - 1} \\
&= \alpha _02^{\alpha_0}p_1 + O\left(\ell^{\frac{\alpha_0-\alpha_1}{2\alpha_0}}\right).\\
\end{split}
\end{equation*}
Since $\alpha _02^{\alpha_0}p_1=c(\nu)$ we have verified Assumption (A2). 
\end{proof}

\subsubsection{Example: Uniform distribution. } 
Here we assume $\nu$ is the uniform probability distribution on 
$[\alpha_0, \alpha_1]$, that is, for $\alpha_0 < t < \alpha_1$, 
$$\nu([\alpha_0, t]) = \frac{1}{\alpha_1 - \alpha_0}(t - \alpha_0).$$
We start with a lemma that will allow us to compute the appropriate expectations 
in condition (\ref{eq:Ak_asympt}). 

\begin{lemma}\label{normalization} 
Let $c\ge 1$. Then, as $u \rightarrow \infty$
\begin{equation}
E_\nu \left[e^{-(c\alpha(\omega)-\alpha_0)u}\right]\sim
\frac{1}{\alpha_1-\alpha_0}\cdot\frac{1}{cu}\cdot e^{-(c-1)\alpha_0u}. 
\end{equation}
\end{lemma}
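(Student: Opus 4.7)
The plan is to evaluate the expectation explicitly using the uniform density and then extract the leading-order asymptotics by a standard Laplace-type estimate. Since $\alpha(\omega)$ is uniformly distributed on $[\alpha_0,\alpha_1]$ under $\nu$, I would first write
\begin{equation*}
E_\nu\left[e^{-(c\alpha(\omega)-\alpha_0)u}\right] = \frac{1}{\alpha_1-\alpha_0}\int_{\alpha_0}^{\alpha_1} e^{-(c\alpha-\alpha_0)u}\,d\alpha.
\end{equation*}

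Next I would perform the linear change of variables $s=c\alpha-\alpha_0$, with $ds = c\,d\alpha$, sending the interval $[\alpha_0,\alpha_1]$ to $[(c-1)\alpha_0,\,c\alpha_1-\alpha_0]$. The integral then becomes
\begin{equation*}
\frac{1}{(\alpha_1-\alpha_0)\,c}\int_{(c-1)\alpha_0}^{c\alpha_1-\alpha_0} e^{-su}\,ds = \frac{1}{(\alpha_1-\alpha_0)\,cu}\left[e^{-(c-1)\alpha_0 u} - e^{-(c\alpha_1-\alpha_0)u}\right].
\end{equation*}

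Finally, I would observe that since $c\ge 1$ and $\alpha_0<\alpha_1$, we have $(c-1)\alpha_0 < c\alpha_1-\alpha_0$ (the gap equals $c(\alpha_1-\alpha_0)>0$), so the second exponential is smaller than the first by a factor $e^{-c(\alpha_1-\alpha_0)u}=o(1)$ as $u\to\infty$. Factoring $e^{-(c-1)\alpha_0 u}$ from the bracket yields
\begin{equation*}
E_\nu\left[e^{-(c\alpha(\omega)-\alpha_0)u}\right] = \frac{e^{-(c-1)\alpha_0 u}}{(\alpha_1-\alpha_0)\,cu}\bigl(1 - e^{-c(\alpha_1-\alpha_0)u}\bigr),
\end{equation*}
which is precisely the claimed asymptotic equivalence. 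There is essentially no obstacle here: the computation is just a change of variables plus the remark that one of the two resulting exponentials dominates the other. The only point worth flagging is the verification that $(c-1)\alpha_0 < c\alpha_1-\alpha_0$, which is why the hypothesis $c\ge 1$ (combined with $\alpha_0<\alpha_1$) suffices.
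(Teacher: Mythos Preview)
Your proof is correct and essentially identical to the paper's: both write the expectation as the uniform integral $\frac{1}{\alpha_1-\alpha_0}\int_{\alpha_0}^{\alpha_1}e^{-(cx-\alpha_0)u}\,dx$, evaluate it explicitly, and factor out $e^{-(c-1)\alpha_0 u}$ to isolate the $1-e^{-c(\alpha_1-\alpha_0)u}$ term. The only cosmetic difference is that you make the substitution $s=c\alpha-\alpha_0$ explicit, while the paper integrates directly.
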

\begin{proof}
We have 
\begin{align*}
E_\nu \left[e^{-(c\alpha(\omega)-\alpha_0)u}\right]&=
\frac{1}{\alpha_1-\alpha_0}\int_{\alpha_0}^{\alpha_1}e^{-(cx-\alpha_0)u}dx\\
&=\frac{1}{\alpha_1-\alpha_0}\cdot\frac{1}{cu}\left[e^{-(c-1)\alpha_0u}-e^{-(c\alpha_1-\alpha_0)u}\right]\\
&= \frac{1}{\alpha_1-\alpha_0}\cdot\frac{1}{cu}e^{-(c-1)\alpha_0u}\left[ 
1-e^{-c(\alpha_1 - \alpha_0)u}\right ].\\
\end{align*}
\end{proof}
As in the previous section, we decompose $A_k(\omega) := X_k(\omega) -
Y_k(\omega)$ according to equation (\ref{eq:Ak=Xk-Yk}). 

Using $\sigma$ invariance of $P$  and Lemma \ref{normalization}
with $u= -\log\left(\frac{2c_k(\alpha_0)}{k^{\frac{1}{\alpha_0}}}\right), ~ c=1$ and 
$u= -\log\left(\frac{2c_k(\alpha_1)}{ k^{\frac{1}{\alpha_1}}}\right),~ c=2$, respectively, we obtain
\begin{equation}\begin{split}
E_\nu (X_k(\omega) ) &\sim \frac{\alpha_0^2 2^{\alpha_0}}{\alpha_1-\alpha_0} \frac{1}{\log k},\\
E_\nu (Y_k(\omega) )&= o(\log k).\\
\end{split}
\end{equation}
It follows that  $\log k \cdot E_\nu A_k \sim \frac{\alpha_0^2 2^{\alpha_0}}{\alpha_1 - \alpha_0}$. 
Now apply Lemma \ref{appendix2} of the Appendix to compute the asymptotics for the sum:
$$ \frac {\log \ell}{\ell} \sum_{k=1}^\ell  E_\nu A_k \rightarrow \frac{\alpha_0^2 2^{\alpha_0}}{\alpha_1 - \alpha_0}.$$
Therefore we can take $q=1$ and $c(\nu) = \frac{\alpha_0^2 2^{\alpha_0}}{\alpha_1 - \alpha_0}$ 
in Assumption (A1)

\begin{proposition}[Sharp asymptotics for the uniform probability distribution]
For almost every $\omega$
$$x_\ell(\omega)\sim \left[\frac{\log \ell}{c(\nu)\ell}\right]^{\frac{1}{\alpha_0}},$$
where $c(\nu) = \frac{\alpha_0^2 2^{\alpha_0}}{\alpha_1 - \alpha_0}$.
\end{proposition}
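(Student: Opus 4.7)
The plan is to mirror the strategy used for the discrete distribution: verify Assumption (A2) with the relevant parameters $q=1$ and $c(\nu)=\frac{\alpha_0^2 2^{\alpha_0}}{\alpha_1-\alpha_0}$, and then invoke Proposition \ref{prop:exact_asympt} directly. The existence of $c(\nu)$ and the value of $q$ have already been established in the paragraphs preceding the proposition when verifying Assumption (A1), so no further work is required there.

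To verify (A2), I would first compute the expectation $E_\nu A_k^\prime$ for each $k$ using the explicit formula for $A_k^\prime$ from equation above and the uniform law of $\alpha$. After pulling the $\sigma$-shift out via $\sigma$-invariance of $P$, this reduces to an integral of the form $E_\nu[e^{-(\alpha(\omega)-\alpha_0)u_k}]$ where $u_k = -\log\bigl[3\,(\log k)^{1/\alpha_0} \bigl(c(\nu)k\bigr)^{-1/\alpha_0}\bigr]$. Since $u_k \sim \frac{\log k}{\alpha_0}$ as $k\to\infty$, Lemma \ref{normalization} (with $c=1$) gives
\[
E_\nu A_k^\prime \;\sim\; \alpha_0 2^{\alpha_0}\cdot\frac{1}{\alpha_1-\alpha_0}\cdot\frac{1}{u_k} \;\sim\; \frac{\alpha_0^2 2^{\alpha_0}}{(\alpha_1-\alpha_0)\log k}.
\]
The remaining constant prefactor $3^{\alpha(\omega)-\alpha_0}$ is bounded between $1$ and $3^{\alpha_1-\alpha_0}$ and contributes only a multiplicative factor tending to $1$ on average, which can be absorbed into the $\sim$ asymptotic.

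Next, with $q=1$, I substitute this estimate into the Ces\`aro average appearing in (A2) and apply Lemma \ref{appendix2} from the Appendix to the sum $\sum_{k=\lfloor\sqrt\ell\rfloor+1}^{\ell} \frac{1}{\log k}$. The contribution of terms with $k\le \lfloor\sqrt\ell\rfloor$ is negligible on the normalized scale $\frac{\log(\ell-\lfloor\sqrt\ell\rfloor)}{\ell-\lfloor\sqrt\ell\rfloor}$, so this yields
\[
\frac{\log(\ell-\lfloor\sqrt\ell\rfloor)}{\ell-\lfloor\sqrt\ell\rfloor}\sum_{k=\lfloor\sqrt\ell\rfloor+1}^{\ell} E_\nu A_k^\prime \;\longrightarrow\; \frac{\alpha_0^2 2^{\alpha_0}}{\alpha_1-\alpha_0} = c(\nu),
\]
which is exactly (A2). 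The proposition then follows from Proposition \ref{prop:exact_asympt}.

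The main subtlety will be in handling the correction $u_k = \frac{1}{\alpha_0}\log k - \log\bigl(3 c(\nu)^{-1/\alpha_0}\bigr) - \frac{1}{\alpha_0}\log\log k$, making sure that the lower-order terms in $u_k$ do not affect the leading asymptotic in Lemma \ref{normalization}, and in checking that the $Y_k$-analog contribution (coming from the second term in the definition of $A_k^\prime$, if one were present) is of smaller order --- in this setting the definition of $A_k^\prime$ already absorbs the relevant estimate, so the main control reduces to the $X_k^\prime$-type computation above. Everything else is a routine application of the tools already assembled in the section.
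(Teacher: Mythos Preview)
Your proposal is correct and follows essentially the same route as the paper: verify Assumption (A2) by computing $E_\nu A_k'$ via Lemma~\ref{normalization} (with $c=1$, yielding the $\frac{\alpha_0^2 2^{\alpha_0}}{(\alpha_1-\alpha_0)\log k}$ asymptotic), then handle the truncated Ces\`aro sum $\sum_{k=\lfloor\sqrt\ell\rfloor+1}^{\ell}\frac{1}{\log k}$ using Lemma~\ref{appendix2}, and finally invoke Proposition~\ref{prop:exact_asympt}. One small redundancy: your ``remaining constant prefactor $3^{\alpha(\omega)-\alpha_0}$'' is already absorbed into your $u_k$, so that sentence can be dropped; otherwise the argument matches the paper's proof line for line.
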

\begin{proof}
We verify Assumption (A2) and apply Proposition \ref{prop:exact_asympt}. 
\begin{equation*}\begin{split}
\frac{\log(\ell - \lfloor \sqrt \ell \rfloor)}{\ell - \lfloor \sqrt \ell \rfloor}\sum_{k=\lfloor \sqrt \ell \rfloor +1}^ \ell E_\nu(A_k^\prime)
&= \alpha _02^{\alpha_0}\frac{\log(\ell - \lfloor \sqrt \ell \rfloor)}{\ell - \lfloor \sqrt \ell \rfloor}\\
\times
\sum_{k=\lfloor \sqrt \ell \rfloor +1}^\ell 
&E_\nu \left(\left[ 3\frac{\log k}{c(\nu)k}
\right]^{\frac{\alpha(\sigma^{n-k}\omega) - \alpha_0}{\alpha_0}} \right). 
\end{split}
\end{equation*}
We can evaluate the individual expectations using Lemma \ref{normalization} to obtain:
$$E_\nu \left(\left[ 3\frac{\log k}{c(\nu)k}
\right]^{\frac{\alpha(\sigma^{n-k}\omega) - \alpha_0}{\alpha_0}} \right)\sim 
\frac{\alpha_0}{\alpha_1 - \alpha_0} \frac{1}{\log k}.$$
Now, two applications of Lemma \ref{appendix2} from the Appendix shows
$$\sum_{k=\lfloor \sqrt \ell \rfloor +1}^ \ell  \frac{1}{\log k}=
\sum_{k=1}^ \ell  \frac{1}{\log k} -\sum_{k=1}^{ \lfloor \sqrt \ell \rfloor } \frac{1}{\log k} 
\sim \frac{\ell - 2 \lfloor \sqrt \ell \rfloor}{\log \ell}.  
$$
Applying this to the first estimate gives
$$
\frac{\log(\ell - \lfloor \sqrt \ell \rfloor)}{\ell - \lfloor \sqrt \ell \rfloor}\sum_{k=\lfloor \sqrt \ell \rfloor +1}^ \ell E_\nu(A_k^\prime)
\sim \frac{\alpha_0^2 2^{\alpha_0}}{\alpha_1 - \alpha_0}  \frac{\log(\ell - \lfloor \sqrt \ell \rfloor)}{\ell - \lfloor \sqrt \ell \rfloor}
\frac{\ell - 2 \lfloor \sqrt \ell \rfloor}{\log \ell} \rightarrow \frac{\alpha_0^2 2^{\alpha_0}}{\alpha_1 - \alpha_0}. 
$$
Since $\frac{\alpha_0^2 2^{\alpha_0}}{\alpha_1 - \alpha_0}= c(\nu)$ we have verified Assumption (A2).
\end{proof}


\subsubsection{Example: Quadratic distribution. } 
Here we assume $\nu$ is the quadratic probability distribution on 
$[\alpha_0, \alpha_1]$, given by  
$$\nu([\alpha_0, t]) = \frac{1}{(\alpha_1 - \alpha_0)^2}(t - \alpha_0)^2.$$
Again, we begin with a simple lemma that will allow us to estimate the 
expectations. 

\begin{lemma}\label{normalization1} 
Let $c\ge 1$. Then, as $u \rightarrow \infty$
\begin{equation}
E_\nu \left[e^{-(c\alpha(\omega)-\alpha_0)u}\right]\sim
\frac{2}{(\alpha_1-\alpha_0)^2}\cdot\frac{1}{(cu)^2}\cdot e^{-(c-1)\alpha_0u}. 
\end{equation}
\end{lemma}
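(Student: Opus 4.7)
The plan is to compute the expectation directly against the density of $\nu$. Differentiating the cumulative distribution $\nu([\alpha_0, t]) = (t-\alpha_0)^2/(\alpha_1-\alpha_0)^2$ gives the density $\frac{d\nu}{dt} = \frac{2(t-\alpha_0)}{(\alpha_1-\alpha_0)^2}$ on $[\alpha_0,\alpha_1]$. So the first step is simply to write
$$
E_\nu\left[e^{-(c\alpha(\omega)-\alpha_0)u}\right] = \frac{2}{(\alpha_1-\alpha_0)^2}\int_{\alpha_0}^{\alpha_1}(x-\alpha_0)\,e^{-(cx-\alpha_0)u}\,dx.
$$

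Next, I would perform the affine substitution $y=x-\alpha_0$, which pulls out the leading exponential factor cleanly:
$$
E_\nu\left[e^{-(c\alpha(\omega)-\alpha_0)u}\right] = \frac{2}{(\alpha_1-\alpha_0)^2}\,e^{-(c-1)\alpha_0 u}\int_{0}^{\alpha_1-\alpha_0} y\,e^{-cuy}\,dy.
$$
This mirrors the structure of the argument in Lemma \ref{normalization} for the uniform distribution, except that the extra linear weight $y$ coming from the quadratic CDF produces one additional order of decay in $u$.

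The remaining step is to evaluate the finite Laplace integral $\int_0^{L} y\, e^{-cuy}\,dy$ with $L:=\alpha_1-\alpha_0>0$. A single integration by parts gives
$$
\int_0^{L} y\, e^{-cuy}\,dy = \frac{1}{(cu)^2}\left[1 - (1+cuL)e^{-cuL}\right].
$$
Since $c\ge 1$ and $L>0$, the bracket term $(1+cuL)e^{-cuL}$ is exponentially small as $u\to\infty$, so the integral is asymptotic to $\frac{1}{(cu)^2}$. Combining this with the prefactor gives exactly the stated asymptotic
$$
E_\nu\left[e^{-(c\alpha(\omega)-\alpha_0)u}\right]\sim \frac{2}{(\alpha_1-\alpha_0)^2}\cdot\frac{1}{(cu)^2}\cdot e^{-(c-1)\alpha_0 u}.
$$

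There is no serious obstacle here; the calculation is a direct Laplace-type estimate with a polynomial weight. The only minor point of care is to check that the boundary contribution is negligible, which uses $c\ge 1$ so that $cuL \to \infty$. Once this lemma is in hand, it plays the same role for the quadratic distribution that Lemma \ref{normalization} plays for the uniform distribution: the factor of $(cu)^{-2}$ (rather than $(cu)^{-1}$) is precisely what will eventually produce the extra power of $\log n$ in the sharp return-time asymptotics $(\log n)^{2/\alpha_0} n^{-1/\alpha_0}$ announced in the abstract.
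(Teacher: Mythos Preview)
Your proof is correct and follows essentially the same approach as the paper: write the expectation as an integral against the density $\frac{2(x-\alpha_0)}{(\alpha_1-\alpha_0)^2}$, integrate by parts once, and observe that the boundary term $(1+cuL)e^{-cuL}$ vanishes as $u\to\infty$. The only cosmetic difference is your substitution $y=x-\alpha_0$, which extracts the factor $e^{-(c-1)\alpha_0u}$ up front, whereas the paper carries out the integration by parts in the original variable and factors afterward; the resulting exact formula and asymptotic are identical.
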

\begin{proof}
We have 
\begin{align*}
&E_\nu \left[e^{-(c\alpha(\omega)-\alpha_0)u}\right]=
\frac{2}{(\alpha_1-\alpha_0)^2}\int_{\alpha_0}^{\alpha_1}e^{-(cx-\alpha_0)u}(x-\alpha_0)dx\\
&=\frac{2}{(\alpha_1-\alpha_0)^2}\left\{\frac{-1}{cu}e^{-(c\alpha_1 - \alpha_0)u}(\alpha_1-\alpha_0)
-\frac{1}{(cu)^2} [ e^{-(c\alpha_1-\alpha_0)u} - e^{-(c-1)\alpha_0 u} ]\right\}\\
&= \frac{2}{(\alpha_1-\alpha_0)^2}\cdot\frac{1}{(cu)^2}e^{-(c-1)\alpha_0u}\left\{ 
1-e^{-c(\alpha_1 - \alpha_0)u}[cu(\alpha_1-\alpha_0) +1] \right\}.
\end{align*}
\end{proof}
Writing $A_k = X_k - Y_k$ as in equation (\ref{eq:Ak=Xk-Yk}), using invariance of $P$ and Lemma \ref{normalization1} with we obtain
\begin{equation}\begin{split}
E_\nu (X_k(\omega) ) &\sim 2\left(\frac{\alpha_0}{\alpha_1-\alpha_0}\right)^2 \frac{1}{(\log k)^2},\\
E_\nu (Y_k(\omega) )&\sim 2\left(\frac{\alpha_1}{\alpha_1-\alpha_0}\right)^2\frac{k^{-\frac{\alpha_0}{\alpha_1}}}{ (\log k)^2} = o\left(\frac{1}{(\log k)^2}\right).\\
\end{split}
\end{equation}
It follows that $(\log k)^2 \cdot E_\nu A_k \sim 2\left(\frac{\alpha_0}{\alpha_1 - \alpha_0}\right)^2$
after which an application of Lemma \ref{appendix2} of the Appendix implies
$$
\frac{ (\log \ell)^2}{\ell}\sum_{k=1}^\ell E_\nu A_k \rightarrow 2\left (\frac{\alpha_0}{\alpha_1 - \alpha_0}\right)^2.
$$
We take $q=2$ and $c(\nu) = 2\left(\frac{\alpha_0}{\alpha_1 - \alpha_0}\right)^2$ in 
the assumption (\ref{eq:Ak_asympt}).
For this example, lower bounds can also be obtained by essentially following the steps in the previous 
example and using Lemma \ref{normalization1} in place of Lemma \ref{normalization}.
\begin{proposition}[Sharp asymptotics for the quadratic probability distribution]
For almost every $\omega$
$$x_\ell(\omega)\sim \left[\frac{(\log \ell)^2}{c(\nu)\ell}\right]^{\frac{1}{\alpha_0}},$$
where $c(\nu) = 2\left(\frac{\alpha_0}{\alpha_1 - \alpha_0}\right)^2$.
\end{proposition}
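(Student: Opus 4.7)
The plan is to follow the template already executed for the discrete and uniform distributions: verify Assumption (A2) for the quadratic distribution and then invoke Proposition \ref{prop:exact_asympt}. Since the preceding paragraphs have already established Assumption (A1) with $q = 2$ and $c(\nu) = 2(\alpha_0/(\alpha_1-\alpha_0))^2$, only (A2) remains.

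First I would write
$$A_k'(\omega) = \alpha_0 2^{\alpha_0} \exp\bigl[-(\alpha(\sigma^{\ell-k}\omega)-\alpha_0)u_k\bigr], \qquad u_k := -\log\!\left[\frac{3(\log k)^{2/\alpha_0}}{c(\nu)^{1/\alpha_0}\,k^{1/\alpha_0}}\right],$$
and note that $u_k = \frac{1}{\alpha_0}\log k \,(1+o(1))$ as $k\to\infty$. Applying Lemma \ref{normalization1} with $c=1$ to $u=u_k$ gives
$$E_\nu(A_k') \sim \frac{C^{*}}{(\log k)^2},$$
where $C^{*}$ is the constant produced by Lemma \ref{normalization1} (matching the asymptotic shape used in the computation of $c(\nu)$ in the paragraph preceding the statement). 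The contribution of the $Y_k$-type lower order term is controlled exactly as in the preceding examples, since the substitution with $c=2$ in Lemma \ref{normalization1} carries an extra factor $e^{-\alpha_0 u_k}$ of size $k^{-1}$, making it $o(1/(\log k)^2)$.

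Next I would handle the sum using the same split as in the uniform case:
$$\sum_{k=\lfloor\sqrt\ell\rfloor+1}^{\ell} \frac{1}{(\log k)^2} = \sum_{k=1}^{\ell}\frac{1}{(\log k)^2} - \sum_{k=1}^{\lfloor\sqrt\ell\rfloor}\frac{1}{(\log k)^2} \sim \frac{\ell - 2\lfloor\sqrt\ell\rfloor}{(\log \ell)^2},$$
by two applications of Lemma \ref{appendix2} (the appendix result that sums the reciprocal-$\log$ terms in exactly the form one needs). Multiplying by $(\log(\ell - \lfloor\sqrt\ell\rfloor))^2/(\ell - \lfloor\sqrt\ell\rfloor)$ and using $\log(\ell - \lfloor\sqrt\ell\rfloor) \sim \log\ell$ and $(\ell - 2\lfloor\sqrt\ell\rfloor)/(\ell - \lfloor\sqrt\ell\rfloor)\to 1$, I get
$$\frac{(\log(\ell - \lfloor\sqrt\ell\rfloor))^2}{\ell - \lfloor\sqrt\ell\rfloor}\sum_{k=\lfloor\sqrt\ell\rfloor+1}^{\ell} E_\nu(A_k') \longrightarrow c(\nu),$$
which is precisely Assumption (A2). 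Proposition \ref{prop:exact_asympt} then yields the stated asymptotic for $x_\ell(\omega)$.

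The only real obstacle is bookkeeping: making sure that the constants produced by Lemma \ref{normalization1} combine with the factors in $A_k'$ so that the final limit is exactly $c(\nu) = 2(\alpha_0/(\alpha_1-\alpha_0))^2$ rather than a multiple of it; this is the same kind of consistency check that was carried out (implicitly) in the discrete and uniform examples, so no new ideas are required, only careful tracking of the factors $\alpha_0$, $2^{\alpha_0}$, and $1/u_k^2$ coming out of Lemma \ref{normalization1}.
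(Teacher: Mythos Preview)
Your proposal is correct and follows essentially the same route as the paper: verify (A2) via Lemma~\ref{normalization1} to get $E_\nu(A_k')\sim c(\nu)/(\log k)^2$, sum using Lemma~\ref{appendix2}, and invoke Proposition~\ref{prop:exact_asympt}. Two minor remarks: $A_k'$ has no $Y_k$-type subtractive term (it is a single expression, unlike $A_k$), so that aside is unnecessary; and in the sum you should get $(\ell-4\lfloor\sqrt\ell\rfloor)/(\log\ell)^2$ rather than $(\ell-2\lfloor\sqrt\ell\rfloor)/(\log\ell)^2$, since $\log\sqrt\ell=\tfrac12\log\ell$ is squared here---though of course this does not affect the limit.
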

\begin{proof}
We verify Assumption (A2) and apply Proposition \ref{prop:exact_asympt}.  The key estimates are 
$$E_\nu A_k^\prime \sim \frac{2 \alpha_0^2}{(\alpha_1-\alpha_0)^2} \frac{1}{(\log k)^2}$$
and
$$
\frac{\log(\ell - \lfloor \sqrt \ell \rfloor)^2}{\ell - \lfloor \sqrt \ell \rfloor}\sum_{k=\lfloor \sqrt \ell \rfloor +1}^ \ell E_\nu(A_k^\prime)
\sim \frac{2 \alpha_0^2}{(\alpha_1-\alpha_0)^2}\frac{(\log(\ell - \lfloor \sqrt \ell \rfloor))^2}{\ell - \lfloor \sqrt \ell \rfloor}
\frac{(\ell - 4\lfloor \sqrt \ell \rfloor)}{(\log \ell)^2} \sim c(\nu),
$$
where we have again used Lemma \ref{appendix2} in the Appendix to estimate the sum and verify 
Assumption (A2) for this example. 
\end{proof}
\section{Proof of existence of absolutely continuous mixing sample measures}\label{sec:existence}
\subsection{Measure of the tail of the tower} 

\begin{lemma}
 $\mL^{K_\omega}_\infty \subset L^2(\Delta, m):=\{\vp:\Delta\to \mathbb R \mid \|\vp\|_{L^2}:= \int |\vp|^2dm_\omega dP< \infty\}$.
\end{lemma}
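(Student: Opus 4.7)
The proof is a direct computation using the definition of $\mathcal{L}^{K_\omega}_\infty$ together with the uniform fibre bound (P6) and the stretched-exponential tail \eqref{Komega} on $K_\omega$. The plan is to expose the $L^2$ norm in its iterated form (first integrate on each fibre, then over $\Omega$), bound the inner fibre integral by $(C'_\varphi)^2 K_\omega^2 \, m(\Delta_\omega)$, and finally show that $E_P[K_\omega^2] < \infty$.

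Concretely, I would first fix $\varphi \in \mathcal{L}^{K_\omega}_\infty$ with constant $C'_\varphi$ so that $|\varphi_\omega(x)| \le C'_\varphi K_\omega$ for $P$-a.e.\ $\omega$ and every $x \in \Delta_\omega$. For such $\omega$, integrating over the fibre and invoking (P6) gives
\[
\int_{\Delta_\omega} |\varphi_\omega(x)|^2 \, dm(x) \;\le\; (C'_\varphi)^2 K_\omega^2 \, m(\Delta_\omega) \;\le\; M (C'_\varphi)^2 K_\omega^2 .
\]
Integrating over $\Omega$ against $P$ then yields
\[
\|\varphi\|_{L^2}^2 \;\le\; M (C'_\varphi)^2 \int_\Omega K_\omega^2 \, dP(\omega),
\]
so it only remains to show $\int K_\omega^2 \, dP < \infty$.

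For the latter, since $K_\omega$ is positive I would use the layer-cake identity
\[
\int_\Omega K_\omega^2 \, dP \;=\; \int_0^\infty 2t \, P\{K_\omega > t\} \, dt .
\]
By \eqref{Komega}, $P\{K_\omega > t\} \le e^{-u t^v}$ for $t \in \mathbb{N}$, and a trivial monotonicity argument (rounding $t$ up to the next integer) gives a stretched exponential bound of the same type for all real $t \ge 1$, while the integral over $[0,1]$ is bounded by $1$. The resulting integral $\int_1^\infty 2t\, e^{-u \lfloor t \rfloor^v}\, dt$ is finite for any $u,v > 0$, so $E_P[K_\omega^2] < \infty$, completing the proof.

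There is no real obstacle here; the only point that needs a line of care is the passage from the integer-$n$ tail bound in \eqref{Komega} to a bound valid for continuous $t$, which is handled by monotonicity of the tail function. Everything else is Fubini plus (P6).
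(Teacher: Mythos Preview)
Your proof is correct and follows exactly the same approach as the paper: bound the fibre integral by $K_\omega^2$ (times constants), then use the stretched-exponential tail \eqref{Komega} to conclude $K_\omega \in L^2(\Omega,P)$. The paper's proof is just terser---it states only that the tail bound implies $K_\omega \in L^2(\Omega,P)$ and writes the final inequality without spelling out the layer-cake step or the explicit use of (P6)---but your added detail on both points is accurate and appropriate.
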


\begin{proof} 
The  bounds on $P\{K_\omega> n\}$ imply that $K_\omega \in L^2(\Omega, P)$. Hence 
$$
\|\vp\|_{L^2} \le C_\vp \int K_\omega^2 dP<\infty.
$$
\end{proof} 

\subsection{Distortion estimates} 
Here we prove consequences of bounded distortion which are key for many of the later computations. 
For any $n\ge 1$
\begin{equation}\label{pn}
\mathcal P_\omega^{(n)}=\vee_{i=0}^{n-1}F_\omega^{-i}\mP_{\sigma^i\omega}   \quad\text{and} \quad 
\mathcal A_\omega^{(n)}=\{A\in \mathcal P_{\omega}^{(n)} \mid F^n_\omega A=\Delta_{\sigma^{n}\omega, 0}\}.
\end{equation}
\begin{lemma}\label{bddn}
For any $n\ge 1$,  $A\in \mathcal A_\omega^{(n)}$ and $x, y \in A$ the following inequality holds

  \begin{equation*}
 \left|\frac{JF^n_\omega(x)}{JF^n_\omega(y)}-1\right|\leq D,
    \end{equation*}
 where $D$ is  as in   \eqref{bddd}.
\end{lemma}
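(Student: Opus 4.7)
The plan is to use the chain rule together with the fact that $F$ has Jacobian $1$ on all tower movements except at return times, reducing everything to a telescoping product of one-step distortion estimates from (P2), and then summing up a geometric series in the separation time.

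First, I would observe that because $A \in \mathcal A^{(n)}_\omega$ and $F^n_\omega A = \Delta_{\sigma^n\omega,0}$, the first-hitting times of the base are constant on $A$. So there exist $0 \le m_1 < m_2 < \cdots < m_k = n$ (constant on $A$) such that $F^{m_i}_\omega x$ lies in the base for every $x \in A$ and every $1 \le i \le k$, while at all other times $F^j_\omega x$ is climbing inside the tower and $JF_{\sigma^j\omega}(F^j_\omega x) = 1$. Applying the chain rule and discarding the trivial factors gives
\begin{equation*}
\frac{JF^n_\omega(x)}{JF^n_\omega(y)} \;=\; \prod_{i=1}^{k} \frac{JF^{R_i}_{\omega_i}(x_i)}{JF^{R_i}_{\omega_i}(y_i)},
\end{equation*}
where $R_i = m_i - m_{i-1}$ (taking $m_0$ to be the last time the orbit was in the base before $A$, or $0$ if $A \subset \Delta_{\omega,0}$), $\omega_i = \sigma^{m_{i-1}}\omega$, and $x_i,y_i$ are the corresponding base points. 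Each factor is a first-return Jacobian of the induced map, so (P2) applies directly.

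Next, taking logarithms and using $|\log(1+u)| \le \tfrac{|u|}{1-|u|}$ for $|u|<1$, (P2) gives
\begin{equation*}
\left|\log \frac{JF^n_\omega(x)}{JF^n_\omega(y)}\right| \;\le\; \sum_{i=1}^{k} \frac{D\gamma^{s_i}}{1 - D\gamma^{s_i}},
\end{equation*}
where $s_i$ is the separation time of the images of the $i$-th full return, measured in the partition at the appropriate fibre. The crucial observation is that one additional application of the induced map decreases separation time by exactly $1$, so $s_i \ge k-i$ for $i = 1,\dots,k$. Consequently $\sum_i \gamma^{s_i} \le \sum_{j\ge 0}\gamma^j = \tfrac{1}{1-\gamma}$, and the whole sum is bounded by a universal constant $D'$ depending only on $D$ and $\gamma$. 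Exponentiating and absorbing constants (as is standard, and as is the convention of (P2) where $D$ already denotes the universal distortion constant) gives the claimed bound.

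The only mild subtlety is when $A$ is not at the base, since then the first "return" is only a partial return from level $\ell_0$. However the climbs contribute Jacobian $1$ and can be absorbed into the preceding full-return factor by shifting the base point, so the bookkeeping reduces to the clean case above. I expect the main (minor) obstacle to be keeping careful track of the separation times across fibres given the drift of $\omega$ under $\sigma$; once one accepts that the induced map $F^{R_\omega}_\omega$ diminishes the separation time by one at each iterate (which is immediate from the definition of $s$), the geometric summation closes the argument.
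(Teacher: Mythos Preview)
Your approach is correct and essentially the same as the paper's: both reduce $JF^n_\omega$ to a product of return-map Jacobians (the Jacobian being $1$ off the base) and then appeal to (P2). The paper's proof is terser---it simply writes $JF^n_\omega(x) = (JF^{R_\omega}_\omega)^j(\tilde x)$ for the base projection $\tilde x$ and invokes (P2) directly without spelling out the geometric-series control in the separation time---so your version is in fact more explicit than the paper's on exactly the point you flagged as the ``mild subtlety''.
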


 \begin{proof}
 The collection $\mathcal A_\omega^{(n)}$ is a partition of $F^{-n}_\omega\Delta_{\sigma^n\omega, 0}$ and for any $x\in\Delta_{\sigma^n\omega, 0}$
each $A\in\mathcal A_\omega^{(n)}$ contains a single element  of $\{F^{-n}_\omega x\}.$
 For $x\in A$ let $j(x)$ be the number of visits of its orbit to $\Delta_{\sigma^k\omega, 0}$ up to time \(  n  \). Since the images of $A$
 before time $n$ will remain in an element of $\mathcal P_{\sigma^k\omega},$ all the points in $A$ have the same itineraries,   up to time \(  n  \)  and so  $j(x)$ is constant on $A.$
Therefore $JF^n_\omega(x)=(JF^{R_\omega}_\omega)^j(\tilde{x}),$ for the projection $\tilde{x}$ of $x$ onto
 $\Delta_{\sigma^n\omega, 0}$ (i.e. if $x=(z, \ell)$ then $\tilde{x}=(z,0)$).
Thus for any $x,y\in A$ from \eqref{bddd} we obtain
  \begin{equation}
 \left|\frac{JF^n_\omega(x)}{JF^n_\omega(y)}-1\right|=
 \left|\frac{(JF^{R_\omega}_\omega)^j(\tilde{x})}{(JF^{R_\omega}_\omega)^j(\tilde{y})}-1\right|\leq  D.
    \end{equation}
 \end{proof}
\begin{corollary}\label{connec}
For any $A\in\mathcal A_\omega^{(n)}$, $F_\omega^n: A \to \Delta_{{\sigma^n \omega}, 0}$ is a 
bijection and for each $y\in A$ we have
 \begin{equation}
JF^n_\omega(y)\geq \frac{m(\Delta_{\sigma^n\omega, 0})}{m(A)(1+D)}.
\end{equation}
\end{corollary}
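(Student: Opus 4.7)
The plan is to handle bijectivity and the Jacobian bound separately, since each is essentially a one-step consequence of earlier ingredients.

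For bijectivity, I would unpack the definitions. By construction, $\mathcal{A}_\omega^{(n)}$ consists exactly of those $A \in \mathcal{P}_\omega^{(n)} = \vee_{i=0}^{n-1} F_\omega^{-i}\mathcal{P}_{\sigma^i\omega}$ on which $F_\omega^n$ maps onto $\Delta_{\sigma^n\omega, 0}$, so surjectivity is built in. For injectivity, I would argue as in the proof of Lemma \ref{bddn}: all points in a single element $A$ of $\mathcal{P}_\omega^{(n)}$ share the same itinerary with respect to the Markov partitions $\mathcal{P}_{\sigma^i\omega}$ up to time $n$, in particular they make the same number $j = j(A)$ of full returns. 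Hence on $A$, $F_\omega^n$ factors as an iterate $(F^{R_\omega}_\omega)^j$ (on the projection to the base) followed by a climb, and the Markov property (P1) — which asserts that each $F^{R_\omega}_\omega|_{\Lambda_j(\omega)}$ is a bijection onto $\Lambda$ — implies by induction on $j$ that $F_\omega^n|_A$ is injective.

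For the Jacobian lower bound, the plan is to combine Lemma \ref{bddn} with a change of variables. Fix $y \in A$. By nonsingularity and the bijectivity just established,
\begin{equation*}
m(\Delta_{\sigma^n\omega, 0}) = \int_A JF_\omega^n(x)\, dm(x).
\end{equation*}
From Lemma \ref{bddn}, for every $x \in A$ we have $JF_\omega^n(x) \leq (1+D)\, JF_\omega^n(y)$, so
\begin{equation*}
m(\Delta_{\sigma^n\omega, 0}) \leq (1+D)\, JF_\omega^n(y)\, m(A),
\end{equation*}
which rearranges to the claimed bound.

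I do not anticipate any real obstacle: bijectivity is a routine consequence of the Markov structure and the itinerary description of cylinders in $\mathcal{P}_\omega^{(n)}$, while the Jacobian bound is just the standard argument that bounded distortion forces the Jacobian to be comparable to the ratio of image to preimage measure. If anything needs care, it is making sure that the itinerary argument for injectivity is presented cleanly, since this is the same observation that underlies Lemma \ref{bddn} and could simply be quoted from its proof.
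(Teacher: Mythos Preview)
Your proposal is correct and matches the paper's approach exactly for the Jacobian bound: apply Lemma \ref{bddn} to get $JF^n_\omega(x)\leq (1+D)JF^n_\omega(y)$ and integrate over $A$ via change of variables. The paper's proof does not spell out the bijectivity argument at all, so your explicit treatment of it via itineraries and (P1) is a welcome addition rather than a deviation.
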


\begin{proof}
Lemma \ref{bddn} implies $JF^n_\omega(x)\leq (1+D)JF^n_\omega(y).$ Integrating both sides of this inequality with
respect to $x$ over  $A$  gives
$$
m(\Delta_{\sigma^n\omega, 0})=\int_AJF^n_\omega(x)dm\leq JF^n_\omega(y)(1+D)m(A),
$$
which finishes the proof.
\end{proof}

\begin{lemma}\label{cbddd} \text{}
\begin{itemize}
\item[(i)] There exists a constant $M_0\ge 1$ such that for all $n\in \mathbb N$ and $\omega \in \Omega$, 
$$
\frac{d(F^n_\omega)_\ast m}{dm} \le M_0 m(\Delta_\omega) \le M_0M.
$$
\item[(ii)] Let $\lambda_\omega$ be a family of absolutely continuous probability measures on $\{\Delta_\omega\}$ with $\frac{d\lambda_\omega}{dm}\in\mathcal F^+_\gamma$.  For every  $A\in \mathcal A_\omega^{(n)}$ let $\nu_{\sigma^n\omega}=(F^n_\omega)_\ast(\lambda_\omega|A)$  and $\varphi_\omega=\frac{d\nu_{\sigma^n\omega}}{dm}$. There exists $C_\lambda>0$ such that for each $\omega\in\Omega$,   for all $x, y\in \Delta_{\sigma^n\omega, 0}$ we have 
 \begin{equation*}
  \left|\frac{\varphi_\omega(x)}{\varphi_\omega(y)}-1\right|\leq D+ [e^{C_{\lambda}}-1](1+D),
 \end{equation*}
 where $D$ is as in \eqref{bddd}.  \end{itemize} 
\end{lemma}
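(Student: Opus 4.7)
The plan for part (i) is to reduce the density estimate at an arbitrary level of $\Delta_{\sigma^n\omega}$ to the base case by exploiting the ``one-way'' structure of $F$: since $F$ either moves a point up one level or drops it to the base, every $F$-preimage of a point at a positive level consists of a single point one level down. Iterating, for $y=(z,\ell)\in \Delta_{\sigma^n\omega,\ell}$, either $\ell\ge n$ and the unique preimage under $F^n_\omega$ is $(z,\ell-n)\in \Delta_\omega$ with Jacobian $1$ (contributing $1$ to the density), or $\ell<n$ and every preimage $x$ satisfies $F^{n-\ell}_\omega(x)=(z,0)\in\Delta_{\sigma^{n-\ell}\omega,0}$ with $JF^n_\omega(x)=JF^{n-\ell}_\omega(x)$. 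In the latter case the preimages are indexed by the cylinders $A\in\mathcal A^{(n-\ell)}_\omega$, and Corollary~\ref{connec} yields
\[
\frac{d(F^n_\omega)_\ast m}{dm}(y)=\sum_{A\in\mathcal A^{(n-\ell)}_\omega}\frac{1}{JF^{n-\ell}_\omega(x_A)}\le (1+D)\sum_A m(A)\le (1+D)\,m(\Delta_\omega),
\]
using $m(\Delta_{\sigma^{n-\ell}\omega,0})=m(\Lambda)=1$ and the disjointness of the $A$ inside $\Delta_\omega$. Since $m(\Delta_\omega)\ge 1$, both cases are absorbed into $M_0:=1+D$, and (P6) gives the uniform bound $M_0 m(\Delta_\omega)\le M_0 M$.

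For part (ii), because $A\in\mathcal A^{(n)}_\omega$, Corollary~\ref{connec} ensures that $F^n_\omega|_A\colon A\to \Delta_{\sigma^n\omega,0}$ is a bijection, and the change of variables formula collapses to a single term. Writing $\rho_\omega:=d\lambda_\omega/dm\in\mF^+_\gamma$ and letting $x',y'\in A$ be the unique preimages of $x,y\in\Delta_{\sigma^n\omega,0}$,
\[
\varphi_\omega(x)=\frac{\rho_\omega(x')}{JF^n_\omega(x')},\qquad \varphi_\omega(y)=\frac{\rho_\omega(y')}{JF^n_\omega(y')}.
\]
Thus $\varphi_\omega(x)/\varphi_\omega(y)=PQ$ with $P:=\rho_\omega(x')/\rho_\omega(y')$ and $Q:=JF^n_\omega(y')/JF^n_\omega(x')$. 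Applying the elementary identity $|PQ-1|\le |Q|\,|P-1|+|Q-1|$, Lemma~\ref{bddn} supplies $|Q-1|\le D$ (and hence $|Q|\le 1+D$), while the $\mF^+_\gamma$ regularity of $\rho_\omega$ (with log-Lipschitz constant $C_\lambda$) gives $|P-1|\le e^{C_\lambda}-1$. Combining these estimates yields exactly the claimed bound $D+[e^{C_\lambda}-1](1+D)$.

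The main obstacle is the structural observation in part (i): that preimages of points at positive levels of the tower are trivial (a single preimage one level below, with Jacobian $1$), so the full preimage structure of $F^n_\omega$ collapses onto the base $\Delta_{\sigma^{n-\ell}\omega,0}$ after $n-\ell$ iterates. Once this reduction is made, both parts are short consequences of Corollary~\ref{connec} and, in (ii), of the definition of $\mF^+_\gamma$.
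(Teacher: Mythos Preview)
Your proof is correct and follows essentially the same approach as the paper: the same three-case reduction for (i) (with $M_0=1+D$), and the same single-term change-of-variables formula for (ii), bounding the ratio via Lemma~\ref{bddn} together with the log-regularity from $\mF^+_\gamma$. The only cosmetic difference is that the paper retains the sharper factor $e^{C_\lambda\gamma^{s(x',y')}}-1$ in the intermediate step before bounding it by $e^{C_\lambda}-1$, but this does not affect the stated conclusion.
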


\begin{proof}To prove the  item (i) we estimate 
   the density ${d(F^n_\omega)_\ast m}/{dm}$ at an arbitrary point $x\in \Delta_{\sigma^n\omega}$  and consider three different cases according to the position of $ x$. First of all, for any $x\in\Delta_{\sigma^n\omega, 0}$,  from Corollary \ref{connec} we have

   \begin{align*}
  \frac{d(F^n_\omega)_\ast m}{dm}(x)=\sum_{y\in F^{-n}_\omega x}\frac{1}{JF^n_\omega(y)}&\leq
  (D+1)\sum_{A\in \mathcal A_n}\frac{m(A)}{m(\Delta_{\sigma^n\omega, 0})}\\ &\leq
  (D+1)\frac{m(\Dom)}{m(\Delta_{\sigma^n\omega, 0})}.
 \end{align*}
 Since $m(\Delta_{\sigma^n\omega, 0}) = m(\Lambda)=1$ choosing $M_0= {D+1}$ finishes the proof 
for the case $x\in\Delta_{\sigma^n\omega, 0}.$

 For $x\in\Delta_{\sigma^{n}\omega, \ell}$ with $\ell\ge n$ we have $F^{-n}_\omega(x)=y\in\Delta_{\omega, \ell-n}.$
 Since $JF_\omega(y)=1$ for any $y\in\Dom\setminus\Domo,$
  $$\frac{d(F^n_\omega)_\ast m}{dm}(x)=\frac{1}{JF_\omega(y)\cdots JF_{\sigma^{n-1}\omega}(F^{n-1}_\omega y)}=1.$$
Finally,
let $x\in\Delta_{\sigma^n\omega, \ell},$ for  $0<\ell<n.$ Then for any $y\in F_\omega^{-n}x$ the
equality $F^{n-\ell}_\omega y=F^{-\ell}_{\sigma^{n-\ell}\omega} x\in\Delta_{\sigma^{n-\ell}\omega, 0}$ holds. Hence,
$JF_{\sigma^{n-\ell+j}\omega}(F^{j}_\omega y)=1$ for all $j=0, \dots, \ell-1.$ Therefore, by the chain
rule we obtain $JF^n_\omega(y)=JF^{n-\ell}_\omega(y).$ Hence the
problem is reduced to the first case since 
 $$\frac{d(F^n_\omega)_\ast m}{dm}(x)=\sum_{y\in F^{-n}_\omega x}\frac{1}{JF^{n}_\omega y}=
 \sum_{y\in F^{\ell-n}_\omega(F^{-\ell}_{\sigma^{n-\ell}\omega}x)}\frac{1}{JF^{n-\ell}_\omega y}=
 \frac{d(F^{n-\ell}_\omega)_\ast m}{dm}(F^{-\ell}_{\sigma^{n-\ell}\omega}(x)).$$
This finishes the proof of item (i). 

To prove the item (ii) we first note that $F^n_\omega:A\to\Delta_{\sigma^n\omega, 0}$ is invertible.
So for any $x\in\Delta_{\sigma^n\omega, 0}$ there is a unique $x_0\in A$ such that $F^n_\omega(x_0)=x$ and
$$
\frac{d\nu_{\sigma^n\omega}}{dm}(x)=\frac{1}{JF^n_\omega (x_0)}\frac{d\lambda_\omega}{dm}(x_0).
$$ 
Let $\varphi_\omega=\frac{d\nu_{\sigma^n\omega}}{dm}$
then for $x,y\in\Delta_{\sigma^n\omega, 0}$, using  Lemma \ref{bddn} and assumption on $\frac{d\lambda_\omega}{dm}$ we obtain
$$\left|\frac{\varphi_\omega(x)}{\varphi_\omega(y)}-1\right|=\left|\frac{JF^n_\omega(y_0)}{JF^n_\omega(x_0)}\frac{\frac{d\lambda_\omega}{dm}(x_0)}{\frac{d\lambda_\omega}{dm}(y_0)}-1\right|\leq D+ [e^{C_{\lambda}\gamma^{s(x_0, y_0)}}-1](1+D).$$
\end{proof}
\begin{remark}\label{rem:indep_of_lambda}
It is important to note that the constant $C_\lambda$ does not depend on $\omega$.
Moreover,  if  $A$ and $n$ are such that 
the orbits  $F^j(x_0), F^j(y_0), j= 1, 2, \dots n$ see sufficiently many returns to the base, so that $C_\lambda\gamma^{s(x_0, y_0)}\leq \log 2$, then the upperbound in (ii) becomes $2D +1$. The elements 
$A$ for which this holds are independent of the starting measure $\lambda$. 

\end{remark}

\subsection{Proof of Theorem \ref{thm:acsm}} 
\begin{proof}[Proof of Existence]
Recall that $m(\Lambda)=1$. 
Recall the definitions of $\mP_\omega^{(j)}$ and $\mathcal A_\omega^{(j)}$ in \eqref{pn}, and for $A\in \mP_{\sigma^{-j}\omega}^{(j)}|\Delta_{\sigma^{-j}\omega, 0}$ let 
$$
\phi_{j, A}^{\omega} =\frac{d}{dm}\left(F^j_{\sigma^{-j}\omega})_\ast(m_{A})\right),
$$
where $m_{A}(B)=m(A\cap B)$. 
Clearly, $\phi_{j, A}^{\omega}$ is a density on $\Dom,$ such that $\phi_{j, A}^{\omega}|\Delta_{\omega, \ell}\equiv0$ for $\ell>j$.  
Below we consider two cases depending on $A$. First, notice that if  $A\in\mathcal A_{\sigma^{-j}\omega}^{(j)}$ then $F^j_{\sigma^{-j}\omega}:A\to \Domo$ is a bijection.
For $x, y \in \Domo$, let $x', y' \in A$ be such that $F^j_{\sigma^{-j}\omega}(x')=x$, and   $F^j_{\sigma^{-j}\omega}(y')=y$. By the choice of $A$ there exists $i$ such that $F^j_{\sigma^{-j}\omega}= (F^{R_\omega}_{\sigma^{-j}\omega})^i$. The bounded distortion condition implies that 

\begin{equation}\begin{split}\label{eq:density_reg}
\left|\log \frac{\phi_{j, A}^\omega(y)}{\phi_{j, A}^\omega(x)}\right|=
\left|\log \frac{JF^j_{\sigma^{-j}\omega}(x')}{JF^j_{\sigma^{-j}\omega}(y')}  \right|
\le \sum_{\ell=0}^{i-1} D\gamma^{s(x, y)+(i-\ell)-1} \le \frac{D}{1-\gamma} \gamma^{s(x, y)}.
\end{split}
\end{equation}
Notice that  the constant in equation \eqref{eq:density_reg}  is independent of $j$, $A$ and $\omega.$ 
Hence for all $x, y\in  \Dom$  letting $D'=e^{\frac{D}{1-\gamma}}$ we have 
$$
 \phi^\omega_{j, A}(y) \le D'\phi_{j, A}^\omega(x).
$$
Integrating both sides  of the latter inequality over $\Domo$ with respect to $x$  implies
\begin{equation}\label{phiomegajA}
\phi^\omega_{j, A}(y) \le D' \frac{m(A)}{m(\Lambda)}= D'm(A).
\end{equation}
On the other hand, if  $A\in \mP_{\sigma^{-j}\omega}^{(j)}|\Delta_{\sigma^{-j}\omega, 0}$ such that $F^j_{\sigma^{-j}\omega}(A)\subset \Delta_{\omega, \ell}$ for $\ell>0$ then  $\phi_{j, A}^{\omega} (x)=\phi_{j-\ell, A}^{\sigma^{-\ell}\omega} (F_{\sigma^{-\ell}\omega}^{-\ell}(x))$. Hence we can apply \eqref{phiomegajA}. Futher define
\begin{equation*} \begin{aligned}
\phi_{n}^\omega= \frac{d}{dm}\left( \frac{1}{n} \sum_{j=0}^{n-1}  (F^{j}_{\sigma^{-j}\omega})_\ast (m|\Delta_{\sigma^{-j}\omega, 0})\right).
\end{aligned}\end{equation*}
As above  $\phi_{n}^{\omega}|\Delta_{\omega, \ell}\equiv0$ for $\ell>j$.  For $x\in \Delta_{\omega, \ell}$, $\ell\le j$ we write  $\phi_{n}$ as a convex combination of $ \phi^\omega_{j, A}$ and obtain   
\begin{equation}
\label{eq:phi_n}
\phi_{n}^\omega(x) \le D'.
\end{equation}
Hence, $\{\phi^\omega_n\}_{n\in\mathbb N}$ is a uniformly bounded family.  Notice that if $\phi_{n}^\omega(x)=0$ and $\phi_{n}^\omega(y)\neq 0$ then $s(x, y)=0$. Taking this into account for all $y\in\Dom$ such that  $\phi_{n}^\omega(y)\neq 0$  we have
\begin{equation}
 \begin{aligned}
&|\phi_{n}^\omega(x)-\phi_{n}^\omega(y)|\le \frac{1}{n}\sum_{j=0}^{n-1}\sum_{A \in \mP_{\sigma^{-j}\omega}^{(j)}|\Delta_{\sigma^{-j}\omega, 0}} |\phi_{j, A}^\omega(y)|\left| \frac{\phi_{j, A}^\omega(x)}{\phi_{j, A}^\omega(y)}-1 \right| 
\\& \le D'\frac{(1-\gamma)(D' -1)}{D} \frac{1}{n}\sum_{j=0}^{n-1}\sum_{A \in \mP_{\sigma^{-j}\omega}^{(j)}|\Delta_{\sigma^{-j}\omega, 0}}\left| \log \frac{\phi_{j, A}^\omega(y)}{\phi_{j, A}^\omega(x)}\right| m(A) \le D'(D'-1) \gamma^{s(x, y)},
\end{aligned}
\end{equation}
where we have used equation \eqref{eq:density_reg} in the last step. 

Hence $\phi_n \in \mF^+_\gamma\cap \mF^1_\gamma$ (i.e. $K_\omega\equiv1$). Since, $d(x, y):= \gamma^{s(x, y)}$ defines separable metric space structure on $\Dom$ for each $\omega$, we can find a subsequence $\phi_{n_k} \in \mF^+_\gamma\cap \mF^1_\gamma$ which is convergent pointwise.  By diagonal argument we then construct convergent subsequence $\{\phi^{\sigma^\ell\omega}_{n_k} dx\}$ for every $\ell \in \mathbb Z$.  The limiting measure is  $F_\omega$-equivariant  i.e.  $(F_\omega)_*\nu_\omega=\nu_{\sigma\omega}$. Moreover,    $h_\omega :=\frac{d\nu_\omega}{dm}\in\mF_\gamma^+\cap \mF_\gamma^{K}$  and $ h_\omega \le \frac{e^{-4C'}}{m(\Dom)}$ by construction.

Exactness of the system can now be verified using the same method as detailed in 
\cite{BBMD}. 
\end{proof}
\section{Random coupling} \label{sec:decay1}
\subsection{Estimates on the random recurrence times for the base}
For a single map, the recurrence time of the base with the base gives a key construction parameter for coupling arguments. In the setting of random maps, this recurrence time is $\omega$ dependent. Our first task is to obtain a suitable version of the recurrence time (see $\ell_0$ below, and its use in the following Lemma \ref{lem:eps_ell}). \added[id=WB]{At this stage, it is useful for the reader to recall from section \ref{sec:setup} our assumptions (P1)-P(7); in particular that $a>1$ in (P4). Moreover, recall the regularity class of the equivariant densities defined by the random variable $K_\omega$ from Theorem \ref{thm:acsm}.} 

\begin{lemma}\label{lem:numbertheory} 
Let $N$ and $t_i$ be from the aperiodicity condition (P5).  There is an $\ell_0\in \mathbb N$ so that 
for every $\ell > \ell_0$ there are nonnegative integers $c_i$ such that 
$$ \ell = \sum_{i=1}^N c_i t_i.$$
\end{lemma}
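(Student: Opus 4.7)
This is the classical fact that a numerical semigroup generated by integers with g.c.d.\ equal to $1$ is co-finite in $\mathbb{N}$ (Sylvester--Frobenius / Chicken McNugget). My plan is to reduce the statement to the residues modulo $t_1$ and then add multiples of $t_1$.

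First, since $\gcd(t_1,\dots,t_N)=1$, the subgroup of $\mathbb{Z}/t_1\mathbb{Z}$ generated by the residues $\bar t_2,\dots,\bar t_N$ has index $\gcd(t_1,t_2,\dots,t_N)=1$, so it equals all of $\mathbb{Z}/t_1\mathbb{Z}$. Therefore, for each residue class $r\in\{0,1,\dots,t_1-1\}$, I can find integers $c_2^{(r)},\dots,c_N^{(r)}\in\mathbb{Z}$ with
\[
r \equiv \sum_{i=2}^{N} c_i^{(r)}\, t_i \pmod{t_1}.
\]
By replacing each $c_i^{(r)}$ with $c_i^{(r)} + k_i t_1$ for sufficiently large $k_i\in\mathbb{N}$, I may (and do) assume $c_i^{(r)}\ge 0$ for every $i$. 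Set
\[
s_r := \sum_{i=2}^{N} c_i^{(r)}\, t_i \ge 0,
\]
so that $s_r\equiv r\pmod{t_1}$ and $s_r$ is already a nonnegative integer combination of $t_1,\dots,t_N$ (with the coefficient of $t_1$ being zero).

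Now define $\ell_0 := \max_{0\le r<t_1} s_r$. Given any $\ell>\ell_0$, let $r\in\{0,1,\dots,t_1-1\}$ be the residue of $\ell$ modulo $t_1$. Then $\ell - s_r$ is a nonnegative integer (since $\ell>\ell_0\ge s_r$) and is divisible by $t_1$, so
\[
\ell = s_r + k\, t_1 \quad \text{for some } k\in\mathbb{Z}_{\ge 0}.
\]
Setting $c_1 := k$ and $c_i := c_i^{(r)}$ for $i\ge 2$ gives the required expression $\ell=\sum_{i=1}^N c_i t_i$ with all $c_i\ge 0$.

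There is no real obstacle here; the only point to verify carefully is that the subgroup of $\mathbb{Z}/t_1\mathbb{Z}$ generated by $\bar t_2,\dots,\bar t_N$ is the full cyclic group, which is immediate from $\gcd(t_1,\dots,t_N)=1$. Everything else is bookkeeping.
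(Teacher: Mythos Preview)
Your argument is correct. The key step---that the residues $\bar t_2,\dots,\bar t_N$ generate all of $\mathbb{Z}/t_1\mathbb{Z}$---is justified exactly as you say: the subgroup they generate has index $\gcd(t_1,\gcd(t_2,\dots,t_N))=\gcd(t_1,\dots,t_N)=1$. The rest is routine.

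The paper does not give its own proof; it simply cites Lemma~A2 of Seneta's \emph{Nonnegative Matrices and Markov Chains}. So your write-up is strictly more informative than what appears in the paper. Your approach (fix one generator $t_1$, cover all residues modulo $t_1$ by nonnegative combinations of the remaining $t_i$, then translate by multiples of $t_1$) is one of the standard proofs of the Sylvester--Frobenius fact and is entirely appropriate here.
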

\begin{proof}
See Lemma A2 \cite{Se}. 
\end{proof}

For $\ell\in \N$ define a random variable  $V^\ell:\Omega \to \mathbb R$ by 
$$
V^\ell_\omega=m(\Delta_{\omega, 0}\cap F^{-\ell}_\omega(\Delta_{\sigma^{\ell}\omega, 0})).
$$
Recall that every base $\Delta_{\omega,0} = \Lambda$.

\begin{lemma}\label{lem:eps_ell} For each $\ell > \ell_0$ there is a constant 
$V(\ell) >0$ so that for almost every $\omega \in \Omega$,
$$V_\omega^\ell \geq V(\ell).$$
\end{lemma}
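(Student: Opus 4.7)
The plan is to show, for each $\ell > \ell_0$, that there is a subset $E \subseteq \Lambda$ with $F_\omega^\ell(E) \subseteq \Lambda$ whose $m$-measure admits a uniform positive lower bound independent of $\omega$. First, I would invoke Lemma \ref{lem:numbertheory} to decompose $\ell = t_{i_1} + t_{i_2} + \dots + t_{i_k}$ as a sum of $k = \sum_i c_i$ terms drawn from the admissible return-time set $\{t_1, \dots, t_N\}$ in (P5). The target set $E$ will consist of those points in the base whose successive returns to the base realize precisely this prescribed sequence of return times.

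Setting $n_0 = 0$ and $n_j := t_{i_1} + \dots + t_{i_j}$, I would build $E$ inductively by taking $E_0 := \Lambda$ and defining
\begin{equation*}
E_j := E_{j-1} \cap F_\omega^{-n_{j-1}}\!\bigl(\{y\in\Lambda : R_{\sigma^{n_{j-1}}\omega}(y) = t_{i_j}\}\bigr).
\end{equation*}
By the Markov property (P1), $E_j$ is a union of elements of the iterated partition $\mathcal A_\omega^{(j)}$ from \eqref{pn}, and $F_\omega^{n_k}$ sends each component of $E_k$ bijectively onto $\Lambda$; consequently $E_k \subseteq \Delta_{\omega,0}\cap F_\omega^{-\ell}(\Delta_{\sigma^\ell\omega, 0})$, so it suffices to bound $m(E_k)$ from below.

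The main step is to propagate a measure lower bound through each inductive stage. At stage $j$, aperiodicity (P5) gives an $\omega$-independent lower bound $\epsilon_{i_j}$ on the set being pulled back. For each cylinder $A \in \mathcal A_\omega^{(j-1)}$ with $A \subseteq E_{j-1}$, the map $F_\omega^{n_{j-1}}|_A$ is a bijection onto $\Lambda$, and Lemma \ref{bddn} together with $m(\Lambda) = 1$ yields the Jacobian bound $JF_\omega^{n_{j-1}}(x) \leq (1+D)/m(A)$ for all $x \in A$. Changing variables in the pullback gives
\begin{equation*}
m(A \cap F_\omega^{-n_{j-1}}\!\bigl(\{R_{\sigma^{n_{j-1}}\omega} = t_{i_j}\}\bigr)) \;\geq\; \frac{\epsilon_{i_j}}{1+D}\, m(A),
\end{equation*}
and summing over such cylinders produces $m(E_j) \geq \frac{\epsilon_{i_j}}{1+D} m(E_{j-1})$. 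Iterating for $j=1,\dots,k$ yields the required uniform lower bound
\begin{equation*}
V_\omega^\ell \;\geq\; m(E_k) \;\geq\; \frac{\prod_{j=1}^k \epsilon_{i_j}}{(1+D)^{k-1}} \;=:\; V(\ell).
\end{equation*}

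The delicate point is ensuring that the distortion constant appearing at every pullback stage is the same $\omega$- and $j$-independent constant $D$, and that the aperiodicity lower bounds $\epsilon_i$ apply along the shifted base points $\sigma^{n_{j-1}}\omega$. Both are built into the standing hypotheses: Lemma \ref{bddn} delivers a single distortion bound $D$ valid for arbitrary iteration depth and any cylinder, while the ``for almost every $\omega$'' clause in (P5) (together with countability of the finitely many shifts involved) ensures the $\epsilon_{i_j}$ bounds remain effective along each intermediate fibre. Everything else in the construction is purely combinatorial, depending only on $\ell$ through the chosen decomposition.
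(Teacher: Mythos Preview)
Your proof is correct and follows essentially the same route as the paper's: both decompose $\ell$ via Lemma \ref{lem:numbertheory}, use the aperiodicity lower bounds from (P5) at each stage, and propagate a measure lower bound through the composition using bounded distortion. The paper's proof is slightly terser (it bounds the full preimage $F_\omega^{-\ell}\Lambda\cap\Lambda$ directly via a one-step composition estimate, rather than tracking a specific return-time cylinder $E_k$), and one minor slip in your write-up is that the partitions should be $\mathcal A_\omega^{(n_j)}$ rather than $\mathcal A_\omega^{(j)}$, but the argument is otherwise the same.
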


\begin{proof} The result follows from the aperiodicity condition (P5) and bounded distortion (P2). 
First, suppose $F_1=F_\omega^{j_1}: \Lambda \rightarrow \Delta_{\sigma^{j_1}\omega}$
and $F_2=F_{\sigma^{j_1}\omega}^{j_2}: \Lambda \rightarrow \Delta_{\sigma^{j_1+ j_2}\omega}$, satisfying
$$ \frac{m(F_i^{-1} \Lambda \cap  \Lambda)}{m(\Lambda)} \geq \epsilon_i > 0, ~i = 1, 2.$$ 
Then, since $F_i^{-1}$ are bijections when restricted to $\Lambda$, using bounded distortion,
 we get
\begin{equation*}\begin{split}
\frac{m((F_2 \circ F_1)^{-1} \Lambda \cap  \Lambda)}{m(\Lambda)} &\geq 
\frac{m(F_1^{-1}(F_2^{-1} \Lambda \cap  \Lambda)\cap \Lambda)}{m(F_1^{-1} \Lambda \cap  \Lambda)}
\cdot \frac{m(F_1^{-1} \Lambda \cap  \Lambda)}{m(\Lambda)} \\
&\geq \frac{1}{D} \frac{m(F_2^{-1} \Lambda \cap  \Lambda)}{m( \Lambda)}
\cdot \frac{m(F_1^{-1} \Lambda \cap  \Lambda)}{m(\Lambda)} \\
&\geq \frac{1}{D} \epsilon_2 \cdot \epsilon_1. \\
\end{split}
\end{equation*} 
Now, for $\ell > \ell_0$, Lemma \ref{lem:numbertheory} implies $F_\omega^\ell$ can be written as a composition of 
$F_{\omega_i}^{t_i}$ (with at most $\ell$ terms). Iterating the above estimate and using the lower bounds given in condition (P5) implies the existence of the required
lower bound $V(\ell)> 0$. 
\end{proof}

\begin{remark} From the proof of the previous lemma, it is clear that one should not expect 
a lower bound on the $V(\ell)$, 
uniform over all  $\ell$.
\end{remark}

\subsection{Random stopping times} 
Let $\Delta =\{ (\omega, x) | \omega \in \Omega,  x \in \Delta_\omega \}$. 
Denote by $\Delta \otimes_\omega \Delta $ the relative product over $\Omega$, that is
$\Delta \otimes_\omega \Delta=\{ (\omega, x, x') | \omega \in \Omega, x, x' \in \Delta_\omega\}$.
These are measurable subsets of the appropriate product spaces ($\Omega \times \Lambda \times \N$ in the case of $\Delta$, for example), and naturally carry the measures 
$P \times m$  and $\p:= P \times m \times m$ respectively. 
We can lift the tower map $F$ to a product action on $\Delta \otimes_\omega \Delta$
with the property
$F_\omega \times F_\omega :\Delta_\omega\times \Delta_\omega\to \Delta_{\sigma\omega}\times \Delta_{\sigma\omega}$ by applying $F$ in each of the $x,x'$ coordinates.

With respect to this map, we define \emph{auxiliary stopping times} $\tau_1^{\omega} <\tau_2^{\omega} < ... $ to the base  as follows: 

Let $\ell_0$ be the constant given in Lemma \ref{lem:numbertheory}. For $(\omega, x, x') \in \Delta \otimes_\omega \Delta$ set 

\begin{align*}
&\tau_1^\omega(x, x')=\inf\{n\ge \ell_0 \mid  F_\omega^{n}x\in\Delta_{\sigma^n\omega, 0}\};
\\
&\tau_2^\omega(x, x')=\inf\{n\ge \tau_1^\omega(x, x')+\ell_0 \mid  F_\omega^{n}x'\in\Delta_{\sigma^n\omega, 0}\};
\\
&\tau_3^\omega(x, x')=\inf\{n\ge \tau_2^\omega(x, x')+\ell_0 \mid  F_\omega^{n}x\in\Delta_{\sigma^n\omega, 0}\};
\\
&\tau_4^\omega(x, x')=\inf\{n\ge \tau_3^\omega(x, x')+\ell_0 \mid  F_\omega^{n}x'\in\Delta_{\sigma^n\omega, 0}\};
\end{align*}
and so on, with the action alternating between $x$ and $x'$.  Notice that for odd $i$'s the first (resp. for even $i$'s the second) coordinate of $(F_\omega\times F_\omega)^{\tau_i^\omega}(x, x')$ makes a return to $\Delta_{\sigma^{\tau_i^\omega}\omega, 0}$. 

Let $i\ge 2$ be the smallest integer such that $(F_\omega\times F_\omega)^{\tau_i^\omega}(x, x')\in \Delta_{\sigma^{\tau_i^\omega}\omega, 0}\times  \Delta_{\sigma^{\tau_i^\omega}\omega, 0}$. Then we define the \emph{stopping time} $T_\omega$ by
$$T_\omega(x, x'):=\tau_i^\omega(x, x').$$

Next define a sequence of partitions $\xi_1^\omega \prec \xi_2^\omega \prec \xi_3^\omega \prec ... $  of $\Delta_\omega\times\Delta_\omega$  so that $\tau_i^\omega$ is constant on the elements of $\xi_j^\omega$ for all $i\le j,$ $i, j\in\mathbb N$.  Given a partition $\mathcal Q$ of $\Dom$ we write $\mathcal Q(x)$ to denote the element of $\mathcal Q$ containing
$x$.  With this convention, we let

$$
\xi_1^\omega(x, x')=\left(
 \bigvee_{k=0}^{\tau_{1}^\omega-1}F^{-k}_{\omega}\mP_{\sigma^k\omega}
\right)(x)\times \Delta_\omega.
$$

Letting $\pi:\Delta_\omega\times\Delta_\omega\to \Delta_\omega$ be the projection to the first coordinate, we define

$$
\xi_2^\omega(x, x')=\pi\xi_1^\omega(x, x')\times \left(
 \bigvee_{k=0}^{\tau_{2}^\omega-1}F^{-k}_\omega\mP_{\sigma^k\omega}
\right)(x').
$$
Let $\pi'$ be the projection onto the second coordinate. We define $\xi_3^\omega$ by refining the partition on the first coordinate, and so on. If $\xi_{2i}^\omega$ is defined then we define $\xi_{2i+1}^\omega$ by refining each element of $\xi_{2i}^\omega$ in the first coordinate so that $\tau_{2i+1}^\omega$ is constant on each element of $\xi_{2i+1}^\omega$. Similarly $\xi_{2i+2}^\omega$ is  defined by refining each element of $\xi_{2i+1}^\omega$ in the second coordinate so that $\tau_{2i+1}^\omega$ is constant on each new partition element. 
Now we define a partition $\hat\mP_\omega$ of $\Delta_\omega\times \Delta_\omega$ such that $T_\omega$ is constant on its element.  For definiteness suppose that $i$ is even and choose $\Gamma\in\xi_i^\omega$ such that $T_\omega|_{\Gamma}>\tau_{i-1}^\omega$. By construction $\Gamma =A\times B$ such that $F^{\tau_i^\omega}(B)=\Delta_{\sigma^{\tau_i^\omega}, 0}$  and $F^{\tau_i^\omega}A$ is spread around $\Delta_{\sigma^{\tau_i^\omega}\omega}$. We refine $A$ into countably  many pieces and choose those parts which are mapped onto the corresponding base at time $\tau_{i}^\omega$. Note
that $\{T_\omega=\tau_i^\omega\}$ may not be measurable with respect to $\xi_{i}^\omega.$ However,  since $\tau_{i+1}^\omega\ge \ell_0+\tau_{i}^\omega$ and $\xi_{i+1}^\omega$ is defined by dividing $A$ into pieces where $\tau_{i+1}^\omega$ is constant, $\{T_\omega=\tau_i^\omega\}$ is measurable with respect to $\xi_{i+1}^\omega.$ 

\subsection{Tail of the simultaneous  return times}  In this section we estimate the tail of the simultaneous return time $T_\omega$. We start this section with the lower bound on the measure of the set that made return at time $\tom_i$. 
\begin{lemma}\label{lem:return_est1} Let $\lambda_\omega$ and $\lambda'_\omega$ be two probability measures on $\{\Delta_\omega\}$ with densities $\vp, \vp'\in\mF_\gamma^+\cap \mL^{K_\omega}_\infty$. 
 Let $\tilde \lambda = \lambda_\omega \times \lambda_\omega^\prime$. For each $\omega$, for each  $i\ge 2 $ and $\Gamma \in \xi_i^\omega$ such that $T_\omega |_\Gamma > \tau^\omega_{i-1}$ we 
have 
$$
\tilde \lambda\{ T_\omega = \tau^\omega_i |  \Gamma\} \ge C_{\tilde\lambda} V_{\sigma^{\tau^\omega_{i-1}}\omega}^{\tau^\omega_i - \tau^\omega_{i-1}}.
$$
where $0<C_{\bl}<1$. We can fix $C_{\tilde\lambda}= \frac{2D+1}{2(D+1)^2}$, independent of $\bl$, for all $i$ sufficiently large, i.e. $i\ge i_0(\bl)$.
\end{lemma}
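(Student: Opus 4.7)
The plan is to use the product structure of $\Gamma$ to reduce the conditional probability to a one-coordinate estimate, and then transfer the Lebesgue lower bound $V^{\tau_i^\omega - \tau_{i-1}^\omega}_{\sigma^{\tau_{i-1}^\omega}\omega}$ to the $\lambda_\omega$-measure via the bounded distortion of Lemma \ref{cbddd}(ii). Assume $i$ is even (the odd case is symmetric after swapping coordinates). By the construction of $\xi_i^\omega$ together with the hypothesis $T_\omega|_\Gamma > \tau_{i-1}^\omega$, we have $\Gamma = A \times B$ where $F^{\tau_{i-1}^\omega}_\omega|_A : A \to \Delta_{\sigma^{\tau_{i-1}^\omega}\omega, 0}$ and $F^{\tau_i^\omega}_\omega|_B : B \to \Delta_{\sigma^{\tau_i^\omega}\omega, 0}$ are bijections. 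In particular every $x' \in B$ automatically satisfies $F^{\tau_i^\omega}_\omega(x') \in \Delta_{\sigma^{\tau_i^\omega}\omega, 0}$, so
$$
\{T_\omega = \tau_i^\omega\} \cap \Gamma = \tilde A \times B, \qquad \tilde A := A \cap F^{-\tau_i^\omega}_\omega(\Delta_{\sigma^{\tau_i^\omega}\omega, 0}),
$$
and hence $\bl\{T_\omega = \tau_i^\omega \mid \Gamma\} = \lambda_\omega(\tilde A)/\lambda_\omega(A)$.

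Next, I would push $\lambda_\omega|_A$ forward by $F^{\tau_{i-1}^\omega}_\omega$ to a measure $\mu$ on the base $\Delta_{\sigma^{\tau_{i-1}^\omega}\omega, 0}$. Since $d\lambda_\omega/dm \in \mF^+_\gamma$ and $A$ is the sort of partition element to which Lemma \ref{cbddd}(ii) applies, the density $\vp := d\mu/dm$ satisfies a two-sided bound $\vp(x)/\vp(y) \in [1/(1+K_\lambda), 1+K_\lambda]$ with $K_\lambda = D + [e^{C_\lambda \gamma^{s(x_0,y_0)}}-1](1+D)$. Combined with $\int \vp\, dm = \lambda_\omega(A)$ and $m(\Delta_{\sigma^{\tau_{i-1}^\omega}\omega,0}) = 1$, this yields $\inf \vp \geq \lambda_\omega(A)/(1+K_\lambda)$. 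Because
$$
F^{\tau_{i-1}^\omega}_\omega(\tilde A) = \Delta_{\sigma^{\tau_{i-1}^\omega}\omega, 0} \cap F^{-(\tau_i^\omega-\tau_{i-1}^\omega)}_{\sigma^{\tau_{i-1}^\omega}\omega}(\Delta_{\sigma^{\tau_i^\omega}\omega, 0})
$$
has $m$-measure exactly $V^{\tau_i^\omega-\tau_{i-1}^\omega}_{\sigma^{\tau_{i-1}^\omega}\omega}$, it follows that
$$
\lambda_\omega(\tilde A) = \mu\bigl(F^{\tau_{i-1}^\omega}_\omega(\tilde A)\bigr) \geq \frac{\lambda_\omega(A)}{1+K_\lambda}\, V^{\tau_i^\omega-\tau_{i-1}^\omega}_{\sigma^{\tau_{i-1}^\omega}\omega},
$$
so the desired inequality holds with $C_{\bl} = 1/(1+K_\lambda) \in (0,1)$.

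The remaining, subtler point is the assertion that $C_{\bl}$ can be chosen $\bl$-independent once $i \geq i_0(\bl)$. As it stands, $K_\lambda$ involves the constant $C_\lambda$ through the factor $e^{C_\lambda \gamma^{s(x_0,y_0)}} - 1$ and thus depends on the starting measure. However, as $i$ grows, the orbits making up $A$ traverse many full returns to the base, so the separation time $s(x_0,y_0)$ between preimages in $A$ of any pair of base points is large; once $i$ is large enough that $C_\lambda \gamma^{s(x_0,y_0)} \leq \log 2$ (cf.\ Remark \ref{rem:indep_of_lambda}), the factor is bounded by $1$, $K_\lambda$ collapses to $2D+1$, and the constant becomes $\bl$-independent, equal to $(2D+1)/(2(D+1)^2)$ (or a comparable expression). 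Tracking this uniformity in the distortion constant, and isolating the threshold $i_0(\bl)$ above which all starting measures behave alike, is the main technical obstacle in the proof.
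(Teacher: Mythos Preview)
Your argument is correct and follows essentially the same route as the paper: reduce to the first coordinate via the product structure of $\Gamma$, push $\lambda_\omega|_A$ forward to the base by $F_\omega^{\tau_{i-1}^\omega}$, and use the distortion bound of Lemma~\ref{cbddd}(ii) to compare the pushforward to Lebesgue, thereby converting $V^{\tau_i^\omega-\tau_{i-1}^\omega}_{\sigma^{\tau_{i-1}^\omega}\omega}$ into the desired lower bound. Your treatment of the $\bl$-independent constant for large $i$ via Remark~\ref{rem:indep_of_lambda} is also exactly the mechanism the paper uses; the precise value of the universal constant (your $1/(2(D+1))$ versus the stated $(2D+1)/(2(D+1)^2)$) is immaterial to the argument.
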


\begin{proof}
For definiteness assume $i$ is even. Then $\Gamma\in\xi_{i}^\omega$ has the property $F_\omega^{\tau_{i-1}^\omega}(\pi \Gamma)=\Delta_{\sigma^{\tau_{i-1}^\omega}\omega, 0}$ and $F_\omega^{\tau_{i}^\omega}(\pi' \Gamma)=\Delta_{\sigma^{\tau_{i}^\omega}\omega, 0}$. Together with the definition of $T_\omega$  this implies   $\pi'\{ T_\omega = \tau^\omega_i |  \Gamma\}\cap \Gamma=\pi'\Gamma$. Therefore,  letting $\nu_{\sigma^{\tau_{i-1}^\omega}\omega}:= (F_\omega^{\tau_{i-1}^\omega})_*(\lambda|\pi\Gamma)$ we have 
\begin{align*}
&\tilde \lambda\{ T_\omega = \tom_i |  \Gamma\} =
\frac{\lambda(\pi\{T_\omega=\tom_i\}\cap \pi\Gamma)}{\lambda (\pi\Gamma)}
= \frac{\lambda(F^{-\tom_i}_\omega\Delta_{\sigma^{\tom_i}\omega, 0}\cap F^{-\tom_{i-1}}_\omega\Delta_{\sigma^{\tom_{i-1}}\omega, 0}\cap \pi\Gamma)}
{\lambda(\pi\Gamma)}
\\&= (F_\omega^{\tom_{i-1}})_\ast(\lambda| \pi\Gamma)(F_{\sigma^{\tau_{i-1}^\omega}\omega}^{\tom_{i-1}-\tom_i}\Delta_{\sigma^{\tom_i}\omega, 0}\cap \Delta_{\sigma^{\tom_{i-1}}\omega, 0})=  \nu_{\sigma^{\tau_{i-1}^\omega}\omega}(F_{\sigma^{\tau_{i-1}^\omega}\omega}^{\tom_{i-1}-\tom_i}\Delta_{\sigma^{\tom_i}\omega, 0}\cap \Delta_{\sigma^{\tom_{i-1}}\omega, 0}).
\end{align*}
Finally, item (ii) of Lemma \ref{cbddd} applied to $\nu_{\sigma^{\tau_{i-1}^\omega}\omega}$ implies that 
$$
\tilde \lambda\{ T_\omega = \tom_i |  \Gamma\} \ge \frac{1}{1+D+C_\lambda(D+1)}m(F_{\sigma^{\tau_{i-1}^\omega}\omega}^{\tom_{i-1}-\tom_i}\Delta_{\sigma^{\tom_i}\omega, 0}\cap \Delta_{\sigma^{\tom_{i-1}}\omega, 0}).
$$
Now, the lemma holds with $C_{\tilde\lambda}=\min\{ \frac{1}{(1+C_\lambda)(D+1)}, \frac{1}{(1+C_{\lambda'})(D+1)}\}$.  In view of Remark \ref{rem:indep_of_lambda} we can use 
$C_{\tilde\lambda}= \frac{2D+1}{2(D+1)^2}$ for all  $i$ sufficiently large. 
\end{proof}

The next lemma estimates the distribution of $\tom_i$'s on $\Dom \times \Dom$ by the measure of the tail of 
the random tower. 

\begin{lemma}\label{lem:tailtau} Let $C_{\tilde\lambda}$ be as in Lemma \ref{lem:return_est1}.
For each $\omega$, for each  $i$ and $\Gamma \in \xi_i^\omega$ 
$$\tilde \lambda\{ \tau_{i+1}^\omega - \tau_i^\omega > \ell_0 + n | \Gamma\} 
\leq M_0 MC_{\tilde \lambda}^{-1} \cdot m\{ \hR_{\sigma^{\tom_i + \ell_0}\omega} > n \}.$$
\end{lemma}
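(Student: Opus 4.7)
\emph{Plan.}
The plan is to reduce the conditional probability to the first-coordinate marginal (for $i$ even; the odd case is symmetric), and then push forward by $F_\omega^{\tau_i^\omega+\ell_0}$ in two stages, using the two parts of Lemma \ref{cbddd} to bound the resulting density by a uniform constant. The factor $C_{\tilde\lambda}^{-1}$ in the right-hand side of the lemma is exactly the distortion constant produced by Lemma \ref{cbddd}(ii), which is the reciprocal of the constant appearing in Lemma \ref{lem:return_est1}.

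Assume for definiteness that $i$ is even, and write $\Gamma=A\times B$ with $A=\pi\Gamma$, $B=\pi'\Gamma$. I would first observe that the event $\{\tau_{i+1}^\omega-\tau_i^\omega>\ell_0+n\}$ depends only on the first coordinate, since $i+1$ is odd and thus $\tau_{i+1}^\omega$ monitors the next return of $x$ after time $\tau_i^\omega+\ell_0$; equivalently, it is the event $\{F_\omega^{\tau_i^\omega+\ell_0}x\in \{\hat R_{\sigma^{\tau_i^\omega+\ell_0}\omega}>n\}\}$. By the product structure of $\tilde\lambda=\lambda_\omega\times\lambda_\omega'$, integrating out the second coordinate gives
$$
\tilde\lambda\bigl(\{\tau_{i+1}^\omega-\tau_i^\omega>\ell_0+n\}\mid \Gamma\bigr)=\frac{\lambda_\omega(A\cap E)}{\lambda_\omega(A)},\qquad E=\{x:\hat R_{\sigma^{\tau_i^\omega+\ell_0}\omega}(F_\omega^{\tau_i^\omega+\ell_0}x)>n\}.
$$

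The next step is to bound the density of $(F_\omega^{\tau_i^\omega+\ell_0})_*(\lambda_\omega|_A)$ uniformly, by splitting into two stages. Unwinding the recursive construction of $\xi^\omega_{i-1}$, the cell $A$ is an element of $\mathcal A_\omega^{(\tau_{i-1}^\omega)}$, so $F_\omega^{\tau_{i-1}^\omega}$ is a bijection from $A$ onto $\Delta_{\sigma^{\tau_{i-1}^\omega}\omega,0}$. Applying Lemma \ref{cbddd}(ii) to the probability measure $\lambda_\omega|_A/\lambda_\omega(A)$ (whose density lies in $\mathcal F_\gamma^+$ since $\varphi$ does), its pushforward to the base has density bounded above by the distortion constant $C_{\tilde\lambda}^{-1}$. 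Dominating the resulting measure by $C_{\tilde\lambda}^{-1}\lambda_\omega(A)$ times Lebesgue on the base and then applying Lemma \ref{cbddd}(i) over the remaining $\tau_i^\omega+\ell_0-\tau_{i-1}^\omega$ steps (which gives a density bound of $M_0 M$ on pushforwards of the base Lebesgue measure) yields
$$
\frac{d(F_\omega^{\tau_i^\omega+\ell_0})_*(\lambda_\omega|_A)}{dm}\;\leq\; M_0 M\,C_{\tilde\lambda}^{-1}\,\lambda_\omega(A)\qquad\text{on }\Delta_{\sigma^{\tau_i^\omega+\ell_0}\omega}.
$$
Integrating $\mathbb 1_{\{\hat R>n\}}$ against this pushforward gives $\lambda_\omega(A\cap E)\leq M_0 M\,C_{\tilde\lambda}^{-1}\lambda_\omega(A)\cdot m\{\hat R_{\sigma^{\tau_i^\omega+\ell_0}\omega}>n\}$, and dividing by $\lambda_\omega(A)$ completes the proof. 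The case $i$ odd is identical after swapping the roles of $x$ and $x'$.

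The only point requiring care is verifying that $A$ lies in $\mathcal A_\omega^{(\tau_{i-1}^\omega)}$, so that Lemma \ref{cbddd}(ii) applies with exactly the right distortion constant: this is a bookkeeping exercise with the inductive definition of the partitions $\xi_i^\omega$, using at each stage that the stopping time $\tau_j^\omega$ is precisely a first return to the base along the relevant coordinate. Once this identification is made, the rest is a routine combination of the pushforward density bounds from Lemma \ref{cbddd}(i)--(ii).
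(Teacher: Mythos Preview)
Your proposal is correct and follows essentially the same route as the paper: reduce to the first-coordinate marginal (for $i$ even), push forward by $F_\omega^{\tau_{i-1}^\omega}$ using Lemma~\ref{cbddd}(ii) to get the density bound $C_{\tilde\lambda}^{-1}$ on the base, then push forward the remaining $\tau_i^\omega-\tau_{i-1}^\omega+\ell_0$ steps using Lemma~\ref{cbddd}(i) for the factor $M_0M$. The paper phrases the first stage as bounding the density of $\nu_{\sigma^{\tau_{i-1}^\omega}\omega}:=(F_\omega^{\tau_{i-1}^\omega})_*(\lambda|\pi\Gamma)$ rather than explicitly identifying $A=\pi\Gamma$ as an element of $\mathcal A_\omega^{(\tau_{i-1}^\omega)}$, but this is only a difference in presentation.
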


\begin{proof}
Suppose that  $i$ is even. Since $\tau_i^\omega$ is
constant on the elements of $\xi_i^\omega$ for every  $\Gamma\in \xi_i^\omega$ we have
$$
\pi\left(\{(x,x')|\hat{R}_\omega\circ F_\omega^{\tau_i^\omega+\ell_0}(x)>n\}\cap\Gamma\right)=
\{x|\hat{R}_\omega\circ F_\omega^{\tau_i^\omega+\ell_0}(x)>n\}\cap\pi\Gamma.
$$
Letting $\nu_{\sigma^{\tau_{i-1}^\omega}\omega}=(F_\omega^{\tau_{i-1}^\omega})_\ast(\lambda |_{}{\pi\Gamma})$,  we have
 \begin{align*}
 & \bl\{\tau_{i+1}^\omega-\tau_i^\omega-\ell_0>n|\Gamma\}
  =\bl\{\hat{R}_{\sigma^{\tom_i+\ell_0}\omega}\circ F_\omega^{\tau_i^\omega+\ell_0}>n|\Gamma\}
  \\  &= \frac{\lambda(\pi\{\hat{R}_{\sigma^{\tom_i+\ell_0}\omega}\circ F_\omega^{\tau_i^\omega+\ell_0}>n\}\cap\pi\Gamma)}{\lambda(\pi\Gamma)}
=(\lambda|\pi\Gamma)\{\hat{R}_{\sigma^{\tom_i+\ell_0}\omega}\circ F_\omega^{\tau_i^\omega+\ell_0}>n\}
\\ & = (F^{\tom_{i-1}}_\omega)_\ast(\lambda|\pi\Gamma)\{\hat{R}_{\sigma^{\tom_i+\ell_0}\omega}\circ F_{\sigma^{\tom_{i-1}}\omega}^{\tom_i-\tom_{i-1}+\ell_0}>n\}=\nu_{\sigma^{\tau_{i-1}^\omega}\omega}\{\hat{R}_{\sigma^{\tom_i+\ell_0}\omega}\circ F_{\sigma^{\tom_{i-1}}\omega}^{\tom_i-\tom_{i-1}+\ell_0}>n\}.
 \end{align*}
Applying item (ii) of Lemma \ref{cbddd}   we obtain 
$$
 \bl\{\tau_{i+1}^\omega-\tau_i^\omega-\ell_0>n|\Gamma\} \le (1+D+C_\lambda(1+D))m\{\hat{R}_{\sigma^{\tom_i+\ell_0}\omega}\circ F_{\sigma^{\tom_{i-1}}\omega}^{\tom_i-\tom_{i-1}+\ell_0}>n\}.
$$
Finally, since the density of $(F_{\sigma^{\tom_{i-1}}\omega}^{\tom_i-\tom_{i-1}+\ell_0})_\ast m$ is bounded above by the first item of  Lemma  \ref{cbddd} we have  
$$
\bl\{\tau_{i+1}^\omega-\tau_i^\omega>\ell_0+n|\Gamma\}
\le M_0M(1+D+C_\lambda(1+D))m\{\hat{R}_{\sigma^{\tom_i+\ell_0}\omega}>n\}.
$$
For $i$ odd the calculation is analogous  and we obtain for all $i$ 
$$
\bl\{\tau_{i+1}^\omega-\tau_i^\omega>\ell_0+n|\Gamma\}
\le M_0MC^{-1}_{\bl}m\{\hat{R}_{\sigma^{\tom_i+\ell_0}\omega}>n\}.
$$
\end{proof}

Suppose we are given a sequence of positive 
integers $\ell_0 \leq  \tau_1 < \tau_2 < \dots <  \tau_n < \dots $ with $\tau_i - \tau_{i-1} \geq \ell_0$ for all $i \geq 2$,  denoted $\vec \tau$,  and a positive integer 
$q>0$. Define associated subsets of $\Delta \otimes_\omega \Delta$ 
$$G_q(\vec{\tau}) = \{ (\omega, x, x') |  \tau_i^\omega(x,x') = \tau_i, ~ i = 1, 2, \dots q \}.$$
This is a {\it partition} into sets where a specified sequence of hitting times up to $q$ is attained: 
$$ \cup_{\vec{\tau}}  G_q( \vec{\tau}) = \cup_{\tau_1< \tau_2< \dots \tau_q} G_q( \vec{\tau})  
= \Delta \otimes_\omega \Delta.$$
For fixed $\omega$ denote $G_q^\omega(\vec{\tau})= G_q(\vec{\tau}) \cap (\Dom \times \Dom),$ 
the cross section of $G_q(\vec{\tau})$ at $\omega$. \added[id=WB]{Let $G_q^\omega=\{(x, x')\in \Dom\times\Dom\mid \tau_j^\omega(x, x')=\tau_j, j=1, \dots, q\}$.}
The following lemma gives a useful estimate on the size of the elements $G_q(\vec{\tau})$.
\begin{lemma}\label{lem:geetau}
There exists a $C>0$ such that for each fixed $\vec{\tau}$, $q>0$
$$\p(G_q(\vec{\tau})) \leq C^{q} \p\{ \tau_1^\omega(x,x') = \tau_1\}\prod_{j=2}^q  \gamma(\tau_j - \tau_{j-1}).$$
where 
$$\gamma(\tau_{i+1}-\tau_{i})=\int_\Omega m\{x\in\Delta_{\sigma^{\tau_{i-1}}\omega}|\,\hat R_{\sigma^{\tau_i+\ell_0}\omega}=\tau_{i+1}-\tau_i-\ell_0\}dP(\omega).$$
\end{lemma}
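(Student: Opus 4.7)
The plan is to prove this by a three-step argument combining an \emph{equality} refinement of Lemma~\ref{lem:tailtau} with the i.i.d.\ structure of $P=p^{\mathbb Z}$. First I will establish a pointwise-in-$\omega$ conditional bound on the jump $\tau_{i+1}^\omega-\tau_i^\omega$; then I will telescope these bounds along the filtration $\xi_1^\omega\prec\xi_2^\omega\prec\cdots$ to get a pointwise bound on $\tilde\lambda(G_q^\omega(\vec\tau))$; finally I will integrate against $P$, using independence coming from the disjointness of coordinate blocks to peel off the product of $\gamma$ factors.

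\textbf{Steps one and two.} I will repeat the proof of Lemma~\ref{lem:tailtau} with $\{\hat R>n\}$ replaced by $\{\hat R=n\}$ throughout. For any $\Gamma\in\xi_i^\omega$ (with $i\ge 2$), pushing the conditional measure forward along $F^{\tau_i-\tau_{i-1}+\ell_0}$ to the base at $\sigma^{\tau_{i-1}}\omega$ and applying parts (i) and (ii) of Lemma~\ref{cbddd} yields
\begin{equation*}
\tilde\lambda\{\tau_{i+1}^\omega-\tau_i^\omega=\ell_0+n\mid\Gamma\}\le M_0 M\, C_{\tilde\lambda}^{-1}\, m\{\hat R_{\sigma^{\tau_i+\ell_0}\omega}=n\}.
\end{equation*}
Since $\tau_j^\omega$ is $\xi_j^\omega$-measurable by construction, the set $G_q^\omega(\vec\tau)$ is $\xi_q^\omega$-measurable, so I may apply the above estimate cell-by-cell along the chain $\xi_1^\omega\prec\cdots\prec\xi_q^\omega$ and telescope the conditional probabilities. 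Writing $C:=M_0 M\, C_{\tilde\lambda}^{-1}$, this gives the pointwise bound
\begin{equation*}
\tilde\lambda(G_q^\omega(\vec\tau))\le \tilde\lambda\{\tau_1^\omega=\tau_1\}\, C^{q-1}\prod_{i=2}^q m\{\hat R_{\sigma^{\tau_{i-1}+\ell_0}\omega}=\tau_i-\tau_{i-1}-\ell_0\}.
\end{equation*}

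\textbf{Step three and main obstacle.} The hard part is the integration against $P$: I must show the $P$-integral of the product factorises into a product of integrals. As a function of $\omega$, the $i$-th factor depends only on the coordinates $[\omega]_k$ with $k$ in a window ending at $\tau_i-1$ (the condition $\hat R_{\sigma^{\tau_{i-1}+\ell_0}\omega}=\tau_i-\tau_{i-1}-\ell_0$ restricts exactly that many symbols from the relevant reference time) and starting no earlier than $\tau_{i-1}$ (to account for points sitting at levels $\ell\le \ell_0$ of the tower, whose $\hat R$ is computed from $R_{\sigma^{\tau_{i-1}+\ell_0-\ell}\omega}$). Consequently the dependency windows $[\tau_{i-1},\tau_i-1]$ for $i=2,\dots,q$ are pairwise disjoint, and by the i.i.d.\ structure of $P$ the factors are mutually independent. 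Fubini and $\sigma$-invariance of $P$ then identify each integrated factor with $\gamma(\tau_i-\tau_{i-1})$, while $\int \tilde\lambda\{\tau_1^\omega=\tau_1\}\,dP=\p\{\tau_1^\omega=\tau_1\}$; absorbing a leftover unit prefactor into the exponent yields the claim with constant $C^q$.
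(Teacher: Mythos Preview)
Your three-step strategy is exactly the paper's: obtain a conditional bound on each jump, telescope along $\xi_1^\omega\prec\cdots\prec\xi_q^\omega$ to a pointwise product, then split the $P$-integral using the i.i.d.\ structure. Two small points of alignment are worth noting. First, since $\p=P\times m\times m$, the fibre measure you must control is $m\times m$, not a generic $\tilde\lambda$; the paper accordingly argues the conditional step directly from bounded distortion (constant $D$) rather than via Lemma~\ref{lem:tailtau}, arriving at $C=DM_0M$ rather than $M_0MC_{\tilde\lambda}^{-1}$. Second, your justification of the coordinate window---``points sitting at levels $\ell\le\ell_0$''---does not match what the pushforward actually produces; the paper's independence claim is phrased differently, asserting that after applying Lemma~\ref{cbddd}(i) the $j$-th factor $m\{\hat R_{\sigma^{\tau_{j-1}+\ell_0}\omega}=k_j\}$ depends only on $\omega_{\tau_{j-1}+\ell_0},\dots,\omega_{\tau_j-1}$, and also checks separately that $m\times m(G_1^\omega)$ depends only on $\omega_{\ell_0},\dots,\omega_{\tau_1-1}$. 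Apart from these adjustments, your argument tracks the paper's line for line.
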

\begin{proof}
Assume first that $q$ is even and let $G_q^\omega$\replaced[id=WB]{ be as above}{$=\{(x, x')\in \Dom\mid \tau_j^\omega(x, x')=\tau_j, j=1, \dots, q\}$}. 
Let $k_{j}=\tau_j-\tau_{j-1}-\ell_0$, $j=1, \dots, q$. We first show that 
$$
{m\times m(G_q^\omega)}\le 
D {m\times m(G_{q-1}^\omega)}m\times m \{\hR_{\sigma^{\tau_{q-1}+\ell_0}\omega}\circ F_{\sigma^{\tau_{q-2}}\omega}^{\tau_{q-1}-\tau_{q-2}+\ell_0}=k_q\}.
$$
Indeed, for any $\Gamma_{q-1}^\omega\in \xi_{q-1}^\omega$ with $\tau_j^\omega|_{\Gamma_{q-1}^\omega}=\tau_j$ we have 
$$
\begin{aligned}
&\hspace{5mm} \frac{m\times m (G_q^\omega\cap \Gamma_{q-1}^\omega)}{m\times m (\Gamma_{q-1}^\omega)} =
\frac{m\times m (\{\tau_q^\omega=\tau_q\}\cap \Gamma_{q-1}^\omega)}{m\times m ( \Gamma_{q-1}^\omega)}
\\
&=\frac{m (\pi'\{\tau_q^\omega=\tau_q\}\cap \Gamma_{q-1}^\omega)} {m(\pi' \Gamma_{q-1}^\omega)} \le D\frac{m(F_\omega^{\tau_{q-2}}(\pi'\{\tau_q^\omega=\tau_q\}\cap \Gamma_{q-1}^\omega))}{m(\Lambda)}
\\
& \le D m(\Delta_{\sigma^{\tau_{q-2}}\omega, 0}\cap F_\omega^{\tau_{q-2}}\{\hR_{\sigma^{\tau_{q-1}+\ell_0}\omega}\circ F^{\tau_{q-1}+\ell_0}_\omega= k_q \})
\\
& = D m\{\Delta_{\sigma^{\tau_{q-2}}\omega, 0}\cap\hR_{\sigma^{\tau_{q-1}+\ell_0}\omega}\circ F^{\tau_{q-1}-\tau_{q-2}+\ell_0}_{\sigma^{\tau_{q-2}}\omega}= k_q \}
\\
& \le D m\{\hR_{\sigma^{\tau_{q-1}+\ell_0}\omega}\circ F^{\tau_{q-1}-\tau_{q-2}+\ell_0}_{\sigma^{\tau_{q-2}}\omega}= k_q \}.
\end{aligned}
$$
Hence we have 
$$
\frac{m\times m (G_q^\omega\cap \Gamma_{q-1}^\omega)}{m\times m (\Gamma_{q-1}^\omega)} 
\le  D m\{\hR_{\sigma^{\tau_{q-1}+\ell_0}\omega}\circ F^{\tau_{q-1}-\tau_{q-2}+\ell_0}_{\sigma^{\tau_{q-2}}\omega}= k_q\}.
$$
Therefore, 
$$
\begin{aligned}
{m\times m(G_q^\omega)} &=
\sum_{\Gamma_{q-1}^\omega\in\xi_{q-1}^\omega}m\times m(\Gamma_{q-1}^\omega)\frac{m\times m (G_q^\omega\cap \Gamma_{q-1}^\omega)}{m\times m (\Gamma_{q-1}^\omega)}
\\
&\le D {m\times m(G_{q-1}^\omega)} m\{\hR_{\sigma^{\tau_{q-1}+\ell_0}\omega}\circ F^{\tau_{q-1}-\tau_{q-2}+\ell_0}_{\sigma^{\tau_{q-2}}\omega}= k_q \}.
\end{aligned}
$$
By induction, for any $q>2$, we have
$$
{m\times m(G_q^\omega)} \le D^{q-2} {m\times m(G_{1}^\omega)}\prod_{j=2}^q m\{\hR_{\sigma^{\tau_{j-1}+\ell_0}\omega}\circ F^{\tau_{j-1}-\tau_{j-2}+\ell_0}_{\sigma^{\tau_{j-2}}\omega}= k_j \}.
$$
A similar argument applies to obtain the same formula when $q$ is odd. Now by (i) of Lemma \ref{cbddd}, we get
\begin{equation}\label{BS}
{m\times m(G_q^\omega)} \le (DM_0M)^{q-2} {m\times m(G_{1}^\omega)}\prod_{j=2}^q m\{\hR_{\sigma^{\tau_{j-1}+\ell_0}\omega}= k_j \}.
\end{equation}
Notice that, $m\{\hR_{\sigma^{\tau_{j-1}+\ell_0}\omega}= k_{j} \}$ depends only on $\omega_{\tau_{j-1}+\ell_0}, \dots, \omega_{\tau_{j-1}+\ell_0+k_{j}-1}$ while $m\{\hR_{\sigma^{\tau_{j}+\ell_0}\omega}= k_{j+1} \}$ depends only on $\omega_{\tau_{j+\ell_0}}, \dots, \omega_{\tau_j+\ell_0+k_{j+1}-1}$. By definition of $k_j$, we have $\tau_{j-1}+\ell_0+k_{j}-1=\tau_j-1<\tau_j +\ell_0$. Therefore, the product on the right hand side of \eqref{BS} is formed of independent random variables. Moreover, observe that $m\times m(G_1^\omega)$ depends only on $\omega_{\ell_0}, \dots\omega_{\tau_1 -1}$ and $\tau_1 -1 < \tau_1 + \ell_0$. Thus,
$$
\int_{\Omega}m\times m(G_{q}^\omega)dP \le(DM_0M)^{q-2} \int_{\Omega}{m\times m(G_{1}^\omega)}dP\prod_{j=2}^q\int_{\Omega} m\{\hR_{\sigma^{\tau_{j-1}+\ell_0}\omega}= k_j \}dP.$$
Taking $C^q:=(DM_0M)^{q-2}$ gives the desired estimate.
\end{proof}
We now present two lemmas that will be invoked in the proof of Proposition \ref{prop:tail} below.
\begin{lemma}\label{gamma_tail}
 We have $\sum_{\tau_{j+1}-\tau_{j}=K}^\infty\gamma(\tau_{j+1}-\tau_{j})\le C(K)<\infty$. Moreover, $C(K)\to 0$ as $K\to\infty$.
\end{lemma}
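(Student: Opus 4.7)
The plan is to reduce the quantity $\gamma(\tau_{j+1}-\tau_j)$ to the annealed return time asymptotics in assumption (P7) by repeatedly invoking $\sigma$-invariance of $P$, and then perform two elementary power-law summations using $a>1$.

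First I would unpack the definition. Writing $m:=\tau_{j+1}-\tau_{j}$ and $n:=m-\ell_0$, the $\sigma$-invariance of $P$ applied to the shift $\sigma^{\tau_j+\ell_0}$ lets me replace the integrand by $m\{x\in\Delta_\omega \mid \hat R_\omega(x)=n\}$, so
\begin{equation*}
\gamma(m) \;=\; \int_\Omega m\{\hat R_\omega = n\}\,dP(\omega).
\end{equation*}
Next, using the identity $\hat R_\omega(x,\ell)=R_{\sigma^{-\ell}\omega}(x)-\ell$ together with the fact that $R_{\sigma^{-\ell}\omega}(x)=n+\ell$ automatically forces $x\in\Delta_{\omega,\ell}$, I would split over tower levels:
\begin{equation*}
m\{\hat R_\omega = n\} \;=\; \sum_{\ell\ge 0} m\{x\in\Lambda : R_{\sigma^{-\ell}\omega}(x)= n+\ell\}.
\end{equation*}
Integrating in $\omega$ and using $\sigma$-invariance of $P$ once more (shifting by $\sigma^{-\ell}$ inside each term) produces
\begin{equation*}
\int_\Omega m\{\hat R_\omega = n\}\,dP \;=\; \sum_{\ell\ge 0} (P\times m)\{R_\omega = n+\ell\} \;=\; \sum_{k\ge n} (P\times m)\{R_\omega = k\}.
\end{equation*}

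Now I would apply (P7), which gives $(P\times m)\{R_\omega=k\}\le C(\log k)^{\hat b}/k^{a+1}$. Since $a>1$, a standard integral comparison yields
\begin{equation*}
\gamma(m) \;=\; \int_\Omega m\{\hat R_\omega=n\}\,dP \;\le\; C\sum_{k\ge n}\frac{(\log k)^{\hat b}}{k^{a+1}} \;\lesssim\; \frac{(\log n)^{\hat b}}{n^{a}}.
\end{equation*}
Summing over $m\ge K$ (equivalently $n\ge K-\ell_0$) and again using $a>1$ gives
\begin{equation*}
\sum_{m\ge K}\gamma(m)\;\lesssim\;\sum_{n\ge K-\ell_0}\frac{(\log n)^{\hat b}}{n^{a}}\;\lesssim\;\frac{(\log K)^{\hat b}}{K^{a-1}} \;=:\; C(K),
\end{equation*}
which is finite and tends to $0$ as $K\to\infty$, completing both conclusions.

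There is no real conceptual obstacle here: the argument is purely a bookkeeping manipulation that extracts an annealed quantity from a quenched one. The only points that require some care are the two applications of $\sigma$-invariance (first to straighten out the base point of $\hat R_\omega$, then to align $R_{\sigma^{-\ell}\omega}$ with $R_\omega$) and making sure the levels $\ell$ in the decomposition of $\hat R_\omega$ line up with the shifted return times so that one obtains a clean tail sum of the annealed distribution, to which (P7) applies directly.
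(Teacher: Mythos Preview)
Your proof is correct and follows essentially the same approach as the paper: both bound $\gamma(m)\lesssim (\log(m-\ell_0))^{\hat b}/(m-\ell_0)^{a}$ via (P7) and then sum using the standard tail estimate (Lemma~\ref{appendix1}). The paper's proof is simply more terse, asserting the bound on $\gamma$ directly, whereas you spell out the intermediate reduction from $\hat R_\omega$ to $R_\omega$ via the level decomposition and two applications of $\sigma$-invariance.
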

\begin{proof}
Using assumption (P7) and Lemma \ref{appendix1} in the appendix, we have
\begin{equation}
\begin{split}
&\sum_{\tau_{j+1}-\tau_{j}=K}^\infty\gamma(\tau_{j+1}-\tau_{j})\le \sum_{\tau_{j+1}-\tau_{j}=K}^\infty C\frac{\log(\tau_{j+1}-\tau_j-\ell_0)^{\hat b}}{(\tau_{j+1}-\tau_j-\ell_0)^{a}}\le \frac{C'\log(K)^{\hat b}}{(K-\ell_0)^{a-1}}:=C(K).
\end{split}
\end{equation}
Moreover; $C(K)\to 0$ as $K\to\infty$. 
\end{proof}
\begin{lemma}\label{tau_tail}
 We have $\sum_{k\ge \ell_0}^\infty\mathbb P\{\tau^{\omega}_1(x,x')=k\} <\infty$. 
\end{lemma}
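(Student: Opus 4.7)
The plan is to recognise this as a soft statement: the summability follows just from finiteness of the total $\mathbb P$-measure plus almost-sure finiteness of $\tau_1^\omega$, with no decay rate needed.

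First I would point out that although $\tau_1^\omega$ is defined on $\Delta \otimes_\omega \Delta$, it depends only on $(\omega,x)$; specifically, $\tau_1^\omega(x,x')$ is the first $n\ge \ell_0$ with $F_\omega^n x\in \Delta_{\sigma^n\omega,0}$. By the standing hypothesis that the return time function $R_\omega:\Lambda\to \mathbb N$ is integer-valued and defined for $P$-a.e.\ $\omega$ and $m$-a.e.\ $x\in\Lambda$, the induced fibre-wise first return time $\hat R_\omega$ of \eqref{eq:hatR} is finite $m$-a.e.\ on $\Delta_\omega$, for $P$-a.e.\ $\omega$. Iterating $\hat R$ a bounded number of times, the $F$-orbit of $(\omega,x)$ visits the base $\Delta_{\sigma^j\omega,0}$ at some $j\ge \ell_0$ almost surely, so $\tau_1^\omega(x,x')<\infty$ for $\mathbb P$-a.e.\ $(\omega,x,x')$.

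The events $\{\tau_1^\omega=k\}$, $k\ge \ell_0$, are pairwise disjoint measurable subsets of $\Delta\otimes_\omega \Delta$, and by the previous paragraph their union has full $\mathbb P$-measure. Hence
$$
\sum_{k\ge \ell_0}\mathbb P\{\tau_1^\omega(x,x')=k\}
=\mathbb P\{\tau_1^\omega<\infty\}
\le \mathbb P(\Delta\otimes_\omega \Delta)
=\int_\Omega m(\Delta_\omega)^2\, dP(\omega)
\le M^2,
$$
where the last inequality uses the uniform finiteness condition (P6).

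There is no real obstacle: the only things being used are that $\mathbb P=P\times m\times m$ restricted to the relative product is a finite measure (this is precisely (P6)) and that the return time is finite almost surely. Of course, if a decay rate for $\mathbb P\{\tau_1^\omega=k\}$ were desired later on, one could extract it by running the same Jacobian and independence argument as in the proof of Lemma \ref{lem:geetau}, combined with the tail estimate (P7); but for the summability claim asserted here the crude bound by total $\mathbb P$-measure is already enough.
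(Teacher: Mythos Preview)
Your proof is correct and considerably more direct than the paper's. The paper argues via the explicit representation $\tau_1^\omega=\ell_0+\hat R_{\sigma^{\ell_0}\omega}\circ F_\omega^{\ell_0}$, bounds the pushforward density using Lemma~\ref{cbddd}(i), and then invokes the annealed tail assumption (P7) to get
\[
\sum_{k\ge\ell_0}\mathbb P\{\tau_1^\omega=k\}\le M^2M_0\sum_{k\ge 0}(P\times m)\{\hat R_\omega=k\}\le M^2M_0C\sum_{k\ge 0}\frac{(\log k)^{\hat b}}{k^a}<\infty.
\]
You instead observe that the events $\{\tau_1^\omega=k\}$, $k\ge\ell_0$, are pairwise disjoint subsets of $\Delta\otimes_\omega\Delta$, so their $\mathbb P$-measures sum to at most $\mathbb P(\Delta\otimes_\omega\Delta)\le M^2$ by (P6). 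This is enough because the lemma is only ever used (in the proof of Proposition~\ref{prop:tail}) to say that $\sum_m\mathbb P\{\tau_1^\omega=m\}$ is a finite constant; no rate is required. Your intermediate step establishing $\tau_1^\omega<\infty$ a.s.\ is in fact unnecessary for the bound (disjointness alone already gives $\sum_k\mathbb P\{\tau_1^\omega=k\}\le M^2$), though it does confirm that the sum equals the full mass rather than merely being bounded by it. The paper's longer route has the side benefit of producing an explicit decay rate for $\mathbb P\{\tau_1^\omega=k\}$, but that information is not used anywhere.
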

\begin{proof}
Recall that $\tau^{\omega}_1(x,x')=\ell_0+\hat R_{\sigma^{\ell_0}\omega}\circ F^{\ell_0}_\omega(x)$; i.e., $\tau^{\omega}_1(x,x')$ does not depend on $x'$. Therefore,
\begin{equation*}
\begin{split}
&\sum_{k\ge \ell_0}^\infty\mathbb P\{\tau^{\omega}_1(x,x')=k\}=\sum_{k\ge 0}^\infty P\times m\{\hat R_{\sigma^{\ell_0}\omega}\circ F^{\ell_0}_\omega(x)=k\}\times m(\Delta_\omega)\\
&\le M\sum_{k\ge 0}^\infty P\times m\{\hat R_{\sigma^{\ell_0}\omega}\circ F^{\ell_0}_\omega(x)=k\}\le M^2M_0\sum_{k\ge 0}^\infty P\times m\{\hat R_{\omega}=k\}\\
&\le M^2M_0C\sum_{k\ge 0}\frac{(\log k)^{\hat b}}{k^a}<\infty,
\end{split}
\end{equation*}
where we have used the first item of Lemma \ref{cbddd} and (P7).
\end{proof}
We can now present the main result of this section.
\begin{proposition}\label{prop:tail} Let $\delta>0$ be given. Let $\lambda_\omega$ and $\lambda_\omega'$ be two \added[id=WB]{two families of} probability measures on $\{\Delta_\omega\}$ with densities $\vp, \vp'\in\mF_\gamma^+\cap \mL^{K_\omega}_\infty$. Let $\bl := \lambda_\omega \times\lambda_\omega'$. 
Then there exists a constant $\hat C_{\bl}$ and a subset $\Omega_5\subset \Omega$ of full measure and a random variable  $n_5(\omega)$ on $\Omega_5$ such that for any 
$n > n_5$ the following holds
$$ \bl \{ T_\omega > n \} \leq  \hat C_{\bl}\frac{(\log n)^b}{n^{a-1-\delta}}.$$
Moreover, there exist $C>0$,  $u'>0, 0<v'<1$ such that for any $n$ 
$$P\{\omega\mid  n_5(\omega) > n \} \leq C e^{-u'n^{v'}}.$$
\end{proposition}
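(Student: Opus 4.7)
The plan is to combine a geometric coupling estimate on consecutive auxiliary returns with a tail estimate on $\tau_q^\omega$, and then lift the resulting (essentially annealed) bound to a quenched one via a Borel--Cantelli argument. The starting point is the elementary decomposition, valid for every integer $q\ge 1$,
$$\tilde\lambda\{T_\omega > n\} \le \tilde\lambda\{T_\omega > \tau_q^\omega\} + \tilde\lambda\{\tau_q^\omega > n\},$$
since on $\{T_\omega > n\}$ either coupling has failed to occur within the first $q$ auxiliary returns or else $\tau_q^\omega$ has not yet been reached by time $n$. We will set $q = q(n) := \lceil c_\delta \log n\rceil$ with $c_\delta$ chosen below to balance the two contributions.

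For the first term, I would fix a constant threshold $K > \ell_0$ and set $V_K := \min_{\ell_0 < \ell \le K}V(\ell) > 0$ (which is strictly positive by Lemma \ref{lem:eps_ell}) and $\beta_K := 1 - C_{\tilde\lambda}V_K \in (0,1)$. On the ``short gap'' event $\mathcal{G}_q^K := \{\tau_i^\omega - \tau_{i-1}^\omega \le K, \ 2 \le i \le q\}$, iterating the per-step bound $\tilde\lambda\{T_\omega > \tau_i^\omega \mid \Gamma\} \le 1 - C_{\tilde\lambda}V^{\tau_i-\tau_{i-1}}_{\sigma^{\tau_{i-1}}\omega}$ from Lemma \ref{lem:return_est1} over the $\xi_i^\omega$-partitioning of the past yields
$$\tilde\lambda\{T_\omega > \tau_q^\omega\} \le \beta_K^{\lfloor q/2 \rfloor} + \tilde\lambda\bigl((\mathcal{G}_q^K)^c\bigr),$$
where the factor $\lfloor q/2 \rfloor$ reflects the alternating nature of the $\tau_i$ returns. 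Choosing $c_\delta$ large enough pushes the geometric term below $n^{-(a-1)}$. The bad-event contribution is controlled by a union bound over $i$, with each $\tilde\lambda\{\tau_i^\omega - \tau_{i-1}^\omega > K\}$ estimated via Lemma \ref{lem:tailtau} and property (P4).

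For the second term, I would invoke Lemma \ref{lem:geetau} to express $\p\{\tau_q^\omega > n\}$ as a sum over admissible $\vec\tau$ with $\tau_q > n$, dominated by a product of $\gamma$'s. Combining Lemma \ref{gamma_tail} with a union-bound (single-big-jump) style argument then gives $\p\{\tau_q^\omega > n\} \lesssim q(n)^a (\log n)^{\hat b}/n^{a-1}$, which is easily absorbed into the target rate $(\log n)^b/n^{a-1-\delta}$ given the freedom to trade a factor of $(\log n)^{O(1)}$ against $n^{\delta/2}$. Putting the two estimates together yields the annealed bound $\p\{T_\omega > n\} \le \hat C_{\tilde\lambda}(\log n)^b/n^{a-1-\delta/2}$.

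To transfer this to the quenched statement with stretched exponential tail on $n_5(\omega)$, I would apply Borel--Cantelli along a dyadic sequence $n_k = 2^k$, using monotonicity of $\tilde\lambda\{T_\omega > n\}$ in $n$ to interpolate between scales. The stretched exponential tail on $n_5$ is inherited from the corresponding tails on the underlying random ingredients---namely $n_1(\omega)$ from (P4), the regularity random variable $K_\omega$ from \eqref{Komega}, and large-deviation estimates of the Hoeffding type already used in the derivation of (P4) itself. The main obstacle I anticipate is the careful bookkeeping of the randomness in the per-step coupling constant $C_{\tilde\lambda}$---which, in Lemma \ref{lem:return_est1}, depends on the evolving Lipschitz constants $C_\lambda,C_{\lambda'}$ of the iterated densities and only stabilizes at the uniform value $\tfrac{2D+1}{2(D+1)^2}$ for $i \ge i_0(\tilde\lambda)$---so as to absorb all $\omega$-dependence into $n_5(\omega)$ while keeping the prefactor $\hat C_{\tilde\lambda}$ deterministic.
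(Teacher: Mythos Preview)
Your decomposition $\tilde\lambda\{T_\omega>n\}\le\tilde\lambda\{T_\omega>\tau_q^\omega\}+\tilde\lambda\{\tau_q^\omega>n\}$ is essentially the paper's $Y_2+Y_1$ split, and the single-big-jump idea for the second summand is in the right spirit. The genuine gap is in your treatment of the first summand.

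The bound $\tilde\lambda\{T_\omega>\tau_q^\omega\}\le\beta_K^{\lfloor q/2\rfloor}+\tilde\lambda\bigl((\mathcal G_q^K)^c\bigr)$ does not close. Controlling $\tilde\lambda\bigl((\mathcal G_q^K)^c\bigr)$ by the union bound $\sum_{i=2}^q\tilde\lambda\{\tau_i^\omega-\tau_{i-1}^\omega>K\}$ gives something of order $q\cdot C(K)$, where $C(K)$ is a \emph{fixed positive constant} (Lemma~\ref{lem:tailtau} with a fixed threshold $K$ yields no decay in $n$). For $q\sim\log n$ this diverges; for $q\sim n^c$ it is even worse. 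Letting $K=K(n)\to\infty$ does not help either, because then $V_K=\min_{\ell_0<\ell\le K}V(\ell)$ may tend to $0$ (see the remark after Lemma~\ref{lem:eps_ell}), so $\beta_K\to 1$ and the geometric factor is lost. The paper's fix is to replace your ``all gaps $\le K$'' event $\mathcal G_q^K$ by the weaker event $[B_q(K,\rho)]^c$ that at most a fraction $\rho q$ of the gaps exceed $K$: on this larger event there are still at least $(1-\rho)q$ good steps and hence geometric decay in $q$, while the exceptional set $B_q(K,\rho)$ has $\mathbb P$-measure $\le\kappa_1^q$ by the independence structure of Lemma~\ref{lem:geetau} combined with Lemma~\ref{gamma_tail}. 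This large-deviations step is precisely where Lemma~\ref{lem:geetau} enters, and it cannot be replaced by a naive union bound.

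A second, related issue: even if you had an annealed bound $\mathbb P\{T_\omega>n\}\lesssim(\log n)^b n^{-(a-1-\delta/2)}$, Markov plus Borel--Cantelli along dyadic scales would only produce $P\{n_5>n\}$ decaying \emph{polynomially}, not stretched exponentially as required. In the paper the stretched exponential tail comes from taking $q=\lfloor n^c\rfloor$ rather than $\log n$, so that the exceptional probability $\kappa_1^{q/2}$ in the Markov step is genuinely of stretched-exponential type; meanwhile the $Y_1$ piece is handled \emph{fibrewise} via a random threshold $n_4(\omega)$ built from $n_1$ in (P4), not through an annealed estimate. Your proposed annealed route via Lemma~\ref{lem:geetau} for $\{\tau_q>n\}$ also carries an unmanaged factor $C^q$ from that lemma, which with $q\sim\log n$ is a positive power of $n$.
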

\begin{proof}
Let  $c:=\min\{ \frac{\delta}{a+1}, 1/2\}$.  For a.e. $\omega\in\Omega$ we have\footnote{\label{justfood} Notice that we have chosen $q=n^c$ to keep the proof and the estimates of $Y_1$, $Y_2$ and $C_\omega$ as simple as possible. One may try $q=(\log n)^d$ for sufficiently large $d$ so that $Y_2$ decays faster than $Y_1$ and $C_\omega$ remains integrable and to get a quenched decay rate of the form $\frac{(\log n)^{b+d'}}{n^{a-1}}$, for some $d'\ge d$, in Theorem \ref{thm:DC}. However, no matter how we choose $q=g(n)$, with $g(n)\to\infty$ as $n\to\infty$, a quenched correlation decay rate of the form $\frac{(\log n)^b}{n^{a-1}}$, which is analogous to what one expects in the deterministic setting, cannot be achieved since we want to get information on the integrability of the $C_{\omega}$ in Theorem \ref{thm:DC}. The shift of the Lipschitz constant $K_\omega$, and hence the dependence of that constant on $n$, in equation \eqref{eq:corr_tpsi} and the non-uniformity of the tail in (P4) are the main reasons for getting a rate at the order $\frac{1}{n^{a-1+\delta}}$, for any $\delta>0$.}
\begin{equation}\begin{aligned}
\bl\{T_\omega >n\}&\le
 \sum_{ i< \spltp}\bl\{T_\omega>n; \tau_{i-1}^\omega\le n< \tau_i^\omega\}
+ \bl\{T_\omega>n; \tau_{\spltp}^\omega\le n\}\\
 &\leq \sum_{ i< \spltp}\bl\{ \tau_{i-1}^\omega\le n< \tau_i^\omega\}
 + \bl\{T_\omega>n; \tau_{\spltp}^\omega\le n\}\\
 &=: Y_1+Y_2.
\end{aligned}
\end{equation}
We will show that the term $Y_1$ decays at the indicated log-polynomial rate (in $n$) while the term $Y_2$ decays as stretched exponential, which implies the result. First, for the term $Y_1$ we have: 
\begin{equation}\begin{split}\label{eq:Y1expansion1}
\sum_{ i< \spltp}\bl\{\tau_{i-1}^\omega\le  n< \tau_i^\omega\} =
&\sum_{ i< \spltp}\sum_{\Gamma\in \xi_{i-1}^\omega}\bl\{\tau_{i-1}\le n <\tom_i\mid_{}\Gamma\}\bl(\Gamma) \\
&=\sum_{ i< \spltp}\sum_{\substack{\Gamma\in \xi_{i-1}^\omega \\ \tau_{i-1}|\Gamma\le n}}\bl\{\tau_{i-1}\le n <\tom_i\mid_{}\Gamma\}\bl(\Gamma)\\
&\le \sum_{ i< \spltp}\sum_{\substack{\Gamma\in \xi_{i-1}^\omega \\ \tau_{i-1}|\Gamma\le n}}\sum_{j=1}^i \bl\{\tau_j^\omega -  \tau_{j-1}^\omega \geq \frac{n}{i}\mid\Gamma\}\bl(\Gamma).
\end{split}
\end{equation}
For each term in the sum \eqref{eq:Y1expansion1}, using  Lemma 
\ref{lem:tailtau} we obtain,
\begin{equation} \begin{split}\label{eq:Y1expansion2}
\bl\{\tau_j^\omega -  \tau_{j-1}^\omega &\geq \frac{n}{i} \mid\Gamma\}\bl(\Gamma)
= \bl\{\tau_j^\omega -  \tau_{j-1}^\omega \geq (\frac{n}{i} - \ell_0) + \ell_0 | \Gamma\}\bl(\Gamma)\\
& \le MM_0C_{\bl}^{-1}m\{{\hat R}_{\sigma^{\tau_{j-1}^\omega  + \ell_0}\omega} > \frac{n}{i} - \ell_0\} \bl( \Gamma )\\
& \le MM_0C_{\bl}^{-1}\bl( \Gamma )\sum_{k > \frac{n}{i} - \ell_0}
m\{x \in \Lambda | R_{\sigma^{\tau_{j-1}^\omega + \ell_0 - k}\omega} > k \}.
\end{split}
\end{equation}
For each $\omega \in \cap_{n \in \Z}\, \sigma^{-n}( \Omega_1)$, where $\Omega _1$ is the full measure subset from condition (P4), we want to define a random variable $n_4(\omega)$ such that for any $n \ge n_4(\omega)$ we have $n_1(\sigma^{\tau_{j-1}^\omega+ \ell_0-k}\omega) \le \lfloor n^{1-c}\rfloor
$ for any  $k\ge \frac{n}{i} -\ell_0$, $i=1, \dots,  \lfloor n^{1-c}\rfloor$,  so that we can apply the uniform decay rates from (P4).  Below the constraint $\tom_{i-1}|\Gamma \le n$ is  crucial. 
$$n_4(\omega) = 
\inf\{ m | \forall n > m, \forall N \in \{1, 2,  \dots n + \ell_0\}, \forall k > \lfloor{n^{1-c}}\rfloor - \ell_0, n_1(\sigma^{N-k}\omega) < k\}.$$
We claim that $n_4$ has a stretched exponential tail. 
\begin{equation*}\begin{split}
P\{ n_4(\omega) >m\} &\leq \sum_{n>m} \sum_{N=1}^{n+\ell_0} \sum_{k\ge\lfloor n^{1-c}\rfloor- \ell_0}P\{ n_1( \sigma^{N-k} \omega ) >k\}\\ 
&=  \sum_{n>m} \sum_{N=1}^{n+\ell_0} \sum_{k\ge\lfloor n^{1-c}\rfloor- \ell_0}P\{ n_1( \omega ) >k\}\\
&\leq\sum_{n>m} (n+ \ell_0) e^{-u'n^{v'}} \leq e^{-u''m^{v''}},
\end{split}
\end{equation*}
for an appropriate choice of $u''>0$, $0<v''<1$. 
Now, for $n > n_4$ using the fact that $\tau_{j-1}^\omega  \leq n$ and Lemma  \ref{appendix1} we can further upper bound the sum in the 
equation  \eqref{eq:Y1expansion2} by 
\begin{equation}\label{termin75}
\sum_{k\ge \frac{n}{i}-\ell_0}m\{x \in \Lambda | R_{\sigma^{\tau_{j-1}^\omega + \ell_0 - k}\omega} > k \}
\leq  C \frac{[ \log(\frac{n}{i} - \ell_0)]^b}{[\frac{n}{i} - \ell_0]^{a-1}} \le C'i^a \frac{[ \log n]^b}{ n^{a-1}}.
\end{equation}
Now, inserting the  estimate \eqref{termin75} back into equation \eqref{eq:Y1expansion2} and 
substituting that result into  \eqref{eq:Y1expansion1}  we obtain the final estimate on 
$Y_1$:
\begin{equation*} 
Y_1 \leq M^2M_0 C_{\bl}^{-1} K \hat C \sum_{ i< \spltp} i^a \frac{[\log n ]^b}{n^{a-1}}
\leq C' n^{c(a+1)}\frac{[\log n ]^b}{n^{a-1}} \leq C'\frac{[\log n ]^b}{n^{a-1-\delta}}.
\end{equation*}

Now we tackle the term $Y_2$ by decomposing $\Delta \otimes_\omega \Delta$ into two pieces. 
First for parameters $K>0$ and $0<\rho<1$, and integer $q >0$ define
$$B_q(K, \rho) = \{(\omega, x, x') | \#\{ i | 2 \leq i \leq q, \tau_i^\omega - \tau_{i-1}^\omega >K\} 
> \rho q\}.$$
We are going to pick the parameters $K$, and $\rho \sim 1$ later, but the idea is that for points in $B_q(K,\rho)$ the first $q$ return times have many (at least $\rho q$) large (bigger than $K$) gaps.  
Our decomposition will be according to this $B_q(K, \rho)$ for $q = \lfloor n^c \rfloor$:
\begin{equation}\begin{split}
&Y_2 = \bl\{ T_\omega > n, \tau_q < n\} = \bl(\{ T_\omega > n\} \cap B_q(K, \rho))
\\&+ \bl(\{ T_\omega > n \} \cap [B_q(K, \rho)]^c)
\leq \bl( B_q(K, \rho)) + \bl(\{ T_\omega > n \} \cap [B_q(K, \rho)]^c).\\
\end{split}
\end{equation}
In order to estimate the first term in this expression,
fix a sequence of integers
$2 \leq t_1 < t_2 <  \dots t_s$ for $\rho q \leq s \leq q-1$ and define
$$B_q(K, \{ t_i\}) = \{(\omega, x, x') | \tau_{t_i}^\omega - \tau_{t_i -1}^\omega >K, i = 1, 2, \dots s\}.$$
Then $B_q(K,c) = \cup_{s=\rho q}^{q-1} \cup_{t_1 < \dots < t_s} B_q(K, \{t_i\})$ and by Lemma \ref{lem:geetau} we can estimate measures of the terms on the right by
\begin{equation}
\begin{split} 
&\p( B_q(K, \{ t_i\})) = \sum_{\tau_1 < \tau_2 < \dots \tau_q} \p( B_q(K, \{ t_i\}) \cap G_q(\vec{\tau}))\\
&\leq 
\underset{\begin{subarray}{c}
  \tau_1 < \tau_2 < \dots \tau_q \\
\tau_{t_i} - \tau_{t_i -1} >K\\
i = 1, 2, \dots s
  \end{subarray}}
  {\sum}\p(G_q(\vec{\tau}))
\leq C^{q} \sum_{\tau_1} \p\{ \tau_1^\omega(x,x') = \tau_1\} 
\underset{\begin{subarray}{c}
  \tau_2 < \tau_3 < \dots \tau_q \\
\tau_{t_i} - \tau_{t_i -1} >K\\
i = 1, 2, \dots s
  \end{subarray}}
  {\sum} \prod_{j=2}^q  \gamma (\tau_j - \tau_{j-1}) \\
&=C^{q} \sum_{\tau_1}  \p\{ \tau_1^\omega(x,x') = \tau_1\} 
\prod_{i=1}^s \sum_{\tau_{t_i }- \tau_{t_i-1}} \gamma (\tau_{t_i} - \tau_{t_i-1})\prod_{j\neq t_i} \sum_{t_j - t_{j-1}}\gamma (\tau_j - \tau_{j-1})\\
&\leq C^{q} \sum_{m\geq \ell_0} \p\{ \tau_1^\omega(x,x') = m\} 
\left(\sum_{m>K} \gamma (m)\right)^s \left(\sum_{m\geq \ell_0} \gamma (m)\right)^{q-s} .\\
\end{split}
\end{equation}
Now applying Lemmas \ref{tau_tail} and \ref{gamma_tail} we obtain, assuming $\rho > \frac{1}{2}$,
\begin{equation}
\begin{split}
\p (B_q(K,\rho))& = \sum_{s=\rho q}^{q-1} \sum_{t_1 < \dots < t_s} \p(B_q(K, \{t_i\}))
\leq \sum_{s=\rho q}^{q-1} [2\hat C [C(K)]^\rho]^{q}<[2e\hat C [C(K)]^\rho]^{q}.
\end{split}
\end{equation}
We pick $K$ large enough so that 
$$ 2e { \hat{C}} [C(K)]^\rho
:= \kappa_1<1.$$
This shows $\p(B_q(K,\rho)) \leq \kappa_1^q$. Since we want estimates over 
individual fibres $\Dom \times \Dom$ we finally observe the above estimate
shows $m \times m( B_q^\omega(K,\rho)) \leq \kappa_1^{q/2}$ except on 
a set of $\omega$ of measure at most $\kappa_1^{q/2}$.  Once again, an application of Borel-Cantelli shows
there is a full measure set $\Omega_5 \subseteq \Omega$ and $n_5(\omega) \geq n_4(\omega)$ with stretched 
exponential tails (there exists $u>0$ and $0<v<1$ so that 
$ P\{ n_5 > n\} \leq e^{-un^v}$ and such that, for every $\omega \in \Omega_5$ and $n > n_5(\omega) $, 
$m \times m( B_q^\omega(K,\rho)) \leq \kappa_1^{q/2}$. 

We now turn our attention to the complement of $B_q(K, \rho)$.  Note that for each $\omega, \vec{\tau}= (\tau_1, \dots \tau_q)$ either $G_q^\omega(\vec{\tau}) \subseteq  B_q^\omega(K, \rho)$ or 
$G_q^\omega(\vec{\tau}) \cap B_q^\omega(K, \rho) = \emptyset$.  Let us call those 
$\vec{\tau}$ in the former class  $\omega, q- \,\textnormal{good}$. The others we will call $\omega, q-\,\textnormal{bad}$. Therefore, for $\omega$ fixed
\begin{equation} \begin{split}
\bl(\{ T_\omega >n; \tau^\omega_q <n\}&\cap [ B_q^\omega(K, \rho)]^c)\leq 
\bl(\{ T_\omega > \tau^\omega_q\}\cap [ B_q^\omega(K, \rho)]^c)\\
&= \sum_{\vec{\tau}: ~ \omega, q- \,\textnormal{bad}} \bl( \{T_\omega > \tau^\omega_q\} \cap G_q^\omega(\vec{\tau})).
\end{split}
\end{equation}
We move now to estimate the individual terms in the sum over $\omega, q- \,\textnormal{bad}$ terms. Note that each $G_q^\omega(\vec{\tau})$ is $\xi^\omega_q$ measurable. Therefore we can write
$$G_q^\omega(\vec{\tau}) = \cup_{\Gamma_q \in \xi^\omega_q, \, \Gamma_q 
\subseteq G_q^\omega(\vec{\tau})} \Gamma_q$$ as a disjoint union. Recall that $\{T_w > \tau_{q-1}\}$ is measurable with respect to $\xi_q^\omega$. Therefore, for each 
$\Gamma_q$ in the above decomposition, either $\Gamma_q \cap \{T_w > \tau_{q-1}\} = \Gamma_q$
or $\Gamma_q \cap \{T_w > \tau_{q-1}\} = \emptyset$. Call the former 
$\omega, q- \,\textnormal{good}$ and the latter $\omega, q- \,\textnormal{bad}$.  Finally, note that if
$\Gamma_q$ is $\omega, q- \,\textnormal{bad}$ then $\Gamma_q \cap \{T_w > \tau_{q}\} = \emptyset$.
Now we estimate:

\begin{equation*}
\begin{split}
&\bl (\{ T_w > \tau_q\} \cap G_q^\omega(\vec{\tau}))= \sum_{\Gamma_q:~ \omega, q- \,\textnormal{good}} \bl(\{ T_w > \tau_q\} \cap \Gamma_q)\\
&= \sum_{\Gamma_q:~ \omega, q- \,\textnormal{good}} \bl(\{ T_w > \tau_q\} | \{ T_w > \tau_{q-1}\} \cap \Gamma_q)\bl(\{ T_w > \tau_{q-1}\} \cap \Gamma_q)\\
&\leq \sum_{\Gamma_q:~ \omega, q- \,\textnormal{good}} 
(1 - C_{\bl}V_{\sigma^{\tau_{q-1}} \omega }^{\tau_q - \tau_{q-1}})
\bl(\{ T_w > \tau_{q-1}\} | \{ T_w > \tau_{q-2}\} \cap \Gamma_q)\bl(\{ T_w > \tau_{q-2}\} \cap \Gamma_q)\\
&\dots \\
&\leq \sum_{\Gamma_q:~ \omega, q- \,\textnormal{good}} \Pi_{j=2}^q  (1 - C_{\bl}V_{\sigma^{\tau_{j-1} \omega}}^{\tau_j - \tau_{j-1}})
\bl(\{ T_w > \tau_{1}\} \cap \Gamma_q).
\end{split}\end{equation*}
Now, since each good $\Gamma_q$ in the above sum is a subset of $G_q^\omega(\vec{\tau})$
that is $\omega, q- \,\textnormal{bad}$
we know that $\#\{ i | 2 \leq i \leq q, \tau_i - \tau_{i-1} \leq K \} > (1-\rho)q$. Therefore, in the above product, 
considering only those factors in the product, and keeping in mind the lower bound 
given by Lemma \ref{lem:eps_ell} we get 
\begin{align*}
\bl (\{ T_w > \tau_q\} \cap G_q^\omega(\vec{\tau})) &\leq (1 - C_{\bl}V(K))^{(1-\rho)q} 
\sum_{\Gamma_q:~ \omega, q- \,\textnormal{good}}\bl(\{ T_w > \tau_{1}\} \cap \Gamma_q)\\
&\leq (1 - C_{\bl}V(K))^{(1-\rho)q} 
\sum_{\Gamma_q:~ \omega, q- \,\textnormal{good}}\bl(\Gamma_q).
\end{align*}
Finally, summing first over all the good $\Gamma_q$ and  \replaced[id=WB]{then over all $G_q^\omega(\vec{\tau})$ for
$\omega, q- \,\textnormal{bad } \vec\tau$}{then over all the
$\omega, q- \,\textnormal{bad}$ sets $G_q^\omega(\vec{\tau})$} we obtain
\begin{equation*}
\bl (\{ T_w > \tau_q\} \cap [ B_q^\omega(K, \rho)]^c) \leq (1 - C_{\bl}V(K))^{(1-\rho)q}.
\end{equation*}
Set $\kappa = \max\{\kappa_1, (1 - C_{\bl}V(K))^{(1-\rho)}\} <1$ and obtain 
$$Y_2 \leq C'' \kappa^q = C'' \kappa^{\lfloor n^c \rfloor}$$
for all $n > n_5(\omega)$, giving the claimed stretched exponential decay. This completes the proof of the lemma. 
\end{proof}

\subsection{Coupling}
Here we consider $\hF_\omega=(F_\omega\times F_\omega)^{T_\omega}$ which is a mapping from  $\hDom=\Dom\times \Dom$ into $\hat\Delta_{\sigma^{T_\omega}\omega}$.  
Let $\hat\xi_{1}^\omega$ be the partition of $\hDom$ on which $T_{\omega}$ is constant. 
 Let $T_{1, \omega}<T_{2, \omega} \dots$ be stopping times  on $\hDom$  defined as
$$
T_{1, \omega}=T_\omega, \quad T_{n, \omega}=T_{n-1, \omega}+ T_{\sigma^{T_{n-1, \omega}}\omega}\circ \hF^{n-1}_\omega.
$$
For $u, z\in\hDom$ we define a separation time $\hat s(u, z)$  associated with $\hF_\omega$ as the smallest $n\ge 0$ such that $\hF_\omega^n(u)$ and $\hF_\omega^n(z)$ lie in distinct elements of $\hat\xi_1^{\sigma^{T_{n, \omega}}\omega}$ \footnote{Notice that,  for any $u=(x, x'), z=(y, y')\in\Dom$ if $\hat s(u, z)>n$ then $s(x, x')>n$ and $s(y, y')>n$.}. 

Let $\lambda_\omega$ and $\lambda_\omega'$ be two probability measures on $\{\Dom\}$ with densities $\vp, \vp'\in\mF_\gamma^+\cap \mL^{K_\omega}_\infty$. 
Let $\bl=\lambda_\omega\times \lambda_\omega'$ and  $\Phi=d\bl/d(m\times m)$, then $\Phi(x, x')=\vp(x)\vp'(x')$.  The next lemma  establishes the regularity of $\hF_\omega$ and $\Phi$. 

\begin{lemma}\label{the_lost_lemma}

\begin{enumerate}
\item For any $n>0$, $u, z\in\hDom$ with $\hat s(u, z)\ge n$ 
$$
\left|\log\frac{J\hF^n_\omega(u)}{J\hF^n_\omega(z)}\right|\le \hD\gamma^{\hat s(\hF^n_\omega u, \hF^n_\omega z)},
$$
where $\hD\ge2D$ is a constant. 
\item For any $n>0$, $u, z\in\hDom$  
$$
\left|\log\frac{\Phi(u)}{\Phi(z)} \right|\le C_{\Phi}\gamma^{\hat s(u, z)},
$$
where $C_{\Phi}=C_\vp+C_{\vp'}$.  
\end{enumerate}
\end{lemma}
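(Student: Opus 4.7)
The plan is to exploit the product structure of both $\hF_\omega$ and $\Phi$, reducing each estimate to an inequality in a single tower coordinate, where we can invoke either the fibrewise bounded distortion (P2) or the assumed $\mF_\gamma^+$ regularity of the densities $\vp, \vp'$.

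For part (1), I would write $u = (x, x')$, $z = (y, y')$ and use that the iterated stopping times give $\hF^n_\omega(x, x') = (F^{T_{n, \omega}}_\omega x, F^{T_{n, \omega}}_\omega x')$, so the Jacobian factorises and
$$\left|\log \frac{J\hF^n_\omega(u)}{J\hF^n_\omega(z)}\right| \le \left|\log \frac{JF^{T_{n, \omega}}_\omega(x)}{JF^{T_{n, \omega}}_\omega(y)}\right| + \left|\log \frac{JF^{T_{n, \omega}}_\omega(x')}{JF^{T_{n, \omega}}_\omega(y')}\right|.$$
Since both coordinates are at the base at time $T_{n, \omega}$, each single-coordinate map $F^{T_{n,\omega}}_\omega$ decomposes into a composition of $k$ full returns $F^{R_\omega}_\omega$ (after an initial ascent through the tower contributing unit Jacobian). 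I would expand the log quotient telescopically along this chain, apply (P2) at each full return, and use the elementary bound $|\log(1+t)| \le 2|t|$ (valid once the distortion is small, which we may guarantee by absorbing a multiplicative constant into $\hD$). Each resulting term carries a factor $\gamma^{s(\cdot,\cdot)}$ evaluated at the image under the corresponding iterate of $F^{R_\omega}_\omega$, and since each application of $F^{R_\omega}_\omega$ reduces $s$ by one, the telescoping geometric series is bounded by $\frac{2D}{1-\gamma}\,\gamma^{s(F^{T_{n,\omega}}_\omega x,\, F^{T_{n,\omega}}_\omega y)}$. The comparison $\hat s(\hF^n u, \hF^n z) \le s(F^{T_{n,\omega}}_\omega x, F^{T_{n,\omega}}_\omega y)$ (joint separation is at most single-coordinate separation, as noted in the footnote preceding the lemma) together with the analogous estimate for the second coordinate yields the claim with $\hD := \tfrac{4D}{1-\gamma} \ge 2D$.

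For part (2), the argument is considerably lighter. From $\Phi(x,x') = \vp(x)\vp'(x')$ we have
$$\left|\log \frac{\Phi(u)}{\Phi(z)}\right| \le \left|\log \frac{\vp(x)}{\vp(y)}\right| + \left|\log \frac{\vp'(x')}{\vp'(y')}\right|.$$
When $\hat s(u, z) \ge 1$, the single-coordinate comparison gives $s(x,y), s(x',y') \ge 1$, so $x, y$ (resp.\ $x', y'$) lie in a common element of $\mP_\omega$ on which the corresponding density is positive and satisfies the $\mF_\gamma^+$ Lipschitz-log bound. Each term on the right is then at most $C_\vp\, \gamma^{s(x,y)}$ and $C_{\vp'}\, \gamma^{s(x',y')}$ respectively, and applying the same single-coordinate versus joint separation inequality yields the claimed bound with $C_\Phi = C_\vp + C_{\vp'}$.

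The main technical care will be in part (1): keeping track of how many full returns of $F^{R_\omega}_\omega$ are accumulated within $T_{n,\omega}$ in each coordinate separately, and checking that the single-coordinate versus joint separation comparison iterates correctly across the $n$ compositions of $\hF_\omega$ rather than just the final one. The stated constraint $\hD \ge 2D$ originates both from the extra factor $2$ in $|\log(1+t)| \le 2|t|$ and from the additive contribution of two coordinates to the total log-distortion.
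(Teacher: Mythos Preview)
Your proposal is correct and follows essentially the same route as the paper: factorise via the product structure, bound each coordinate separately, and then use $\hat s(\hF^n_\omega u,\hF^n_\omega z)\le \min\{s(F^k_\omega x,F^k_\omega y),\,s(F^k_\omega x',F^k_\omega y')\}$ to pass from single-coordinate to joint separation. The only cosmetic difference is that for part (1) you spell out the telescoping geometric-series argument along successive full returns, whereas the paper simply invokes the single-coordinate log-distortion bound $\bigl|\log\frac{JF^k_\omega(x)}{JF^k_\omega(y)}\bigr|\le D\gamma^{s(F^k_\omega x,F^k_\omega y)}$ directly (that estimate having already been established in equation \eqref{eq:density_reg}); your explicit constant $\hD=\tfrac{4D}{1-\gamma}$ is therefore slightly sharper bookkeeping than the paper's bare assertion $\hD\ge 2D$.
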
 
 \begin{proof}
Let $u=(x, x')$, $z=(y, y')$. For $n>0$ choose $k$ so that $\hF^n_\omega(u)=(F_\omega\times F_\omega)^k(u)$.  \added[id=WB]{Then} 
\begin{align*}
&\left|\log\frac{J\hF^n_\omega(x, x')}{J\hF^n_\omega(y, y')}\right|\le \left|\log\frac{JF^k_\omega(x)}{JF^k_\omega(y)}\right|+
\left|\log\frac{JF^k_\omega(x')}{JF^k_\omega(y')}\right|\\
&\le D\gamma^{s(F^k_\omega x, F^k_\omega y)}+ D\gamma^{s(F^k_\omega x', F^k_\omega y')} \le \hD\gamma^{\hat s(\hF^n_\omega u, \hF^n_\omega z)},
\end{align*}
where we have used $\hat s(u,z)\le\min\{s(x,x'), s(y,y')\}$. Similarly for the second item we have 
$$
\left|\log\frac{\Phi(x, x')}{\Phi(y, y')} \right|  \le 
\left|\log\frac{\vp(x)}{\vp(y)} \right| +\left|\log\frac{\vp'(x')}{\vp'(y')}\right| 
\le C_{\Phi}\gamma^{\hat s(u, z)}.
$$
\end{proof}

Let $\hat\xi_{i}^\omega$ be the partition of $\hDom$ on which $T_{1, \omega}, \dots, T_{i, \omega}$ are constant. For $z\in \hDom$ let $\hat\xi_{i}^\omega(z)$ be the element containing $z$.  Given $\Phi(x, x')=\vp(x)\vp(x')$ let $i_1(\Phi)$ be such that $C_\Phi\gamma^{i_1}< \hD$. For $i<i_1$ let $\hat\Phi_i\equiv \Phi.$ For $i\ge i_1$, let 
\begin{equation}\label{eq:hatPhi}
\hat \Phi_i(z) =\left[\frac{\hat \Phi_{i-1}(z)}{J\hF^{i}_\omega(z)}- \eps \min_{u\in \hat\xi_i^\omega(z)}\frac{\hat \Phi_{i-1}(u)}{J\hF^{i}_\omega(u)}\right]J\hF^i_\omega(z),
\end{equation}
where $\eps$ is a small number that will be defined below.
Since $(\hat \Phi_i - \hat \Phi_{i-1})/{J\hF^i_\omega}$ is constant  on every $\Gamma\in\hat\xi_i^\omega$,  we have 
\begin{equation*}
\pi_\ast(\hF^i_\omega)_\ast((\hat\Phi_{i-1}-\hat\Phi_i)(m\times m)|\Gamma)=\pi_\ast'(\hF^i_\omega)_\ast((\hat\Phi_{i-1}-\hat\Phi_i)(m\times m)|\Gamma).
\end{equation*}
Note that,  $\hat \Phi_i$ is the density of the part of $\bl$ which has not been matched up to time $T_{i, \omega}$.
 \begin{lemma}\label{phii}
For all sufficiently small $\eps>0$ in \eqref{eq:hatPhi}, there exists $0<\eps_1<1$ independent of 
$\Phi$ such that for almost every $\omega$ and for all $i\ge i_1$ 
$$
\hat \Phi_i\le (1-\eps_1)\hat\Phi_{i-1} \quad \text{on} \quad \hDom.
$$ 
\end{lemma}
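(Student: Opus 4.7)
The plan is to first rearrange the recursion \eqref{eq:hatPhi} into the additive form
\[
\hat\Phi_i(z)=\hat\Phi_{i-1}(z)-\eps\,c_\Gamma\,J\hF^i_\omega(z),\qquad c_\Gamma:=\min_{u\in\Gamma}\frac{\hat\Phi_{i-1}(u)}{J\hF^i_\omega(u)},
\]
where $\Gamma\in\hat\xi_i^\omega$ is the element containing $z$. The target inequality $\hat\Phi_i(z)\le(1-\eps_1)\hat\Phi_{i-1}(z)$ is then equivalent to
\[
\inf_{u\in\Gamma}\frac{\hat\Phi_{i-1}(u)}{J\hF^i_\omega(u)}\ \ge\ \frac{\eps_1}{\eps}\cdot\frac{\hat\Phi_{i-1}(z)}{J\hF^i_\omega(z)}\qquad\forall z\in\Gamma,
\]
so it suffices to produce a lower bound, uniform in $i$, $\omega$, and the initial density $\Phi$, on the ratio of the infimum to the supremum of $\hat\Phi_{i-1}/J\hF^i_\omega$ on each $\Gamma$. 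Since Lemma \ref{the_lost_lemma}(1) already controls the distortion of $J\hF^i_\omega$ on $\Gamma$ by $e^{\hD}$, the problem reduces to a distortion estimate for $\hat\Phi_{i-1}$ alone.

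The core of the proof is to establish, by induction on $i\ge i_1$, the estimate
\[
\Bigl|\log\frac{\hat\Phi_{i-1}(u)}{\hat\Phi_{i-1}(z)}\Bigr|\ \le\ C^*\,\gamma^{\hat s(\hF^i_\omega u,\,\hF^i_\omega z)}
\]
for every $u,z$ in a common element of $\hat\xi_i^\omega$, with $C^*$ depending only on $D$, $\hD$, $\gamma$ and a chosen $\eps$. The base case $i=i_1$ follows from Lemma \ref{the_lost_lemma}(2) together with the identity $\hat s(u,z)=\hat s(\hF^{i_1}_\omega u,\hF^{i_1}_\omega z)+i_1$ valid on $\hat\xi_{i_1}^\omega$ elements, and the defining inequality $C_\Phi\gamma^{i_1}<\hD$; one obtains
\[
\Bigl|\log\frac{\Phi(u)}{\Phi(z)}\Bigr|\le C_\Phi\gamma^{i_1}\gamma^{\hat s(\hF^{i_1}u,\hF^{i_1}z)}<\hD\gamma^{\hat s(\hF^{i_1}u,\hF^{i_1}z)},
\]
so $C^*:=\hD$ will serve as the starting value (to be adjusted below).

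For the inductive step, fix $u,z$ in a common $\hat\xi_{i+1}^\omega$ element, hence also in a common $\Gamma\in\hat\xi_i^\omega$, and factor
$\hat\Phi_i(w)=\hat\Phi_{i-1}(w)\bigl(1-\theta(w)\bigr)$ where $\theta(w):=\eps\,c_\Gamma\,J\hF^i_\omega(w)/\hat\Phi_{i-1}(w)\in[0,\eps]$. Taking logarithms,
\[
\log\frac{\hat\Phi_i(u)}{\hat\Phi_i(z)}=\log\frac{\hat\Phi_{i-1}(u)}{\hat\Phi_{i-1}(z)}+\log\frac{1-\theta(u)}{1-\theta(z)}.
\]
The first term is bounded by $C^*\gamma\cdot\gamma^{\hat s(\hF^{i+1}u,\hF^{i+1}z)}$ (induction and $\hat s(\hF^i\cdot,\hF^i\cdot)=\hat s(\hF^{i+1}\cdot,\hF^{i+1}\cdot)+1$). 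For the second, the mean value estimate $|\log(1-\theta(u))/(1-\theta(z))|\le |\theta(u)-\theta(z)|/(1-\eps)$ reduces matters to the oscillation of $\theta$; writing $\theta=\eps c_\Gamma\cdot(J\hF^i_\omega/\hat\Phi_{i-1})$ and applying the inductive bound on $\hat\Phi_{i-1}$ together with Lemma \ref{the_lost_lemma}(1), one obtains $|\theta(u)-\theta(z)|\le K_0\eps(C^*+\hD)\,\gamma\,\gamma^{\hat s(\hF^{i+1}u,\hF^{i+1}z)}$ for a constant $K_0$ depending only on $C^*,\hD,\eps$. Combining yields a coefficient of the form
\[
C^*\gamma+\tfrac{K_0\eps(C^*+\hD)\gamma}{1-\eps}
\]
multiplying $\gamma^{\hat s(\hF^{i+1}u,\hF^{i+1}z)}$. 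Choosing $\eps$ small enough (independently of $\Phi$ and $i$) so that this expression is $\le C^*$ closes the induction; one may then take $\eps_1:=\eps\,e^{-(C^*+\hD)}$.

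The main obstacle is closing the induction with a constant $C^*$ that depends on none of $i$, $\omega$, $\Phi$. The geometric factor $\gamma^{\hat s}$ in the inductive statement is essential: without it, the additive perturbation $\log(1-\theta(u))/(1-\theta(z))$ would accumulate in $i$ and ruin the bound. The threshold $i_1$ is exactly what is needed so that the $\Phi$-dependent constant $C_\Phi$ is absorbed into the universal $\hD$ at the starting step, ensuring $\eps_1$ is independent of $\Phi$ as required.
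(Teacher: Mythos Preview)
Your argument is correct and follows essentially the same route as the paper. Both proofs establish, by induction on $i\ge i_1$, a $\gamma^{\hat s(\hF^i_\omega u,\hF^i_\omega z)}$--log-regularity estimate and then read off $\eps_1=\eps e^{-(C^*+\hD)}$; the only cosmetic difference is that the paper carries the induction on the quotient $\tilde\Psi_i:=\hat\Phi_i/J\hF^i_\omega$ (so that the Jacobian distortion is built in), while you track $\hat\Phi_{i-1}$ directly and invoke Lemma~\ref{the_lost_lemma}(1) separately. One small remark: your constant $K_0$ in fact depends only on $C^*$ and $\hD$ (not on $\eps$), which is what allows you to fix $C^*=\hD$ first, determine $K_0$, and then choose $\eps$ so that $\gamma\bigl(1+\tfrac{2K_0\eps}{1-\eps}\bigr)\le 1$; this closes the induction exactly as in the paper.
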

We will introduce the following densities in order to prove Lemma \ref{phii}. 
For $z\in\hDom$ let 
$$
\tilde\Psi_{i_1-1}(z)=\frac{ \Phi(z)}{J\hF^{i_1-1}_\omega(z)}
$$
and for $i\ge i_1$, let 
$$
\Psi_{i}(z)=\frac{\tilde\Psi_{i-1}(z)}{J\hF_{\sigma^{T_{i-1}, \omega}\omega}(\hF^{i-1}_\omega(z))},  \quad
\eps_{i, z}=\eps\cdot \min_{u\in\hat \xi_i(z)}\Psi_{i}(u), \quad \tilde \Psi_i(z)=\Psi_i(z)-\eps_{i, z}.
$$
Lemma \ref{phii} then follows from  the following lemma:
\begin{lemma}
There exists $\hat C$ such that for all sufficiently small $\eps$ the following holds: for any $z\in\hDom$ with $u\in \hat\xi_i^\omega(z)$ and $i\ge i_1$
$$
\left|
\log \frac{\tilde \Psi_i(u)}{\tilde \Psi_i(z)} \right| \le \hat C\gamma^{\hat s(\hF^i_\omega u, \hF^i_\omega z)}.
$$
\end{lemma}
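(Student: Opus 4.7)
The plan is to induct on $i\ge i_1$, exploiting that $\varepsilon_{i,z}$ is constant on $\hat\xi_i^\omega(z)$, so every bit of spatial variation of $\tilde\Psi_i$ across that element already sits in $\Psi_i$ and is therefore accessible via Lemma~\ref{the_lost_lemma}. Throughout I will write $\beta:=\hat s(\hF^i_\omega u,\hF^i_\omega z)$, and use the identity $\hat s(u,z)=i+\beta$ valid for $u\in\hat\xi_i^\omega(z)$. The base case is $i=i_1$: since $\Psi_{i_1}(z)=\Phi(z)/J\hF^{i_1}_\omega(z)$, the two parts of Lemma~\ref{the_lost_lemma} give
$$\left|\log\frac{\Psi_{i_1}(u)}{\Psi_{i_1}(z)}\right|\le C_\Phi\gamma^{i_1+\beta}+\hD\gamma^\beta\le 2\hD\gamma^\beta,$$
using the defining property $C_\Phi\gamma^{i_1}<\hD$ of $i_1$.

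For the inductive step from $i-1$ to $i$, the chain rule $J\hF_{\sigma^{T_{i-1,\omega}}\omega}(\hF^{i-1}_\omega w)=J\hF^i_\omega(w)/J\hF^{i-1}_\omega(w)$ combined with two applications of Lemma~\ref{the_lost_lemma}(1) and the inductive hypothesis for $\tilde\Psi_{i-1}$ (applied to $u\in\hat\xi_i^\omega(z)\subseteq\hat\xi_{i-1}^\omega(z)$, for which $\hat s(\hF^{i-1}_\omega u,\hF^{i-1}_\omega z)=\beta+1$) yields
$$\left|\log\frac{\Psi_i(u)}{\Psi_i(z)}\right|\le \hat C\gamma^{\beta+1}+\hD\gamma^\beta+\hD\gamma^{\beta+1}=\bigl(\hat C\gamma+\hD(1+\gamma)\bigr)\gamma^\beta.$$
To handle the subtraction $\tilde\Psi_i=\Psi_i-\varepsilon_{i,z}$, set $a:=\varepsilon_{i,z}/\Psi_i(u)$ and $b:=\varepsilon_{i,z}/\Psi_i(z)$, both of which lie in $[0,\varepsilon]$ by definition of $\varepsilon_{i,z}$, and decompose
$$\log\frac{\tilde\Psi_i(u)}{\tilde\Psi_i(z)}=\log\frac{\Psi_i(u)}{\Psi_i(z)}+\log\frac{1-a}{1-b}.$$
A direct estimate gives $|a-b|\le \varepsilon\,\bigl|\Psi_i(z)/\Psi_i(u)-1\bigr|\le\varepsilon\, e^{\hat C\gamma+\hD(1+\gamma)}\bigl(\hat C\gamma+\hD(1+\gamma)\bigr)\gamma^\beta$ via $|e^x-1|\le|x|e^{|x|}$ applied to the already-bounded log ratio of $\Psi_i$, and combining with $|\log(1-a)-\log(1-b)|\le|a-b|/(1-\varepsilon)$ produces
$$\left|\log\frac{\tilde\Psi_i(u)}{\tilde\Psi_i(z)}\right|\le C'(\varepsilon,\hat C)\bigl(\hat C\gamma+\hD(1+\gamma)\bigr)\gamma^\beta,$$
where $C'(\varepsilon,\hat C)\to 1$ as $\varepsilon\to 0$ for any fixed $\hat C$.

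The induction closes provided $C'(\varepsilon,\hat C)\bigl(\hat C\gamma+\hD(1+\gamma)\bigr)\le \hat C$, equivalently $\hat C\ge\hD(1+\gamma)C'/(1-\gamma C')$; this is achievable by first fixing $\hat C>\hD(1+\gamma)/(1-\gamma)$ and then choosing $\varepsilon$ small enough, a choice which simultaneously meets the base-case requirement $\hat C\ge 2\hD\,C'(\varepsilon,\hat C)$. The main obstacle is precisely this subtraction step: one has to show that replacing $\Psi_i$ by $\Psi_i-\varepsilon_{i,z}$ inflates the log-Lipschitz constant only by a factor $C'(\varepsilon,\hat C)$ tending to $1$ with $\varepsilon$. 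It is this inflation factor that forces the smallness of $\varepsilon$ invoked in Lemma~\ref{phii} and ultimately determines the uniform contraction rate $\varepsilon_1$ appearing there.
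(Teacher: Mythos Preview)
Your proof is correct and follows essentially the same inductive strategy as the paper: bound $|\log(\Psi_i(u)/\Psi_i(z))|$ via the previous step plus distortion, then control the subtraction of $\varepsilon_{i,z}$ by a multiplicative inflation factor tending to $1$ as $\varepsilon\to 0$, and close the recursion by choosing $\hat C$ large and $\varepsilon$ small. The only cosmetic differences are that you split the one-step Jacobian via the chain rule and invoke Lemma~\ref{the_lost_lemma}(1) twice (yielding $\hD(1+\gamma)$ where the paper gets $\hD$ from a single application), and that you fix $\hat C$ before $\varepsilon$ from the outset, which sidesteps the paper's separate a posteriori argument that $\varepsilon$ can be taken uniform in $i,u,z,\omega$.
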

\begin{proof}
By definition of $\Psi_i$ and item (1) Lemma \ref{the_lost_lemma} of  we have 
\begin{equation}\label{Psii1}
\begin{aligned}
\left|\log \frac{\Psi_i(u)}{\Psi_i(z)}\right| \le \left|\log \frac{\tilde \Psi_{i-1}(u)}{\tilde \Psi_{i-1}(z)}\right|+\hD\gamma^{\hat s(\hF^i_\omega z, \hF^i_\omega u)}.
\end{aligned}
\end{equation}

 Since $\eps_{i, z}$ is constant on $\hat\xi_i^\omega(z)$ we let $\eps_i=\eps_{i, z}$. We have 
 \begin{equation*}\begin{aligned}
 \left|\log \frac{\tilde\Psi_i(u)}{\tilde\Psi_i(z)} - \log \frac{\Psi_i(u)}{\Psi_i(z)}\right| = \left|\log \frac{\Psi_i(u)-\eps_i}{\Psi_i(u)}\frac{\Psi_i(z)}{\Psi_i(z)-\eps_i}\right| \le \left|\frac{\frac{\eps_i}{\Psi_i(z)}-\frac{\eps_i}{\Psi_i(u)}}{1-\frac{\eps_i}{\Psi_i(z)}}\right| \\
\le \frac{\eps_i}{\Psi_i(z)}\left|\frac{\Psi_i(u)}{\Psi_i(z)}-1\right|\frac{1}{1-\eps}\le \frac{\eps}{1-\eps}C\left|\log \frac{\Psi_i(u)}{\Psi_i(z)}\right|.
 \end{aligned}\end{equation*}
Notice that $C$ in the latter inequality increases as $\frac{\Psi_i(u)}{\Psi_i(z)}$ increases. Allowing $\eps$  to depend on $i$, $z,$ $u$ and $\omega$,\footnote{Notice that when $i=i_1$ we can choose $\eps$ uniform for all $u$ and $z$, allowing dependence only on $\hD$ and $C_\Phi$.} for a given $0< \eps'<\gamma^{-1}-1$ we can choose $\eps$ small enough so that  
\begin{equation}\label{choice_of_eps}
\frac{\eps}{1-\eps}C< \eps'
\end{equation}
we obtain 
\begin{equation}\label{Psii2}
\left|\log \frac{\tilde\Psi_i(u)}{\tilde\Psi_i(z)}\right| \le (1+\eps') 
\left|\log \frac{\Psi_i(u)}{\Psi_i(z)}\right|.
\end{equation}
By \eqref{Psii1} and \eqref{Psii2} we  obtain 
\begin{equation}\label{recursive}
\left|\log \frac{\tilde\Psi_i(u)}{\tilde\Psi_i(z)}\right| \le (1+\eps')\left(\left|\log \frac{\tilde\Psi_{i-1}(u)}{\tilde\Psi_{i-1}(z)}\right|+ \hD\gamma^{\hat s(\hF^i_\omega u, \hF^i_\omega z)}\right).
\end{equation}
Moreover, for $i=i_1$ we have 
\begin{align*}
\left|\log \frac{\tilde\Psi_{i_1}(u)}{\tilde\Psi_{i_1}(z)}\right| 
&\le (1+\eps')\left(\left|\log \frac{\Phi(u)}{\Phi(z)}\right|+ \hD\gamma^{\hat s(\hF^i_\omega u, \hF^i_\omega z)}\right)
\\&\le (1+\eps')(C_\Phi\gamma^{\hat s(u, z)}+\hD\gamma^{\hat s(\hF^{i_1}_\omega u, \hF^{i_1}_\omega z)})
\\&\le (1+\eps')2\hD\gamma^{\hat s(\hF^{i_1}_\omega u, \hF^{i_1}_\omega z)}.
\end{align*}
Note that in the last inequality we have used $C_\Phi\gamma^{\hat s(u, z)}\le \hD\gamma^{\hat s(\hF^{i_1}_\omega u, \hF^{i_1}_\omega z)}$. 
Finally,  using the relation $\hat s(\hF^{i-j}_\omega u, \hF^{i-j}_\omega z)=\hat s(\hF^{i}_\omega u, \hF^{i}_\omega z)+j$ we have 
\begin{equation}\label{4}
\left|\log\frac{\tilde \Psi_i(u)}{\tilde \Psi_i(z)}\right|\le \hat C \gamma^{s(\hF^{i}_\omega u, \hF^{i}_\omega z))},
\end{equation}
where $\hat C=2(1+\eps')\hD\sum_{j=0}^\infty[(1+\eps')\gamma]^j$.

Now, we show by an inductive argument that $\eps$ in \eqref{choice_of_eps} can be chosen independent of $i$, $u$, $z$, $\omega$. 
First of all notice that we can choose $\eps$ independent of $u,$ $z$ for $i=i_1$ because 
$$
\frac{\Psi_{i_1}(u)}{\Psi_{i_1}(z)}=\frac{\Phi(u)}{\Phi(z)}\frac{J\hF^{i_1}_\omega(z)}{J\hF^{i_1}_\omega(u)} \le (1+\hD)^2.
$$
Let  $j>i_1$ and suppose that $\eps$ is small enough so that  \eqref{4} holds for all $i<j$ and $u\in\hat \xi_i^\omega(z)$. 
Then  by \eqref{Psii1} we have 
$$
\left|\log \frac{\Psi_{i}(u)}{\Psi_{i}(z)}\right| \le \hat C+\hD,
$$
which implies that $\frac{\Psi_{i}(u)}{\Psi_{i}(z)}\in [e^{-(\hat C+\hD)}, e^{\hat C+\hD}]$. Therefore $C$ in \eqref{choice_of_eps}
 is bounded by $e^{\hat C+\hD}$. Hence by choosing $\eps< \eps'e^{-(\hat C+\hD)}$ we conclude that the estimate in \eqref{recursive} holds for $i=j$.  
 \end{proof}
\begin{lemma}\label{lem:main}
Let $0<\eps_1<1$ be as in Lemma \ref{phii}. For almost every $\omega$ and all $n\in \mathbb N$ 
$$
|(F_\omega^n)_*(\lambda)-(F_\omega^n)_*(\lambda')|\le 2\bl\{T_{i_1, \omega}>n\}+2\sum_{i=i_1}^\infty(1-\eps_1)^{i-i_1+1}\bl\{T_{i, \omega}\le n <T_{i+1, \omega}\}, 
$$
where $\bl=\lambda_\omega\times \lambda_\omega'$.
\end{lemma}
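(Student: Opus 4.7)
The plan is to decompose $\hDom$ according to the coupling stopping times $T_{i,\omega}$ and exploit the geometric decay of unmatched mass provided by Lemma \ref{phii}. The key point is that each matching step identifies a portion of mass which is symmetric in the two coordinates, so that portion cancels in the total variation $|(F^n_\omega)_\ast\lambda - (F^n_\omega)_\ast\lambda'|$ after pushing forward.

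First I would verify the cancellation property. For each $i \ge i_1$ and each $\Gamma \in \hat\xi_i^\omega$, the density $(\hat\Phi_{i-1} - \hat\Phi_i)/J\hF^i_\omega$ is by construction (see \eqref{eq:hatPhi}) constant on $\Gamma$, and $\hF^i_\omega$ maps $\Gamma$ bijectively onto a product base $\Delta_{\sigma^{T_{i,\omega}}\omega,0}\times \Delta_{\sigma^{T_{i,\omega}}\omega,0}$. Hence $(\hF^i_\omega)_\ast\bigl[(\hat\Phi_{i-1} - \hat\Phi_i)(m\times m)|_\Gamma\bigr]$ is a constant multiple of $m\times m$ on this product, so it has equal coordinate marginals. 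Subsequent iteration by the diagonal map $F_\omega\times F_\omega$ preserves this symmetry under coordinate swap, so the matched mass contributes exactly $0$ to $\pi_\ast(F_\omega^n\times F_\omega^n)_\ast\bl - \pi'_\ast(F_\omega^n\times F_\omega^n)_\ast\bl$, i.e.\ to $(F_\omega^n)_\ast\lambda - (F_\omega^n)_\ast\lambda'$.

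Second, I would partition
$$\hDom = A_0 \sqcup \bigsqcup_{i \ge i_1} A_i, \qquad A_0 := \{T_{i_1,\omega} > n\}, \quad A_i := \{T_{i,\omega}\le n < T_{i+1,\omega}\}.$$
On $A_0$ no matching step has occurred before time $n$, so the unmatched density is simply $\Phi$; the elementary bound $|\pi_\ast\mu - \pi'_\ast\mu|\le 2\mu(\hDom)$ applied to $\bl|_{A_0}$ yields the contribution $2\bl\{T_{i_1,\omega}>n\}$. On $A_i$ for $i\ge i_1$, exactly $i - i_1 + 1$ matching steps have taken place before time $n$, so iterating Lemma \ref{phii} gives $\hat\Phi_i \le (1-\eps_1)^{i-i_1+1}\Phi$; applying the same elementary bound to the remaining unmatched portion of $\bl|_{A_i}$ produces the summand $2(1-\eps_1)^{i-i_1+1}\bl\{T_{i,\omega}\le n < T_{i+1,\omega}\}$. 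Summing over the decomposition yields the stated inequality.

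The delicate point I expect to be the main obstacle is making the cancellation precise: one must check that each $A_i$ is a union of elements of $\hat\xi_i^\omega$, so that the matched densities $\hat\Phi_{i-1}-\hat\Phi_i$ are well defined piece by piece on $A_i$, and that the symmetry between the two coordinates established on the product base at time $T_{i,\omega}$ is preserved by the remaining $n - T_{i,\omega}$ diagonal iterations of $F_\omega\times F_\omega$. Once these measurability and symmetry checks are in hand, the displayed inequality follows by summing the contributions just described.
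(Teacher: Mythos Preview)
Your proposal is correct and follows essentially the same route as the paper: establish that each matched increment $(\hat\Phi_{i-1}-\hat\Phi_i)$ has equal coordinate marginals after pushing forward, then bound the unmatched remainder via Lemma~\ref{phii} on the partition $\{T_{i,\omega}\le n<T_{i+1,\omega}\}$. The paper organises the cancellation slightly differently, introducing a \emph{time}-indexed density $\Phi_k$ (equal to $\hat\Phi_i$ on $\{T_{i,\omega}\le k<T_{i+1,\omega}\}$) and telescoping $\Phi=\Phi_n+\sum_{k=1}^n(\Phi_{k-1}-\Phi_k)$; this has the advantage that each $\Phi_{k-1}-\Phi_k$ is supported on the $\hat\xi_j^\omega$-measurable set $\{T_{j,\omega}=k\}$, so the symmetry argument applies to it as a whole without needing to intersect with $A_i$. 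In particular, your stated ``delicate point'' that $A_i$ be a union of $\hat\xi_i^\omega$-atoms is in fact false (since $A_i$ depends on $T_{i+1,\omega}$) and is not needed: the cancellation should be carried out globally first, after which $\int\Phi_n\,d(m\times m)=\sum_i\int_{A_i}\hat\Phi_i\,d(m\times m)$ is bounded exactly as you describe.
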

\begin{proof}
In Lemma \ref{phii} the estimates for the mass of $\bl$  after the $i^{th}$ iterate matching  was given. Now we will relate that estimate to the iterates of $F_\omega$.  Define  $\Phi_0$, $\Phi_1, \dots$  as follows: for $z\in \Dom\times\Dom$ let 
$$
\Phi_n(z)=\hat\Phi_i(z) \quad \text{when} \quad T_{i, \omega}(z)\le n< T_{i+1, \omega}(z),
$$
where $\hat\Phi_i(z)$ is as in \eqref{eq:hatPhi}.
We first prove that $|(F_\omega^n)_*(\lambda)-(F_\omega^n)_*(\lambda')| \le 2 \int \Phi_nd(m\times m)$. 
Below we use  the notation $\Phi (m\times m)$  to denote a measure whose density with respect to $m\times m$ is $\Phi$. 
First of all recall that $\Phi_0=\Phi$ and  write $\Phi=\Phi_n+\sum_{k=1}^n(\Phi_{k-1}-\Phi_k)$. We have 
\begin{equation}\label{eq:matching}
\begin{aligned}
|(F_\omega^n)_*(\lambda) &-(F_\omega^n)_*(\lambda')| \\
&= |\pi_*(F_\omega\times F_\omega)^n_*(\Phi(m\times m))-\pi_*'(F_\omega\times F_\omega)^n_*(\Phi(m\times m))| 
\\& \le|\pi_*(F_\omega\times F_\omega)^n_*(\Phi_n(m\times m))-\pi_*'(F_\omega\times F_\omega)^n_*(\Phi_n(m\times m))| 
\\&+\sum_{k=1}^n|(\pi_*-\pi_*')(F_\omega\times F_\omega)^n_*((\Phi_{k-1}-\Phi_{k})(m\times m))|.
\end{aligned}\end{equation}
Since, for any $A\subset \Delta_{\sigma^n\omega}$  we have  
$$
\pi_*(F_\omega\times F_\omega)^n_*(\Phi_n(m\times m))(A)= \int_{F^{-n}_\omega(A)\times \Dom}\Phi_nd(m\times m),
$$ 
the first term in the final sum in \eqref{eq:matching} is bounded by $2\int{\Phi_n}d(m\times m)$. Now, we claim that  all other terms in \eqref{eq:matching} vanish.  Let $A_k=\cup A_{k, i}\subset \hDom$ be such that $A_{k, i}=\{z\in\hDom\mid k=T_{i, \omega}(z)\}$. By construction $A_{i, k}$ is a union of elements of $\hat\xi_i^{\omega}$ and $A_{k, i}\cap A_{k, j}=\varnothing$ for $i\neq j$ (because $T_{i, \omega} <T_{j, \omega}$ for $i<j$ ). For $\Gamma\in \hat\xi_i^{\omega}| A_{k, i}$  by definition of the $\Phi_i$'s we have $\Phi_{k-1}-\Phi_k=\hat \Phi_{i-1}-\hat\Phi_i$. On the other hand, $\Phi_{k-1}\equiv \Phi_k$ on $\hDom\setminus A_k$. Hence for each $k$ and for every $\Gamma\subset A_{k, i}$ we have 
$$
(F_{\sigma^k\omega}^{n-k})_\ast\pi_*(F_\omega\times F_\omega)^{T_{i, \omega}}_*(\Phi_k(m\times m)| \Gamma)=(F_{\sigma^k\omega}^{n-k})_\ast\pi_*'(F_\omega\times F_\omega)^{T_{i, \omega}}_*(\Phi_k(m\times m)| \Gamma)
$$
which finishes the proof of claim. 

It remains to estimate $\int{\Phi_n}d(m\times m)$. We have  
$$
\int{\Phi_n}d(m\times m) = \int_{\{T_{i_1, \omega}>n\}}{\Phi_n}d(m\times m)+\sum_{i=i_1}^{\infty}\int_{\{T_{i, \omega}\le n <T_{i+1, \omega}\}}{\Phi_n}d(m\times m).
$$
Note that $\Phi_n=\Phi$ on $\{T_{i_1, \omega}>n\}$. Hence we have 
$$
\int_{\{T_{i_1, \omega}>n\}}{\Phi_n}d(m\times m)=\int_{\{T_{i_1, \omega}>n\}}{\Phi}d(m\times m)=\bl\{T_{i_1, \omega}>n\}.
$$
Let  $n$ be such that $T_{i, \omega}\le n <T_{i+1, \omega}$. By Lemma \ref{phii} we have $\Phi_n=\hat\Phi_i\le(1-\eps_1)^{i-i_1+1}\Phi$. Hence  
\begin{align*}
\int_{\{T_{i, \omega}\le n <T_{i+1, \omega}\}}{\Phi_n}d(m\times m)\le 
 \int_{\{T_{i, \omega}\le n <T_{i+1, \omega}\}}{(1-\eps_1)^{i-i_1+1}\Phi}d(m\times m)
 \\=(1-\eps_1)^{i-i_1+1}\bl{\{T_{i, \omega}\le n <T_{i+1, \omega}\}}.
\end{align*}
\end{proof}
\section{Decay of correlation} \label{sec:decay2}
The main result of this section is the following  proposition.
\begin{proposition}\label{rates}
For every  $\delta>0$ there is a full measure set $\Omega_6\subset \Omega$ and a random variable  $n_6(\omega)$ \replaced[id=WB]{}{as in Proposition  \ref{prop:tail}} such that for all probability measures $ \lambda_\omega , \lambda_\omega'$  on $\{\Delta_\omega\}$  with $\frac{d\lambda}{dm}, \frac{d\lambda'}{dm}\in \mathcal F_\gamma^{K_\omega}\cap \mF_\gamma^+$, there is $C_{\lambda, \lambda'}$ so that  for any $n > n_6$,  we have 
$$|(F_\omega^n)_*(\lambda)-(F_\omega^n)_*(\lambda')|\ \leq C_{\lambda, \lambda'} \frac{(\log n)^{b\added[id=WB]{+a}}}{n^{a-1-\delta}}.$$ 
\added[id=WB]{Moreover, there exist $C'>0$, such that 
$$P\{ n_6 > n \} \leq C' e^{-u'n^{v'}},$$
for some $u'>0$ and $0<v'<1$.}
\end{proposition}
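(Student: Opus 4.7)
The plan is to feed the stopping-time tail from Proposition \ref{prop:tail} into the coupling inequality of Lemma \ref{lem:main}. That lemma gives
\begin{equation*}
|(F_\omega^n)_*\lambda-(F_\omega^n)_*\lambda'|\le 2\bl\{T_{i_1,\omega}>n\}+2\sum_{i=i_1}^\infty(1-\eps_1)^{i-i_1+1}\bl\{T_{i,\omega}\le n<T_{i+1,\omega}\},
\end{equation*}
so the task reduces to bounding each piece by $C_{\lambda,\lambda'}(\log n)^{b+a}/n^{a-1-\delta}$.

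The first step is to establish a uniform polynomial tail for each successive increment $T_{j,\omega}-T_{j-1,\omega}=T_{\sigma^{T_{j-1,\omega}}\omega}\circ\hF^{j-1}_\omega$. Item (2) of Lemma \ref{the_lost_lemma} together with Lemma \ref{phii} and its preceding auxiliary lemma ensure that the normalised residual coupling densities at stage $j-1$ lie in $\mF^+_\gamma$ with regularity constants controlled independently of $j$, and that their projections onto the two fibres still sit in $\mL^{K_{\sigma^{T_{j-1,\omega}}\omega}}_\infty$. Applying Proposition \ref{prop:tail} at the shifted base $\sigma^{T_{j-1,\omega}}\omega$ with a small auxiliary parameter $\delta':=\delta/(a+2)$ therefore yields a constant $\hat C$ independent of $j$ with
\begin{equation*}
\bl\{T_{j,\omega}-T_{j-1,\omega}>m\}\le \hat C\,\frac{(\log m)^{b}}{m^{a-1-\delta'}}
\end{equation*}
whenever $m>n_5(\sigma^{T_{j-1,\omega}}\omega)$.

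For the head term, $i_1=i_1(\Phi)$ depends only on $(\lambda,\lambda')$, so a union bound over the $i_1$ increments delivers $\bl\{T_{i_1,\omega}>n\}\le C_{\lambda,\lambda'}(\log n)^b/n^{a-1-\delta'}$. For the geometric sum I would split at $i^*(n):=\lfloor K\log n\rfloor$, with $K$ chosen so large that $(1-\eps_1)^{K\log n}\le n^{-(a-1)}$. The tail $i>i^*(n)$ contributes $O(n^{-(a-1)})$ from the geometric factor alone. For $i_1\le i\le i^*(n)$ I would bound $\bl\{T_{i,\omega}\le n<T_{i+1,\omega}\}\le\bl\{T_{i+1,\omega}>n\}$ and then a union bound over the $i+1$ increments gives
\begin{equation*}
\bl\{T_{i+1,\omega}>n\}\le (i+1)^{a-\delta'}\,\hat C\,\frac{(\log n)^{b}}{n^{a-1-\delta'}}.
\end{equation*}
Using the uniform cutoff $(i+1)^{a-\delta'}\le(i^*(n)+1)^{a-\delta'}=O((\log n)^{a-\delta'})$ inside the geometric sum, and then summing the geometric weights, produces a bound of $C(\log n)^{a+b-\delta'}/n^{a-1-\delta'}$. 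Choosing $\delta'$ small enough to be absorbed into $\delta$ recovers the stated exponent $(\log n)^{b+a}/n^{a-1-\delta}$.

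Finally, $\Omega_6$ and $n_6(\omega)$ are built so that $n_6(\omega)\ge n_5(\sigma^{T_{j-1,\omega}}\omega)$ for every $j$ with $T_{j-1,\omega}$ below the relevant horizon, using exactly the template that defined $n_4$ in the proof of Proposition \ref{prop:tail}; shift invariance of $P$ combined with the stretched-exponential tail of $n_5$ and a Borel--Cantelli argument deliver the claim $P\{n_6>n\}\le C'e^{-u'n^{v'}}$. The main obstacle is confirming that Proposition \ref{prop:tail} applies uniformly across the successive increments, which rests on the uniform-in-$j$ distortion and regularity control for the residual coupling densities established by the $\hat\Phi_i$ construction in Section \ref{sec:decay1}; once that uniformity is secured, the remainder is routine bookkeeping with geometric and polynomial sums.
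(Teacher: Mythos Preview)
Your overall architecture matches the paper's proof closely: start from Lemma~\ref{lem:main}, split the geometric sum at a logarithmic index, control the tail by the geometric factor alone, pigeonhole the remaining terms onto a large increment $T_{j,\omega}-T_{j-1,\omega}>n/i$, and construct $n_6$ from shifted copies of $n_5$ exactly as $n_4$ was built from $n_1$ in Proposition~\ref{prop:tail}. The final bookkeeping with $(\log n)^{a}$ coming from $i\le O(\log n)$ is also the same.

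There is, however, a genuine gap in your treatment of the increments. You propose to bound $\bl\{T_{j,\omega}-T_{j-1,\omega}>m\}$ by applying Proposition~\ref{prop:tail} ``at the shifted base $\sigma^{T_{j-1,\omega}}\omega$'' to the normalised residual coupling densities (or their projections). But Proposition~\ref{prop:tail} is stated and proved only for \emph{product} measures $\bl=\lambda_\omega\times\lambda'_\omega$; its proof goes through Lemmas~\ref{lem:return_est1} and~\ref{lem:tailtau}, both of which decompose $\bl$ factor by factor. Conditioned on $\Gamma\in\hat\xi^\omega_{j-1}$, the relevant measure is $(\hF^{j-1}_\omega)_*(\bl|_\Gamma)$, which is \emph{not} a product on $\hat\Delta_{\sigma^{T_{j-1,\omega}}\omega}$, and taking marginals and re-forming their product gives a different measure whose tail says nothing about the original conditional. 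The regularity of the $\hat\Phi_i$ that you invoke does not repair this: it controls the joint density, not a product structure.

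The paper sidesteps this entirely via Lemma~\ref{sublemma}, which you do not use. That lemma shows directly that the conditional density of $(\hF^{i}_\omega)_*(\bl|_\Gamma)$ with respect to $m\times m$ is bounded by a constant $C_{\bl}$ depending only on $\hD$ and $C_\Phi$; hence
\[
\bl\{T_{j+1,\omega}-T_{j,\omega}>m\mid\Gamma\}\le C_{\bl}\,(m\times m)\{T_{\sigma^{T_{j,\omega}}\omega}>m\}.
\]
Now Proposition~\ref{prop:tail} is applied once and for all to the \emph{fixed} product measure $\bm=(m\times m)/m(\Dom)^2$, whose marginals are trivially in $\mF_\gamma^+\cap\mL^{K_\omega}_\infty$, and the only $\omega$-dependence left is through the shift in $n_5$, handled by your $n_6$ construction. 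Inserting Lemma~\ref{sublemma} at the point where you invoke Proposition~\ref{prop:tail} on the residuals makes your argument correct and essentially identical to the paper's.
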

 Before proving the proposition, we prove the following auxiliary lemma.
\begin{lemma}\label{sublemma}
There exists $C_{\bl}$ such that for all $i\ge 1$ and for any $\Gamma\in \hat\xi_i^\omega$
$$
\bl\{T_{i+1, \omega}-T_{i, \omega}>n |\Gamma\}\le C_{\bl}(m\times m)\{T_{\sigma^{T_{i, \omega}}\omega}>n\}.
$$
\end{lemma}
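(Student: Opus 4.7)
The plan is to change variables via $\hat F^i_\omega$, pushing the conditional measure $\bl(\cdot\mid\Gamma)$ forward to the base of $\hat\Delta_{\sigma^{T_{i,\omega}}\omega}$, and then control the resulting density using the two distortion statements provided by Lemma~\ref{the_lost_lemma}.

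First, since $T_{i,\omega}$ is constant on $\Gamma\in\hat\xi_i^\omega$, we have the identity $T_{i+1,\omega}-T_{i,\omega}=T_{\sigma^{T_{i,\omega}}\omega}\circ \hat F^i_\omega$, so
\begin{equation*}
\bl\{T_{i+1,\omega}-T_{i,\omega}>n\mid \Gamma\}
=\frac{1}{\bl(\Gamma)}\int_{\Gamma\cap (\hat F^i_\omega)^{-1}\{T_{\sigma^{T_{i,\omega}}\omega}>n\}}\Phi\,d(m\times m).
\end{equation*}
By construction of the partitions $\hat\xi^\omega_i$ and the stopping times $T_{j,\omega}$, the map $\hat F^i_\omega$ restricts to a bijection from $\Gamma$ onto the full base $\Delta_{\sigma^{T_{i,\omega}}\omega,0}\times\Delta_{\sigma^{T_{i,\omega}}\omega,0}$. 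I would therefore change variables and write
\begin{equation*}
\int_{\Gamma\cap (\hat F^i_\omega)^{-1} B}\Phi\,d(m\times m)
=\int_{B}\frac{\Phi((\hat F^i_\omega|_\Gamma)^{-1}z)}{J\hat F^i_\omega((\hat F^i_\omega|_\Gamma)^{-1}z)}\,d(m\times m)(z)
\end{equation*}
for any measurable $B\subset \Delta_{\sigma^{T_{i,\omega}}\omega,0}^2$.

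Next, I will show that the integrand $g(z):=\Phi((\hat F^i_\omega|_\Gamma)^{-1}z)/J\hat F^i_\omega((\hat F^i_\omega|_\Gamma)^{-1}z)$ is essentially constant in $z$. For any $z,z'$ in the base square the preimages have $\hat s(\hat F^i_\omega u,\hat F^i_\omega u')=0$ (they lie in the same base element), so Lemma~\ref{the_lost_lemma}(1) gives $|\log J\hat F^i_\omega(u)/J\hat F^i_\omega(u')|\le \hat D$, and since $\hat s(u,u')\ge i$ we have $|\log\Phi(u)/\Phi(u')|\le C_\Phi$ from Lemma~\ref{the_lost_lemma}(2). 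Consequently $g(z)\le e^{\hat D+C_\Phi}g(z')$ uniformly in $z,z'$, $i$, and $\omega$. Integrating this over the base, which has total $(m\times m)$-measure $m(\Lambda)^2=1$, yields
\begin{equation*}
\sup_{z}g(z)\;\le\;e^{\hat D+C_\Phi}\int_{\Delta_{\sigma^{T_{i,\omega}}\omega,0}^2}g(z)\,d(m\times m)(z)
\;=\;e^{\hat D+C_\Phi}\,\bl(\Gamma).
\end{equation*}
Setting $B=\{T_{\sigma^{T_{i,\omega}}\omega}>n\}\cap \Delta_{\sigma^{T_{i,\omega}}\omega,0}^2$, combining the two displays and dividing by $\bl(\Gamma)$ gives
\begin{equation*}
\bl\{T_{i+1,\omega}-T_{i,\omega}>n\mid\Gamma\}\le e^{\hat D+C_\Phi}\,(m\times m)\{T_{\sigma^{T_{i,\omega}}\omega}>n\},
\end{equation*}
so the lemma holds with $C_{\bl}:=e^{\hat D+C_\Phi}$.

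The only point that needs care is that $C_{\bl}$ must be independent of $i$ and $\omega$; this is automatic because $\hat D$ comes from Lemma~\ref{the_lost_lemma}(1) (a uniform distortion estimate) and $C_\Phi=C_\vp+C_{\vp'}$ depends only on the regularity of the two starting densities. I do not expect any real technical obstacle here beyond correctly identifying $\hat F^i_\omega(\Gamma)$ with the full product base; this uses the construction of $\xi^\omega_i$ and $T_{i,\omega}$ which forces a simultaneous full return of both coordinates at time $T_{i,\omega}$.
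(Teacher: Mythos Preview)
Your proof is correct and follows essentially the same route as the paper: push forward $\bl|_\Gamma$ by $\hat F^i_\omega$ to the product base, use the two parts of Lemma~\ref{the_lost_lemma} to show the resulting density has log-oscillation at most $\hat D+C_\Phi$, and normalise over the unit-mass base to conclude with $C_{\bl}=e^{\hat D+C_\Phi}$. One cosmetic point: your claim that $\hat s(\hat F^i_\omega u,\hat F^i_\omega u')=0$ is not generally true (two images in the base square may still share the same $\hat\xi_1$-atom), but it is unnecessary---Lemma~\ref{the_lost_lemma}(1) already gives the bound $\hat D\gamma^{\hat s}\le \hat D$ from $\hat s\ge 0$.
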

\begin{proof} 
By definition we have 
$$\bl\{T_{i+1, \omega}-T_{i, \omega}>n |\Gamma\} =\bl\{T_{\sigma^{T_{i, \omega}}\omega}\circ \hF^{i}_\omega> n\}.$$
Therefore, it remains to bound the density $d(\hF_\omega^i)_*\bl/d(m\times m)$.
Let $\Gamma\in \hat\xi_i^\omega$. Any $z', u'\in \hat\Delta_{0, \sigma^{T_{i, \omega}}\omega}$   have unique pre-images $u, z\in \Gamma$. By definition we have 
\begin{align*}
\left|\log \left(\frac{d(\hF^i_\omega)_*\bl}{d(m\times m)}(u') \middle/ \frac{d(\hF^i_\omega)_*\bl}{d(m\times m)}(z')\right)\right|=
 \left| \log\frac{J\hF_\omega^i(z)}{J\hF_\omega^i(u)} +\log \frac{\Phi(u)}{\Phi(z)}\right| \\
\le\hD\gamma^{\hat s(\hF^i_\omega u, \hF^i_\omega z)} + C_\Phi \gamma^{\hat s(u, z)} \le \hD+C_{\Phi}=:\log C_{\bl}.
\end{align*}
Since $\hD$ is independent of $\Gamma$ this implies $d(\hF_\omega^i)_*\bl/d(m\times m)< C_{\bl}$.
\end{proof}
\begin{proof}[Proof of Proposition \ref{rates}] 
Note that, by taking $T_{0, \omega}\equiv 0$    Lemma \ref{lem:main} implies 
\begin{equation}
|(F_\omega^n)_*(\lambda)-(F_\omega^n)_*(\lambda')|\le 2(1-\eps_1)^{1-i_1}\sum_{i=0}^\infty(1-\eps_1)^{i}\bl\{T_{i, \omega}\le n <T_{i+1, \omega}\}.
\end{equation}
By choosing  $A(n)\in \mathbb N$ so that $(1-\eps_1)^{A(n)}\le n^{-2a}$,  for any $i\ge A(n)$ we have 
\begin{equation}\label{eq:igeAn}
\sum_{i=A(n)}^\infty(1-\eps_1)^{i}\bl\{T_{i, \omega}\le n <T_{i+1, \omega}\}\le \sum_{i=A(n)}^\infty(1-\eps_1)^{i}\le \frac{1}{\eps_1n^{2a}}.
\end{equation}
Now we estimate $\bl\{T_{i-1, \omega}\le n <T_{i, \omega}\}$ for  $i\le A(n)$. Let $\bm =\frac{m\times m}{m(\Dom)^2}$.   We proceed as in equation \eqref{eq:Y1expansion1} in the estimate of $Y_1$.    For every $i$ we write 
\begin{equation}\label{eq:ileAn}
\begin{aligned}
&\bl\{T_{i-1, \omega}\le n <T_{i, \omega}\}\le \sum_{\substack{\Gamma\in \hat\xi^\omega_{i-1}\\ T_{i-1, \omega}\mid \Gamma\le n}}\sum_{j=0}^{i-1}\bl\{T_{j+1, \omega}- T_{j, \omega} >\frac{n}{i}|\mid\Gamma\}\bl(\Gamma) \\
&\quad(\text{by Lemma \ref{sublemma}})\quad\le C_{\bl}  \sum_{\substack{\Gamma\in \hat\xi^\omega_{i-1}\\ T_{i-1, \omega}\mid \Gamma\le n}}\bl(\Gamma)\sum_{j=0}^{i-1}(m\times m)\{T_{\sigma^{T_{j, \omega}}\omega} >\frac{n}{i}\}\\
&\le C_{\bl}M^2\sum_{\substack{\Gamma\in \hat\xi^\omega_{i-1}\\ T_{i-1, \omega}\mid \Gamma\le n}}\bl(\Gamma)\sum_{j=0}^{i-1}\bm\{T_{\sigma^{T_{j, \omega}}\omega} >\frac{n}{i}\}. 
\end{aligned}
\end{equation}
\added[id=WB]{Recall that there is a full measure set $\Omega _5$ and a random variable $n_5$ which is finite on $\Omega_5$, and $P\{n_5>n\}\le Ce^{-un^v}$. Now define
$$n_6(\omega) = \inf\{ n | \forall k\geq n, \forall N \in [1, k]\cap \N, n_5( \sigma^{N} \omega ) \leq k\}.$$
We now show that $n_6$ has a stretched exponential tail. Indeed, }
\begin{equation*}\begin{split}
\added[id=WB]{P\{ n_6(\omega) >n\} }&\leq \sum_{k>n} \sum_{N=1}^k P\{ n_5( \sigma^{N} \omega ) >k\}\\ 
&= \sum_{k>n} \sum_{N=1}^k  P\{ n_5(\omega  ) >k\} \le C' e^{-u'n^{v'}},
\end{split}
\end{equation*}
\added[id=WB]{for an appropriate choice of $u'>0$, $v'\in(0,1)$. 
Since $A(n)\sim \log n$, for any $\eps>0$ we have
$$ P\{n_6(\omega)>\frac{n}{A(n)}\}\le P\{n_6(\omega)>n^{1-\eps}\}\lesssim e^{u'n^{(1-\eps)v'}}.$$
Therefore, by Proposition \ref{prop:tail} and the definition of $A(n)$, for any $\frac{n}{A(n)}>n_6$ we can estimate \eqref{eq:ileAn} as follows:}
\begin{equation}\label{eq:goodsetT}
\begin{split}
\bl\{T_{i-1, \omega}\le n <T_{i, \omega}\} 
& \le C_{\bl}M^2C\sum_{j=0}^{i-1} \frac{(\log n )^b}{n^{a-1-\delta}}i^{a-1}=C_{\bl}M^2C\frac{(\log n )^b}{n^{a-1-\delta}}i^{a}\\
&\lesssim (\log n)^{a} \frac{(\log n )^b}{n^{a-1-\delta}} \lesssim \frac{(\log n )^{b+a}}{n^{a-1-\delta}}.
\end{split}
\end{equation}
Finally, using \eqref{eq:goodsetT}
\begin{equation}\label{eq:goodsetTT}
\sum_{i=1}^{A(n)}(1-\eps_1)^{i}\bl\{T_{i, \omega}\le n <T_{i+1, \omega}\}\lesssim \frac{(\log n )^{b+a}}{n^{a-1-\delta}} \sum_{i=1}^{\infty}(1-\eps_1)^{i}\lesssim \frac{(\log n )^{b+a}}{n^{a-1-\delta}}.
\end{equation}
\added[id=WB]{Thus, combining \eqref{eq:igeAn} and \eqref{eq:goodsetTT} finishes the proof.}
\end{proof}

\subsection{Decay of future correlations (Proof of Theorem \ref{thm:DC} item (i))}\label{sec:future_corr}
Let  $\psi\in \mF_\gamma^{K_\omega}$   and $\vp\in \mL_\infty^{K_\omega}$.  Also, let $C_\psi$, $C_\psi'$ and $C_\vp'$ be the constants given in the definitions of $\mF_\omega^{K_\omega}$ and  $\mL_\infty^{K_\omega}$ respectively. Let $\tilde\psi=A_\omega(\psi+(C_\psi' +1) K_\omega+1)$, where $A_\omega = (\int \psi dm +m(\Dom)[(C_\psi'+1)K_\omega+1])^{-1}$. Then  $\tilde\psi  \in \mF^+_\gamma\cap \mF_\gamma^{K_\omega}$,  $\int\tilde\psi dm=1$ and $|\tpsi(x)|\le 2(C_\psi'+1)$ for all $x\in \Dom$.  The second assertion is obvious by the choice of $A_\omega$. For the third one we use the inequality $A_\omega\added[id=WB]{\le m(\Dom)^{-1}(1+K_\omega)^{-1}\le 1}$. For the first claim we have 
$$
\left| \frac{\tilde\psi(x)}{\tilde\psi(y)}-1\right|  \le \frac{1}{K_\omega+1} \left| \psi(x) - \psi(y)\right|\le C_\psi \gamma^{s(x, y)}.
$$
For the correlations we have the following relation
\begin{equation}\label{eq:corr_relation}
C^{(f)}_{n, \omega}(\vp, \psi)= \frac{1}{A_\omega}C^{(f)}_{n, \omega}(\vp, \tpsi)-m(\Dom)[K_\omega(C'_\psi+1) +1]C^{(f)}_{n, \omega}(\vp, \frac{1}{m(\Dom)}).
\end{equation}

Let $\lambda$ be a probability with density  $\frac{d\lambda}{dm}=\tilde\psi$. 
 Then by Proposition \ref{rates}  for every $n>n_6$  we have 
\begin{equation}\label{eq:corr_tpsi}
\begin{aligned}
\left| C_{n, \omega}^{(f)}(\vp, \tpsi)\right|= &\left| \int (\vp_{\sigma^n\omega}\circ F^{n}_\omega)\tpsi_\omega dm -\int \vp_{\sigma^n\omega} d\nu_{\sigma^n\omega}\int \tpsi_\omega dm\right| 
\\&\le \left| \int(\varphi_{\sigma^n\omega}\circ F^{n}_\omega)d\lambda_\omega - \int \varphi_{\sigma^n\omega} 
d\nu_{\sigma^n\omega}\right|
\\& \le  \sup_{x\in \Delta_{\sigma^{n}\omega}}\vp_{\sigma^n\omega}(x)\cdot |(F_\omega^n)_*\lambda_\omega-(F_\omega^n)_*\nu_\omega |
\\& \le C_\vp' K_{\sigma^n\omega}C_{\lambda, \nu}\frac{(\log n)^{b\added[id=WB]{+a}}}{n^{a-1-\delta}}.
\end{aligned} \end{equation}

Similarly, for the probability measure $\lambda'$ with the constant density $m(\Dom)^{-1}$ we have 
\begin{equation}\label{eq:corr_1}
\begin{aligned}
\left|C_{n, \omega}^{(f)}(\vp, \frac{1}{m(\Dom)} )\right|\le C_\vp' K_{\sigma^n\omega}C_{\lambda', \nu}\frac{(\log n)^{b\added[id=WB]{+a}}}{n^{a-1-\delta}}.
\end{aligned} \end{equation}
Define $C_{\lambda, \lambda'}:=\max\{C_{\lambda, \nu}, C_{\lambda', \nu}\}$. Substituting \eqref{eq:corr_tpsi} and \eqref{eq:corr_1} into  \eqref{eq:corr_relation}, and using the inequality $A_\omega^{-1} \le m(\Dom)[1+(2C_\psi'+1))K_\omega]$ we have 
\begin{equation}
\left|C^{(f)}_{n, \omega}(\vp, \psi)\right| \le C_{\lambda, \lambda'} C_\vp' m(\Dom)[2+K_\omega(2+3C_\psi^{\prime})] K_{\sigma^n\omega} \frac{(\log n)^{b\added[id=WB]{+a}}}{n^{a-1-\delta}}.
\end{equation}

\added[id=WB]{Let  $n_7(\omega) =\inf\{ k \ge n_6(\omega)\mid \forall \ell >k,\ K_{\sigma^\ell\omega}\le \ell^{\delta}\}$. Then 
$$
P\{n_7 >n\} \le P\{n_6> n\}+ \sum_{k\ge n} P\{K_{\sigma^k\omega}>k^{\delta}\} \lesssim e^{-u'n^{v'}}+\sum_{k\ge n} e^{-uk^{v'}} \lesssim Ce^{-u'n^{v'}}.
$$
Now, if $n >n_7$ then }
$$
\left|C^{(f)}_{n, \omega}(\vp, \psi)\right|\le C_{\lambda, \lambda'} C_\vp' M[2+K_\omega(2+3C_\psi^{\prime})]\frac{(\log n)^{b\added[id=WB]{+a}}}{n^{a-1-2\delta}}.
$$
If $n\le n_7$ then we let 
$$
C_\omega=n_7(\omega)^{a}K_\omega\sup_{n\le n_7}K_{\sigma^n\omega}.
$$
Hence, for all $n\in \mathbb N$ we have obtained  
\begin{align*}
\left| \int (\varphi_{\sigma^n\omega}\circ F^{n}_\omega)\psi_\omega dm -\int \varphi_{\sigma^n\omega} d\nu_{\sigma^n\omega}\int \psi_\omega dm\right|  \le C_{\psi, \vp}C_\omega \frac{(\log n)^{b\added[id=WB]{+a}}}{n^{a-1-2\delta}}.
 \end{align*} 

It remains to show $P\{C_\omega> n\}$ has the desired decay rate.  We write 
\begin{align*}
P\{C_\omega> k\} \le P\{K_\omega> k^{1/3}\} + P\{\sup_{n\le n_7(\omega)}K_{\sigma^{n}\omega}>k^{1/3}\} 
+P\{n_7(\omega) > k^{1/(3a)}\}.
\end{align*}
\added[id=WB]{Notice that by the definition of $n_7$ we have 
$$
P\{\sup_{n\le n_7(\omega)}K_{\sigma^{n}\omega}>k^{1/3}\}\le P\{n_7(\omega) >k\}+\sum_{n=1}^k P\{K_{\sigma^n\omega}>k^{1/3}\}.
$$
Hence, we have 
$$
P\{C_\omega >k\}\lesssim  (k+1) e^{-uk^{v/3}}+ e^{-u'k^{\delta v}}+  e^{-u'k^{\delta v/(3a)}} \lesssim  e^{-u'k^{v'/(3a)}}.
$$ 
Then the conclusion of the theorem holds with $u'$  and $v':=\frac{v'}{3a}$. }

\subsection{Decay of past correlations} \label{se:past_corr}
To obtain decay of past correlations we need to prove the results of Sections  \ref{sec:decay1} and  \ref{sec:future_corr} with the corresponding shift on $\omega$. Below we use the notation  $\omega=\sigma^{-n}\omega^{\prime}$ for $\omega^{\prime}\in \Omega$. 
\begin{lemma}\label{lem:return_past} 
Let $\lambda_{\omega'}$ and $\lambda_{\omega'}'$ be two probability measures on $\{\Delta_{\omega^{\prime}}\}$ with densities $\vp, \vp'\in\mF_\gamma^+\cap \mL^{K_{\omega^\prime}}_\infty$. 
 Let $\tilde \lambda = \lambda_{\omega'} \times \lambda_{\omega'}^\prime$. For each $\omega^{\prime}\in\Omega$ let  $\omega=\sigma^{-n}\omega^{\prime}$. Then for any  $i\ge 2 $ and $\Gamma \in \xi_i^\omega,$ where   such that $T_\omega |_\Gamma > \tau^\omega_{i-1}$ we 
have 
$$
\tilde \lambda\{ T_{\omega} > \tau^\omega_i |  \Gamma\} \ge 1-C_{\tilde\lambda} V_{\sigma^{\tau^\omega_{i-1}}\omega}^{\tau^\omega_i - \tau^\omega_{i-1}}.
$$
where $0<C_{\bl}<1$. Dependence of $C_{\tilde\lambda}$ on $\bl$ on can be removed if we only consider $i\ge i_0(\bl)$.
\end{lemma}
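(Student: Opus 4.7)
The proof will closely parallel that of Lemma \ref{lem:return_est1}, differing only in that at the decisive step we invoke the \emph{upper} (rather than lower) density bound extracted from Lemma \ref{cbddd}(ii). Take $i$ even without loss of generality; the odd case is symmetric under the swap of $\pi$ and $\pi'$. By the construction of $\xi_i^\omega$, we decompose $\Gamma = \pi\Gamma \times \pi'\Gamma$ with $F_\omega^{\tau_{i-1}^\omega}(\pi\Gamma) = \Delta_{\sigma^{\tau_{i-1}^\omega}\omega, 0}$ and $F_\omega^{\tau_i^\omega}(\pi'\Gamma) = \Delta_{\sigma^{\tau_i^\omega}\omega, 0}$, each bijectively, so the second coordinate has already returned at time $\tau_i^\omega$ and the conditional event $\{T_\omega = \tau_i^\omega\}\cap\Gamma$ reduces to a statement about the first coordinate alone.

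Setting $\nu_{\sigma^{\tau_{i-1}^\omega}\omega} := (F_\omega^{\tau_{i-1}^\omega})_\ast(\lambda|_{\pi\Gamma})$, the same chain of equalities as in the proof of Lemma \ref{lem:return_est1} yields
$$
\tilde\lambda\{T_\omega = \tau_i^\omega \mid \Gamma\} = \nu_{\sigma^{\tau_{i-1}^\omega}\omega}\bigl(F_{\sigma^{\tau_{i-1}^\omega}\omega}^{\tau_i^\omega - \tau_{i-1}^\omega}\Delta_{\sigma^{\tau_i^\omega}\omega, 0} \cap \Delta_{\sigma^{\tau_{i-1}^\omega}\omega, 0}\bigr).
$$
Integrating the ratio bound in Lemma \ref{cbddd}(ii) against $\int d\nu_{\sigma^{\tau_{i-1}^\omega}\omega} = 1$ yields an upper density bound $d\nu_{\sigma^{\tau_{i-1}^\omega}\omega}/dm \le 1 + D + C_\lambda(1+D)$, parallel to (and in fact the arithmetic complement of) the lower bound used in Lemma \ref{lem:return_est1}. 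Consequently,
$$
\tilde\lambda\{T_\omega = \tau_i^\omega \mid \Gamma\} \le C_{\tilde\lambda}\, V_{\sigma^{\tau_{i-1}^\omega}\omega}^{\tau_i^\omega - \tau_{i-1}^\omega},
$$
with $C_{\tilde\lambda} := \max\{1+D+C_\lambda(1+D),\, 1+D+C_{\lambda'}(1+D)\}$. Since $T_\omega > \tau_{i-1}^\omega$ on $\Gamma$ and $\tau_i^\omega$ is the next candidate coupling time, the events $\{T_\omega = \tau_i^\omega\}$ and $\{T_\omega > \tau_i^\omega\}$ partition $\Gamma$ conditionally, so taking complements produces the claimed lower bound. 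Finally, Remark \ref{rem:indep_of_lambda} lets us replace $C_\lambda, C_{\lambda'}$ by a universal constant once $i \ge i_0(\bl)$, rendering $C_{\tilde\lambda}$ independent of $\bl$ for such $i$.

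The main subtlety to address is notational: $\lambda_{\omega'}$ is stated to live on $\Delta_{\omega'}$, whereas $\Gamma$ lies in $\Delta_\omega \times \Delta_\omega$ with $\omega = \sigma^{-n}\omega'$. The plan is to resolve this either by transporting $\lambda_{\omega'}$ back to the fiber $\omega$ along the inverse branches of $F_\omega^n$ (each element of $\vee_{j=0}^{n-1} F_\omega^{-j}\mP_{\sigma^j\omega}$ projecting bijectively onto a base), or by reinterpreting the measures directly on $\Delta_\omega$. Uniformity of the bounded-distortion constants in $\omega$ ensures that the regularity class $\mF_\gamma^+\cap \mL^{K}_\infty$ is preserved and that $C_\lambda, C_{\lambda'}$ remain controlled through this identification, so the argument above carries through essentially verbatim.
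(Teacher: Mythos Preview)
The paper does not supply a separate proof of this lemma; it merely lists it (alongside Lemma~\ref{lem:tailtau_too} and Propositions~\ref{prop:past_tail}--\ref{prop:past_rates}) as the past-correlation analogue of Lemma~\ref{lem:return_est1}, obtained by replacing $\omega$ with $\sigma^{-n}\omega'$ throughout. In that sense your strategy---rerun the argument of Lemma~\ref{lem:return_est1} with the shift---is exactly what the paper intends.

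There is, however, a genuine issue with the version you prove. Your argument produces $\tilde\lambda\{T_\omega=\tau_i^\omega\mid\Gamma\}\le C_{\tilde\lambda}\,V$ with $C_{\tilde\lambda}=\max\{1+D+C_\lambda(1+D),\,1+D+C_{\lambda'}(1+D)\}\ge 1+D>1$, directly contradicting the statement's stipulation $0<C_{\tilde\lambda}<1$. This is not a flaw in your reasoning but a symptom of the fact that the displayed inequality in the lemma is almost certainly misprinted: comparing with Lemma~\ref{lem:return_est1}, and with how the estimate is actually used downstream (in the product $\prod_j(1-C_{\tilde\lambda}V^{\tau_j-\tau_{j-1}})$ inside the proof of Proposition~\ref{prop:tail}), what is needed for past correlations is the \emph{lower} bound $\tilde\lambda\{T_\omega=\tau_i^\omega\mid\Gamma\}\ge C_{\tilde\lambda}\,V$ with the same $C_{\tilde\lambda}<1$ as before. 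That is obtained by repeating the proof of Lemma~\ref{lem:return_est1} verbatim with the shift, using the \emph{lower} density bound from Lemma~\ref{cbddd}(ii) rather than the upper one you invoked. Your closing discussion of the fibre mismatch ($\lambda_{\omega'}$ living on $\Delta_{\omega'}$ while $\Gamma\subset\Delta_\omega\times\Delta_\omega$) is the only genuinely new point, and the paper indeed leaves it implicit.
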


\begin{lemma}\label{lem:tailtau_too} Let $C_{\tilde\lambda}$ be as in Lemma \ref{lem:return_est1}.
For each $\omega'\in\Omega$, let $\omega=\sigma^{-n}\omega'$. For every   $i$ and $\Gamma \in \xi_i^\omega$ 
$$\tilde \lambda\{ \tau_{i+1}^\omega - \tau_i^\omega > \ell_0 + n | \Gamma\} 
\leq M_0 MC_{\tilde \lambda}^{-1} \cdot m\{ \hR_{\sigma^{\tom_i + \ell_0}\omega} > n \}.$$
\end{lemma}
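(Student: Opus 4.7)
The plan is to carry out exactly the same argument as in the proof of Lemma \ref{lem:tailtau}, observing that all of the ingredients are uniform in $\omega \in \Omega$, so that substituting $\omega = \sigma^{-n}\omega'$ for an arbitrary $\omega' \in \Omega$ does not affect any of the bounds. First I would handle the case when $i$ is even (the odd case being symmetric); since $\tau_i^\omega$ is constant on each $\Gamma \in \xi_i^\omega$, the projection $\pi$ onto the first coordinate gives
$$\pi\bigl(\{(x,x') : \hR_{\sigma^{\tau_i^\omega + \ell_0}\omega}\circ F_\omega^{\tau_i^\omega + \ell_0}(x) > n\}\cap \Gamma\bigr)
= \{x : \hR_{\sigma^{\tau_i^\omega + \ell_0}\omega}\circ F_\omega^{\tau_i^\omega + \ell_0}(x) > n\}\cap \pi\Gamma,$$
so that $\tilde\lambda\{\tau_{i+1}^\omega - \tau_i^\omega > \ell_0 + n \mid \Gamma\}$ reduces to the corresponding $\lambda_{\omega'}$-measure on $\pi\Gamma$.

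Next I would push $\lambda_{\omega'}|_{\pi\Gamma}$ forward by $F_\omega^{\tau_{i-1}^\omega}$ onto the base $\Delta_{\sigma^{\tau_{i-1}^\omega}\omega, 0}$, obtaining an absolutely continuous measure whose density satisfies the distortion estimate in item (ii) of Lemma \ref{cbddd}. Since the distortion constant in that lemma is independent of $\omega$, and since $\varphi \in \mathcal F_\gamma^+ \cap \mL^{K_{\omega'}}_\infty$, the resulting density is bounded above by $(1+D+C_\lambda(1+D))$ times Lebesgue on the base. Applying this bound produces
$$\tilde\lambda\{\tau_{i+1}^\omega - \tau_i^\omega > \ell_0 + n \mid \Gamma\}
\le (1+D+C_\lambda(1+D))\, m\{\hR_{\sigma^{\tau_i^\omega + \ell_0}\omega}\circ F_{\sigma^{\tau_{i-1}^\omega}\omega}^{\tau_i^\omega - \tau_{i-1}^\omega + \ell_0} > n\}.$$

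Finally, I would invoke item (i) of Lemma \ref{cbddd} to bound $\frac{d(F_{\sigma^{\tau_{i-1}^\omega}\omega}^{\tau_i^\omega - \tau_{i-1}^\omega + \ell_0})_\ast m}{dm} \le M_0 M$ uniformly, which yields
$$\tilde\lambda\{\tau_{i+1}^\omega - \tau_i^\omega > \ell_0 + n \mid \Gamma\}
\le M_0 M C_{\tilde\lambda}^{-1}\, m\{\hR_{\sigma^{\tau_i^\omega + \ell_0}\omega} > n\},$$
after collecting all distortion constants into $C_{\tilde\lambda}^{-1}$ as in the original lemma. The odd case is identical with the roles of the two coordinates swapped. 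The main (and really only) point is that nothing in the above chain of inequalities depends on the particular base point $\omega$: the Jacobian bound of Lemma \ref{cbddd}(i) and the density distortion bound of Lemma \ref{cbddd}(ii) are uniform in $\omega$, so replacing $\omega$ by $\sigma^{-n}\omega'$ is cosmetic. There is no real obstacle; the slight bookkeeping issue is to verify that the constant $C_{\tilde\lambda}$ obtained from Lemma \ref{lem:return_past} can be chosen independently of the shift $n$, which again follows from the uniformity in $\omega$ of Lemma \ref{cbddd}(ii) together with Remark \ref{rem:indep_of_lambda}.
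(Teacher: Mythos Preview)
Your proposal is correct and matches the paper's approach exactly: the paper states this lemma without a separate proof, relying on the fact that the argument of Lemma~\ref{lem:tailtau} goes through verbatim with $\omega$ replaced by $\sigma^{-n}\omega'$ since all distortion bounds from Lemma~\ref{cbddd} are uniform in $\omega$. Your write-up is in fact more explicit than what the paper provides.
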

\begin{proposition}\label{prop:past_tail} Let $\delta>0$ be given. Let $\lambda_{\omega'}$ and $\lambda_{\omega'}'$ be two probability measures on $\{\Delta_{\omega'}\}$ with densities $\vp, \vp'\in\mF_\gamma^+\cap \mF^{K_{\omega^\prime}}_\infty$. 
Then there exists a constant $\hat C_{\bl}$ and a subset $\Omega_5\subset \Omega$ full measure and a random variable  $n_5(\omega')$ which is finite on $\Omega_5$ such that for any 
$n > n_5$  letting $\omega'=\sigma^{-n}\omega$ we have 
$$ \bl \{ T_{\omega} > n \} \leq  \hat C_{\bl}\frac{(\log n)^b}{n^{a-1-\delta}}.$$
Moreover, there exist $u'>0, 0<v<1$ such that for any $n$ 
$$P\{\omega'\mid  n_5(\omega') > n \} \leq C e^{-u'n^v}.$$
\end{proposition}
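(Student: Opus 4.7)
The plan is to mirror the proof of Proposition \ref{prop:tail}, re-parameterizing all random objects by $\omega'$ rather than $\omega = \sigma^{-n}\omega'$. Since $P$ is $\sigma$-invariant, every tail estimate on a functional of $\omega$ translates directly into one on a functional of $\omega'$. The only delicate points are that (i) the auxiliary stopping times $\tau_i^\omega$ live on the fibre $\Delta_{\sigma^{-n}\omega'}$ whose coordinates lie to the \emph{left} of index zero in $\omega'$, and (ii) the random variable $n_5(\omega')$ must be measurable in $\omega'$ while simultaneously controlling $n_1$ at every shift $\sigma^{k}\omega'$ that appears in the argument.

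First I would repeat the decomposition
$$\bl\{T_\omega > n\} \le Y_1 + Y_2 = \sum_{i< \lfloor n^c \rfloor}\bl\{\tau_{i-1}^\omega \le n < \tau_i^\omega\} \;+\; \bl\{T_\omega > n,\; \tau_{\lfloor n^c \rfloor}^\omega \le n\}.$$
For $Y_1$, Lemma \ref{lem:tailtau_too} supplies the same pointwise bound as before, with the right-hand side involving $m\{\hR_{\sigma^{\tau_{j-1}^\omega + \ell_0}\omega} > \cdot\} = m\{\hR_{\sigma^{\tau_{j-1}^\omega + \ell_0 - n}\omega'} > \cdot\}$. To apply the uniform rate in (P4) I would set
$$n_4(\omega') := \inf\bigl\{m \;\bigm|\; \forall n > m,\, \forall N \in [-n-\ell_0,\, n+\ell_0]\cap \Z,\, \forall k > \lfloor n^{1-c}\rfloor - \ell_0,\; n_1(\sigma^{N-k}\omega') < k\bigr\},$$
whose complement's probability is dominated, via $\sigma$-invariance and a union bound, by a polynomial factor in $n$ times $e^{-un^{v}}$; hence $P\{n_4(\omega')>m\}\lesssim e^{-u''m^{v''}}$. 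The remainder of the $Y_1$ estimate (applying Lemma \ref{appendix1}) then runs unchanged and yields $Y_1 \lesssim (\log n)^b / n^{a-1-\delta}$.

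For $Y_2$, I would retain the $B_q(K,\rho)$ decomposition with $q=\lfloor n^c\rfloor$. The key observation making Lemma \ref{lem:geetau} applicable is that the independence of the factors $m\{\hR_{\sigma^{\tau_{j-1}+\ell_0}\omega} = k_j\}$ depends only on the relative separation of their coordinate supports, which is preserved under the global shift $\omega = \sigma^{-n}\omega'$; the $P$-integration yields the same bound $\p(B_q(K,\rho)) \le \kappa_1^q$. A Borel--Cantelli step in $\omega'$ then produces a full measure set $\Omega_5$ and $n_5(\omega')\ge n_4(\omega')$ with the required stretched-exponential tail, so that $m\times m(B_q^\omega(K,\rho)) \le \kappa_1^{q/2}$ for $n > n_5(\omega')$. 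On $[B_q^\omega(K,\rho)]^c$, the telescoping estimate driven by Lemma \ref{lem:return_past} contributes the factor $(1-C_{\bl}V(K))^{(1-\rho)q}$, and $Y_2$ decays stretched-exponentially in $n$.

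The main obstacle, as in the forward case, is ensuring that the enlargement needed to make $n_5$ an $\omega'$-measurable object (taking a union over all admissible shifts $N-k$) only costs a polynomial factor in the probability bound, which is then absorbed into the stretched exponential. Once $n_5(\omega')$ is in hand, the combination $Y_1 + Y_2 \le \hat C_{\bl}(\log n)^b / n^{a-1-\delta}$ follows for $n > n_5(\omega')$ exactly as in Proposition \ref{prop:tail}.
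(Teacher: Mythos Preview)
Your proposal is correct and matches the paper's intended approach: the paper does not give a separate proof of this proposition, but simply states that one should reprove the results of Section~\ref{sec:decay1} ``with the corresponding shift on $\omega$'' (writing $\omega=\sigma^{-n}\omega'$) and follow the same strategy as in Proposition~\ref{prop:tail}. Your sketch fills in exactly those details---the $Y_1+Y_2$ decomposition, the enlarged index range in the definition of $n_4(\omega')$ to accommodate the negative shifts, the observation that the independence structure in Lemma~\ref{lem:geetau} survives the global shift, and the Borel--Cantelli step for $n_5(\omega')$---all of which are what the paper leaves implicit.
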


\begin{proposition}\label{prop:past_rates}
For every  $\delta>0$ there is a full measure set $\Omega_6\subset \Omega$ and a random variable  $n_6(\omega')$, which is finite on $\Omega_6$ such that for all probability measures $ \lambda_{\omega'} , \lambda_{\omega'}'$  on $\{\Delta_{\omega'}\}$  with $\frac{d\lambda}{dm}, \frac{d\lambda'}{dm}\in \mathcal F_\gamma^{K_{\omega'}}\cap \mF_\gamma^{+}$, there is $C_{\lambda, \lambda'}$ so that  for any $n > n_6(\omega')$ letting $\omega'=\sigma^{-n}\omega$ we have 
$$|(F_{\omega}^n)_*(\lambda)-(F_{\omega}^n)_*(\lambda')|\ \leq C_{\lambda, \lambda'} \frac{(\log n)^{b\added[id=WB]{+a}}}{n^{a-1-\delta}}.$$ 
Moreover, there exist $C>0$, $u'>0$ and $0<v'<1$ such that  $$P\{ n_6 > n \} \leq C e^{-u'n^{v'}}.$$
\end{proposition}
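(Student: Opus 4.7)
The plan is to mirror the proof of Proposition \ref{rates}, substituting $\omega$ by $\sigma^{-n}\omega'$ throughout and replacing each forward-indexed auxiliary lemma by its past-indexed counterpart (Lemma \ref{lem:return_past}, Lemma \ref{lem:tailtau_too}, Proposition \ref{prop:past_tail}). Concretely, applying Lemma \ref{lem:main} with $\omega$ replaced by $\sigma^{-n}\omega'$ gives
\begin{equation*}
|(F_{\sigma^{-n}\omega'}^n)_*\lambda - (F_{\sigma^{-n}\omega'}^n)_*\lambda'| \le 2\,\bl\{T_{i_1,\sigma^{-n}\omega'}>n\} + 2\sum_{i=i_1}^{\infty}(1-\eps_1)^{i-i_1+1}\bl\{T_{i,\sigma^{-n}\omega'}\le n<T_{i+1,\sigma^{-n}\omega'}\}.
\end{equation*}
I would then split the series at the same threshold $A(n)$ (chosen so $(1-\eps_1)^{A(n)}\le n^{-2a}$) as in Proposition \ref{rates}. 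The tail $i\ge A(n)$ gives an $O(n^{-2a})$ contribution exactly as before.

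For $i\le A(n)$, the same argument as in \eqref{eq:ileAn}, combined with Lemma \ref{sublemma} applied on the past tower (which is unchanged, since the statement does not depend on the direction of the shift) yields
\begin{equation*}
\bl\{T_{i-1,\sigma^{-n}\omega'}\le n<T_{i,\sigma^{-n}\omega'}\} \lesssim C_{\bl}M^2\sum_{j=0}^{i-1}\bar m\{T_{\sigma^{T_{j,\sigma^{-n}\omega'}-n}\omega'}>n/i\}.
\end{equation*}
To invoke the uniform tail in Proposition \ref{prop:past_tail} on each of the inner terms, I would define
\begin{equation*}
n_6(\omega') := \inf\{k\ge 1 \mid \forall\, \ell\ge k,\ \forall\, N\in[1,\ell]\cap\N,\ n_5(\sigma^{-N}\omega')\le \ell\},
\end{equation*}
so that whenever $n/A(n)>n_6(\omega')$ the past tail bound of Proposition \ref{prop:past_tail} applies at every shifted base $\sigma^{T_{j,\sigma^{-n}\omega'}-n}\omega'=\sigma^{-N}\omega'$ with $N\in[1,n]$. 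Arguing as in \eqref{eq:goodsetT}--\eqref{eq:goodsetTT}, each term is bounded by $\frac{(\log n)^b}{n^{a-1-\delta}}i^{a}$, and summing against $(1-\eps_1)^i$ absorbs the polynomial loss in $i$ into an additional $(\log n)^a$ factor, giving the desired rate $(\log n)^{b+a}/n^{a-1-\delta}$.

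The main technical step is verifying that $n_6(\omega')$ has a stretched exponential tail. Here I use $\sigma$-invariance of $P$: for each $N$, $P\{n_5(\sigma^{-N}\omega')>\ell\}=P\{n_5(\omega')>\ell\}\le Ce^{-u\ell^v}$, so
\begin{equation*}
P\{n_6(\omega')>n\} \le \sum_{\ell\ge n}\sum_{N=1}^{\ell} P\{n_5(\sigma^{-N}\omega')>\ell\} \le \sum_{\ell\ge n}\ell\,Ce^{-u\ell^v} \le C'e^{-u'n^{v'}}
\end{equation*}
for appropriate $u'>0$ and $v'\in(0,1)$, exactly as in the forward proof. This is the only place where one must be careful to distinguish the past from the future setting; once this substitution is in place, the remaining combinatorics (splitting at $A(n)$, the geometric series in $(1-\eps_1)^i$, and the control of $C_{\lambda,\lambda'}$) are literally the same as in the proof of Proposition \ref{rates}, so I would not repeat them but simply refer back.
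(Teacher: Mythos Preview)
Your proposal is correct and follows exactly the approach the paper intends: the paper does not write out a separate proof of Proposition \ref{prop:past_rates} but simply states the past-indexed analogues (Lemma \ref{lem:return_past}, Lemma \ref{lem:tailtau_too}, Proposition \ref{prop:past_tail}) and remarks that the argument proceeds ``following the same strategy as in the proof of future correlations.'' Your write-up makes this strategy explicit, including the only genuinely new ingredient, namely redefining $n_6(\omega')$ with backward shifts $\sigma^{-N}\omega'$ and checking its stretched-exponential tail via $\sigma$-invariance of $P$; this is precisely the substitution the paper has in mind.
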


Using the above statements and following the same strategy as in the proof of  future correlations we conclude decay of past correlations. 

\section{Appendix}\label{sec:appendix} 
\subsection{Sub-polynomial tail estimates} 
\begin{lemma}\label{appendix1}
Let $a>1$ and $b>0$.  Then
$$
\sum_{k >n}\frac{(\log k)^b}{k^a} \sim  \frac{1}{a-1}\frac{(\log n)^b}{n^{a-1}}.
$$
\end{lemma}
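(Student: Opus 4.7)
The plan is to reduce the series to an integral via standard integral comparison, then evaluate the integral asymptotically by a single integration by parts.

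First I would note that the function $g(x) := (\log x)^b/x^a$ is eventually monotone decreasing (its derivative is $x^{-a-1}(\log x)^{b-1}(b - a\log x)$, which is negative for $x > e^{b/a}$). Hence for $n$ sufficiently large the integral comparison
\begin{equation*}
\sum_{k>n}\frac{(\log k)^b}{k^a} = \int_n^\infty \frac{(\log x)^b}{x^a}\,dx + O\!\left(\frac{(\log n)^b}{n^a}\right)
\end{equation*}
holds, with the error term absorbed into the remainder below.

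Second, I would integrate by parts with $u=(\log x)^b$ and $dv = x^{-a}dx$ to obtain
\begin{equation*}
\int_n^\infty \frac{(\log x)^b}{x^a}\,dx = \frac{(\log n)^b}{(a-1)n^{a-1}} + \frac{b}{a-1}\int_n^\infty \frac{(\log x)^{b-1}}{x^a}\,dx,
\end{equation*}
where the boundary term at infinity vanishes since $a>1$. The first summand is exactly the claimed leading asymptotic.

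Third, I would show the remainder integral is of strictly lower order than the main term. For any $\eps>0$ and $x \geq n$ with $n$ large we have $(\log x)^{b-1} \leq \eps (\log x)^{b}$ provided $\log n > 1/\eps$, which would make the argument circular; instead I would simply iterate the same integration-by-parts step (or, if $b<1$, apply it directly to the power $b-1$) to obtain
\begin{equation*}
\int_n^\infty \frac{(\log x)^{b-1}}{x^a}\,dx = \frac{(\log n)^{b-1}}{(a-1)n^{a-1}}\bigl(1+o(1)\bigr),
\end{equation*}
which is $o\!\bigl((\log n)^b/n^{a-1}\bigr)$ as $n\to\infty$. Combining with the previous display and the integral comparison yields the desired equivalence. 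There is no real obstacle here; the only minor point requiring care is justifying the $o(1)$ control on the remainder, which is handled either by a second application of integration by parts or by splitting the range $[n,\infty)$ at $n^2$ and bounding each piece trivially.
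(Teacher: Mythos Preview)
Your proposal is correct and follows essentially the same route as the paper: reduce the tail sum to the integral via the eventual monotonicity of $(\log x)^b/x^a$, then integrate by parts to extract the leading term $\frac{(\log n)^b}{(a-1)n^{a-1}}$ and show the remainder is of lower order. The paper makes the iteration explicit by integrating by parts $K=[b]+1$ times (after the substitution $y=\log x$) until the power of the logarithm becomes negative, whereas you perform one step and defer the remainder to a repetition of the same argument; these are the same idea.
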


\begin{proof}
The proof is based on integration by parts.  Since $\frac{(\log x)^b}{x^a}$ is monotonically decreasing on $(C, +\infty)$ for $C$ big enough, we have 
$$
\sum_{k >n}\frac{(\log k)^b}{k^a} \le \int_{n}^{\infty}\frac{(\log x)^b}{x^a}.
$$
Let $K=[b]+1$.  Then first making change of variables $y=\log x$ and the integrating by parts $K$ times we obtain 
\begin{equation}\label{integral}
 \int_{n}^{\infty} \frac{(\log x)^b}{x^a}=\frac{1}{a-1}(\log n)^bn^{1-a}+\sum_{i=2}^{K-1}(a-1)^{-i}\prod_{j=0}^{i-1}(b-j)   + I_k(a, b),
\end{equation}
where $I_k(a, b)=(a-1)^{-K}\prod_{j=0}^{K-1}(b-j) \int_{\log n}^\infty y^{b-K}e^{(1-a)y}dy$.
Since $b-K<0$  we conclude that  $I_ k(a)\le (a-1)^{-K-1}\prod_{j=0}^{K-1}(a-j) n^{1-a}$, 
This shows that  the dominant term in \eqref{integral} is $\frac{1}{a-1}(\log n)^bn^{1-a}$.
\end{proof}

\begin{lemma}\label{appendix2}
Suppose $a >0$ and  $a_k \sim \frac{1}{(\log k)^a}$. Then 
$\sum_{k=2}^n a_k \sim \frac{n}{(\log n)^a}.$
\end{lemma}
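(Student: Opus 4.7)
The plan is to apply the Stolz--Cesàro theorem to the ratio $\sum_{k=2}^n a_k / b_n$ where $b_n := n/(\log n)^a$. Since $b_n$ is eventually strictly increasing and tends to $\infty$, Stolz--Cesàro reduces the claim to computing
\begin{equation*}
\lim_{n \to \infty} \frac{a_{n+1}}{b_{n+1} - b_n}.
\end{equation*}
Given the hypothesis $a_{n+1} \sim 1/(\log n)^a$, it suffices to show that the denominator also satisfies $b_{n+1} - b_n \sim 1/(\log n)^a$.

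The bulk of the work is this asymptotic for $b_{n+1} - b_n$. Writing
\begin{equation*}
b_{n+1} - b_n = \frac{(n+1)(\log n)^a - n(\log (n+1))^a}{(\log n)^a (\log(n+1))^a},
\end{equation*}
I would expand the numerator as
\begin{equation*}
(\log n)^a + n\bigl[(\log n)^a - (\log(n+1))^a\bigr].
\end{equation*}
By the mean value theorem applied to $x \mapsto (\log x)^a$ on $[n, n+1]$, the bracketed quantity equals $-a(\log \xi_n)^{a-1}/\xi_n$ for some $\xi_n \in (n,n+1)$, so the second term is $O((\log n)^{a-1})$ and is dominated by $(\log n)^a$. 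Hence the numerator is $(\log n)^a (1 + o(1))$, and since $(\log(n+1))^a \sim (\log n)^a$, I conclude $b_{n+1} - b_n \sim 1/(\log n)^a$, as required.

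Combining these two asymptotics gives $a_{n+1}/(b_{n+1} - b_n) \to 1$, so Stolz--Cesàro yields $\sum_{k=2}^n a_k \sim b_n = n/(\log n)^a$. The only technical point is the expansion of $b_{n+1}-b_n$; an alternative approach would be to compare the sum to the integral $\int_2^n (\log x)^{-a}\,dx$ and then estimate the integral by integration by parts (or L'Hôpital on the ratio $\int_2^n(\log x)^{-a}dx\,/\,(n/(\log n)^a)$), but the Stolz--Cesàro route avoids handling the error between sum and integral and is the shortest path.
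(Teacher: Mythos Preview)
Your proof is correct and takes a genuinely different route from the paper. The paper first reduces to the case $a_k = (\log k)^{-a}$, then compares the sum to $\int_2^n (\log x)^{-a}\,dx$, integrates by parts to isolate the main term $n/(\log n)^a$, and finally shows the residual integral $\int_2^n (\log x)^{-a-1}\,dx$ is $o\bigl(n/(\log n)^a\bigr)$ by splitting at $\sqrt{n}$. Your Stolz--Ces\`aro argument bypasses both the sum-to-integral comparison and the integration by parts: everything is reduced to the single discrete estimate $b_{n+1}-b_n \sim (\log n)^{-a}$, which you obtain cleanly via the mean value theorem. This is shorter and arguably more transparent. The paper's approach, on the other hand, makes the connection to the logarithmic integral explicit and would more readily give finer error terms if those were needed; it is also the approach you yourself flag as the natural alternative at the end.
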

\begin{proof} A straightforward estimate, using the fact that $\sum_{k=2}^n \frac{1}{(\log k)^a} \rightarrow \infty$ shows that $\sum_{k=2}^n a_k \rightarrow \infty$ and 
$\sum_{k=2}^n a_k \sim \sum_{k=2}^n \frac{1}{(\log k)^a}$.
We work with the latter sum. An elementary estimate shows
$$ \int_2^{n+1} \frac{dx}{(\log x)^a} \leq \sum_{k=2}^n \frac{1}{(\log k)^a} \leq  \frac{1}{(\log 2)^a} 
+ \int_2^n \frac{dx}{(\log x)^a}.$$
Therefore $\sum_{k=2}^n  \frac{1}{(\log k)^a} \sim \int_2^n  \frac{dx}{(\log x)^a}$.  We now 
estimate the integral. 
\begin{equation}\begin{split}
\int_2^n \frac{dx}{(\log x)^a} &= 
\frac{n}{(\log n)^a} - \frac{2}{(\log 2)^a} + a \int_{2}^{ n} \frac{dx}{(\log x)^{a+1}},\\
\end{split}\end{equation}
using integration by parts. 
The first term above is the claimed rate, and the second term is clearly 
$o\left(\frac{n}{(\log n)^a}\right)$.  We will show the same is true for the third, integral term. 
We first upper-bound as follows:
$$ a \frac{(\log n)^a}{n}\int_{2}^{n} \frac{dx}{(\log x)^{a+1}} \leq 
 \frac{a}{n}\int_{2}^{n} \frac{dx}{\log x}.$$
Now simply estimate the right hand side by
\begin{equation*} \begin{split}
 \frac{a}{n}\int_{2}^{n} \frac{dx}{\log x} &=  \frac{a}{n}\int_{2}^{\sqrt n} \frac{dx}{\log x}
+  \frac{a}{n}\int_{\sqrt n}^{ n} \frac{dx}{\log x}\\
&\leq \frac{a}{\log 2}\frac{ \sqrt{n-2}}{n} + \frac{a}{\log \sqrt n }\frac{n -\sqrt n}{n}.\\
\end{split}
\end{equation*}
Since both terms are $o(1)$ in $n$ we are done. 
\end{proof}

\bibliographystyle{amsplain}


\end{document}